\newtheorem{thm}{Theorem}[section]
\newtheorem{corr}[thm]{Corollary}
\newtheorem{lem}[thm]{Lemma}
\newtheorem{prop}[thm]{Proposition}
\theoremstyle{definition}
\newtheorem*{ack}{Acknowledgment}
\numberwithin{equation}{section}
\def\R{\mathbb R}
\def\SS{\mathbb S}
\def\pt{\partial}
\begin{document}
\title[Fully nonlinear anisotropic curvature flow]{A fully nonlinear locally constrained anisotropic curvature flow}
\author[Y. Wei]{Yong Wei}
\address{School of Mathematical Sciences, University of Science and Technology of China, Hefei 230026, P.R. China}
\email{\href{mailto:yongwei@ustc.edu.cn}{yongwei@ustc.edu.cn}}

\author[C. Xiong]{Changwei~Xiong}
\address{College of Mathematics, Sichuan University, Chengdu 610065,  P.R. China}
\email{\href{mailto:changwei.xiong@scu.edu.cn}{changwei.xiong@scu.edu.cn}}

\subjclass[2010]{{53C44}, {53C21}}
\keywords{Anisotropic curvature flow; Wulff shape; Alexandrov--Fenchel inequalities}


\maketitle

\begin{abstract}
Given a smooth positive function $F\in C^{\infty}(\mathbb{S}^n)$ such that the square of its positive $1$-homogeneous extension on $\mathbb{R}^{n+1}\setminus \{0\}$ is uniformly convex, the Wulff shape $W_F$ is a smooth uniformly convex body in the Euclidean space $\mathbb{R}^{n+1}$ with $F$ being the support function of the boundary $\partial W_F$. In this paper, we introduce the fully nonlinear locally constrained anisotropic curvature flow
\begin{equation*}
\frac{\partial }{\partial t}X=(1-E_k^{1/k}\sigma_F)\nu_F,\quad k=2,\cdots,n
\end{equation*}
in the Euclidean space, where $E_k$ denotes the normalized  $k$th anisotropic mean curvature with respect to the Wulff shape $W_F$, $\sigma_F$ the anisotropic support function and $\nu_F$ the outward anisotropic unit normal of the evolving hypersurface. We show that starting from a smooth, closed and strictly convex hypersurface in $\mathbb{R}^{n+1}$ ($n\geq 2$), the smooth solution of the flow exists for all positive time and converges smoothly and exponentially to a scaled Wulff shape.   A nice feature of this flow is that it improves a certain isoperimetric ratio. Therefore by the smooth convergence of the above flow, we provide a new proof of a class of the Alexandrov--Fenchel inequalities for anisotropic mixed volumes of smooth convex domains in the Euclidean space.
\end{abstract}

\tableofcontents

\section{Introduction}


Given a smooth positive function $F\in C^{\infty}(\mathbb{S}^n)$ such that the square of its positive $1$-homogeneous extension on $\mathbb{R}^{n+1}\setminus \{0\}$ is uniformly convex, the Wulff shape $W_F$ is defined as the set
\begin{equation*}
  W_F=\bigcap_{\nu\in \mathbb{S}^n}\left\{x\in \mathbb{R}^{n+1}: x\cdot\nu\leq F(\nu)\right\},
\end{equation*}
which uniquely solves the isoperimetric problem for the interface energy functional (or called the anisotropic perimeter)
\begin{equation*}
  \mathcal{E}(K)=\int_{\partial^*K}F(\nu)d\mathcal{H}^n
\end{equation*}
among sets of finite perimeter and of given volume. Here $\partial^*K$ denotes the reduced boundary of $K$ and $\nu$ denotes the outward unit normal vector field.  See \cite{BM94,FM91,Tay78} and \cite{FMP2010}. The function $F$ on $\SS^n$ is called the support function of the Wulff shape $W_F$. The assumption on $F$ ensures that $W_F$ is a smooth uniformly convex body in $\mathbb{R}^{n+1}$, and its boundary $\Sigma_F=\partial W_F$ is also called the Wulff shape determined by the support function $F$. When $F\equiv 1$ on $\mathbb{S}^n$, the Wulff shape is just the unit round sphere in $\mathbb{R}^{n+1}$.

The Wulff shape plays the similar role in the anisotropic geometry (also called the relative or Minkowski differential geometry) as the one the unit sphere plays in the Euclidean geometry; see \cite{BF87} and references therein.  In fact, the Wulff shape allows us to define the anisotropic normal of a smooth hypersurface $M^n$ in $\mathbb{R}^{n+1}$ by a map $\nu_{F}:\ M\to \Sigma_F$, which takes each point $x\in M$ to the point in $\Sigma_F$ with the same oriented tangent plane.  The anisotropic Weingarten map ${\mathcal W}_{F}$ is the derivative of the anisotropic normal $\nu_{F}$, which is a linear map from $T_xM$ to itself at each point.  The eigenvalues $\kappa=(\kappa_1,\cdots,\kappa_n)$ of ${\mathcal W}_{F}$ are called the anisotropic principal curvatures. We define the $k$th anisotropic mean curvature of $M$ as the normalized $k$th elementary symmetric function $E_k$ of the anisotropic principal curvatures $\kappa$:
 \begin{equation*}
   {E}_{k}=E_k(\kappa)={\binom{n}{k}}^{-1}\sum_{1\leq i_1<\cdots<i_k\leq n}\kappa_{i_1}\cdots \kappa_{i_k},\quad k=1,\cdots,n.
 \end{equation*}
In particular, $H_F=nE_1(\kappa)$ denotes the anisotropic mean curvature of $M$. On the other hand, we can define the anisotropic support function $\sigma_F:M\rightarrow \R $ by the decomposition
\begin{equation*}
X=\sigma_F \nu_F+V^i\pt_iX, \quad V^i\pt_iX\in T_xM
\end{equation*}
of the position vector $X$. If we denote by $\nu$ the outward unit normal of $M$ in the Euclidean geometry, the anisotropic support function $\sigma_F$ is related to the isotropic support function $\sigma=\langle X,\nu\rangle$ by the equation $\sigma_F=\sigma/{F}(\nu)$.

In \cite{WeiX20}, we introduced a volume-preserving anisotropic mean curvature flow $X:M\times [0,T)\to \mathbb{R}^{n+1}, n\geq 2$ of star-shaped hypersurfaces in $\mathbb{R}^{n+1}$ which satisfies
\begin{equation}\label{flow0}
\begin{cases}
\dfrac{\pt }{\pt t}X=(n-H_F\sigma_F)\nu_F,\\
X(\cdot,0)= X_0(\cdot).
\end{cases}
\end{equation}
This flow was inspired by the isotropic one introduced earlier by Guan and Li \cite{GL15}. Denote by $M_t=X(M,t)$ the evolving hypersurface and by $\Omega_t$ the domain enclosed by $M_t$. By the anisotropic Minkowski identity \eqref{eq2.Minkowski}, the volume of $\Omega_t$ along the flow \eqref{flow0} remains a constant
\begin{equation*}
  \frac d{dt}|\Omega_t|~=~\int_{M_t}(n-H_F\sigma_F)d\mu_F=0,
\end{equation*}
where $d\mu_F=F(\nu)d\mu$ is the anisotropic area form on $M$ determined by the anisotropy $F$. We proved that the flow \eqref{flow0} starting from any star-shaped and closed hypersurface exists for all time $t\in[0,\infty)$ and converges smoothly to a scaled Wulff shape. As an application, we provided a new proof of a class of the Alexandrov--Fenchel inequalities for smooth convex domains in the Euclidean space.

The flow \eqref{flow0} is quasi-linear as the speed involves only the anisotropic mean curvature and the anisotropic support function. In this paper, we introduce a fully nonlinear locally constrained anisotropic curvature flow:
\begin{equation}\label{flow-1}
\frac{\pt }{\pt t}X=(1-E_k^{1/k}(\kappa)\sigma_F)\nu_F,\quad k=2,\cdots,n.
\end{equation}
When $k=1$, the flow \eqref{flow-1} reduces to the flow \eqref{flow0} up to a constant $n$ which is volume-preserving. However, for $k\geq 2$ the flow \eqref{flow-1} does not preserve the enclosed volume or any mixed volume generally. Recall that the mixed volumes of a smooth convex domain $\Omega$ relative to the Wulff shape $W_F$ can be expressed as (see \S \ref{sec:2-AF})
\begin{equation}\label{s1.Vk}
  V_{n-k}(\Omega,W_F)=\int_{M} E_k(\kappa)d\mu_F, \quad k=0,\cdots, n-1,
\end{equation}
and $V_{n+1}(\Omega,W_F)=(n+1)|\Omega|$, $V_0(\Omega,W_F)=(n+1)|W_F|$.  Define the following higher order anisotropic isoperimetric ratio
\begin{equation}\label{s1.Ik}
  \mathcal{I}_k(\Omega,W_F)=\frac{V_{n+2-k}(\Omega, W_F)}{V_{n+1}(\Omega,W_F)^{\frac{n+2-k}{n+1}}},\quad k=2,\cdots,n.
\end{equation}
We show that $\mathcal{I}_k(\Omega_t,W_F)$  is monotone non-increasing along the flow \eqref{flow-1}.   This is a motivation for us to introduce the flow \eqref{flow-1}.

Our main result is as follows.

\begin{thm}\label{s1-thm1}
Let $\Sigma_F \subset \mathbb{R}^{n+1}, n\geq 2,$ be a given smooth, closed and strictly convex hypersurface enclosing the origin with the support function $F\in C^\infty(\mathbb{S}^n)$. Let $M_0=\partial\Omega_0\subset \mathbb{R}^{n+1}$ be a smooth, closed and strictly convex hypersurface with the origin contained in $\Omega_0$. Then the flow \eqref{flow-1} starting from $M_0$ has a unique smooth solution for all time $t\in [0,\infty)$ and the evolving hypersurface $M_t$ converges smoothly and exponentially to $\bar{r}\Sigma_F$ for some $\bar{r}>0$ as $t\to\infty$.
\end{thm}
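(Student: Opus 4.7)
The plan is to reduce \eqref{flow-1} to a scalar fully nonlinear parabolic equation on the Wulff shape $\Sigma_F$, establish a priori estimates (preservation of strict convexity and two-sided curvature bounds in particular), and then combine the monotonicity of $\mathcal{I}_k$ with an anisotropic Minkowski-type inequality to extract smooth exponential convergence.

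Since $M_0$ is strictly convex and encloses the origin, I would parametrize $M_t$ via its anisotropic Gauss map $\nu_F\colon M_t\to\Sigma_F$ and use the anisotropic support function $s=\sigma_F\circ\nu_F^{-1}$ on $\Sigma_F$ as the unknown. In this parametrization the anisotropic principal radii are the eigenvalues of a matrix of the form $\bar\nabla^2 s + s\,\bar g$ in a suitable anisotropic connection on $\Sigma_F$, so \eqref{flow-1} becomes a scalar equation $\partial_t s = 1 - E_k^{1/k}(\kappa)\,s$ of the type to which Krylov--Evans theory applies once uniform parabolicity is known. The $C^0$ bound is standard: at a spatial maximum of $s$ one has $\kappa_i\le 1/s$, so $E_k^{1/k}(\kappa)\,s\le 1$, while the reverse holds at the minimum; comparing against scaled Wulff shapes then gives uniform two-sided bounds on $s$, placing $M_t$ in a fixed annulus.

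The technical heart is preservation of strict convexity and the upper curvature bound. I would apply the tensor maximum principle to the largest anisotropic principal curvature (equivalently the smallest anisotropic radius of curvature), using the concavity of $E_k^{1/k}$ on $\Gamma_+^n$ to sign the quadratic gradient terms, and exploit that the driving speed $1-E_k^{1/k}\sigma_F$ vanishes on Wulff shapes to produce favorable zero-order contributions. For the upper curvature bound one studies the evolution of $E_k^{1/k}$ together with an auxiliary term built from $\sigma_F$ and closes the estimate using the $C^0$ and convexity bounds. Uniform parabolicity then allows invocation of Krylov--Safonov and Evans--Krylov, and parabolic bootstrap yields uniform $C^\infty$ bounds and long-time existence on $[0,\infty)$.

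For convergence, monotonicity of $\mathcal{I}_k$ together with the $C^0$ bounds gives $\frac{d}{dt}\mathcal{I}_k\in L^1([0,\infty))$. Unpacking the variation formula identifies the deficit with the gap in an anisotropic Newton--MacLaurin/Minkowski inequality, and subsequential smooth compactness plus the rigidity case of the corresponding anisotropic Alexandrov--Fenchel inequality forces $M_t\to\bar r\Sigma_F$ along a subsequence for some $\bar r>0$. To upgrade this to full smooth exponential convergence, I would linearize \eqref{flow-1} at $\bar r\Sigma_F$: the linearization is a self-adjoint elliptic operator on $\Sigma_F$ whose trivial kernel (translations and the dilation direction) can be handled by a center-of-mass and scale normalization, leaving a spectral gap. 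Parabolic perturbation theory then yields exponential decay in a low Sobolev norm, which by interpolation against the uniform $C^k$ bounds promotes to $C^\infty$ exponential convergence. The main obstacles I anticipate are preservation of strict convexity under the fully nonlinear speed, since $E_k^{1/k}$ is smooth and concave only on $\Gamma_+^n$, and controlling the translation and scaling modes at the linearized level, since unlike \eqref{flow0} this flow has no obvious conserved quantity to pin down the limit.
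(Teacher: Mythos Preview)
Your overall architecture matches the paper's: reduce to a scalar equation via the anisotropic Gauss map, get $C^0$ by comparison with scaled Wulff shapes, bound $E_k^{1/k}$ from above by its own evolution equation, then close the $C^2$ estimate, bootstrap, and extract convergence from the monotonicity of $\mathcal{I}_k$. Two steps, however, are underspecified in ways that matter.

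\textbf{Preservation of strict convexity.} You propose to bound the largest anisotropic principal curvature via the tensor maximum principle and the concavity of $E_k^{1/k}$; but the delicate step is the \emph{lower} bound on $\kappa_i$, i.e.\ the upper bound on the largest radius $\tau_1$. In the Gauss-map equation $\partial_t s = 1 - s/\Phi(\tau)$ with $\Phi=(E_n/E_{n-k})^{1/k}$, the naive maximum principle applied to $\tau_{11}$ leaves a term of the form $C\,s\Phi^{-2}\bigl(\sum_j\dot\phi^j\bigr)\tau_{11}$, and $\sum_j\dot\phi^j$ is \emph{not} a priori bounded near $\partial\Gamma_+$. The paper resolves this with the auxiliary function $\log\tau_{11}-\alpha s+\beta|\bar\nabla s|^2$: the $\beta|\bar\nabla s|^2$ term supplies a good $-2\beta\sum_j\dot\phi^j\tau_j^2$, while the $-\alpha s$ term supplies $-\alpha s(\sum_j\dot\phi^j-1)$. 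To make these terms absorb the bad one requires the dichotomy of Guan--Shi--Sui (Lemma~2.2 of \cite{Guanbo15}): either the normal to the level set of $\phi$ at $\tau$ is far from the diagonal direction, in which case $\sum_j\dot\phi^j-1\ge\theta(\sum_j\dot\phi^j+1)$, or it is close, in which case all $\dot\phi^j$ are comparable. Concavity alone does not close this estimate.

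\textbf{Exponential convergence and the kernel.} The linearization at $\bar r\Sigma_F$ is $\partial_t h=\tfrac{1}{n\bar r}\mathcal{L}h$ with $\mathcal{L}h=\bar\Delta h-\tfrac12\bar g^{ij}Q_{ijk}\bar\nabla_k h$, whose kernel consists only of constants (the dilation mode); translations are \emph{not} in the kernel here, since $\sigma_F$ is measured from the origin and translated Wulff shapes are not stationary. So there is no translation mode to normalize away, but there is also---as you note---no conserved quantity to pin down $\bar r$, so a ``scale normalization'' does not kill the zero mode. The paper's device is different: from the monotonicity $V_{n+1}(\Omega_t,W_F)\nearrow$ and $V_{n+2-k}(\Omega_t,W_F)\searrow$, both converging to the same value $\bar r^{\,\bullet}|\Sigma_F|_F$, one expands each as $\mathrm{const}\cdot\int_{\Sigma_F}\sigma\,d\mu_F + O(\|\sigma\|_{C^2}^2)$ with $\sigma=s-\bar r$, and the two opposite signs force $\bigl(\int\sigma\bigr)^2\le C\bigl(\int|\sigma|_{C^2}^2\bigr)^2$. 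This makes the projection of $\sigma$ onto constants decay at a strictly faster rate than $\|\sigma\|_{L^2}$, after which the spectral gap $\lambda_1>0$ and Gagliardo--Nirenberg interpolation give the exponential rate. Your linearization idea is right, but the mechanism that suppresses the zero mode is this double-monotonicity sandwich, not a normalization.
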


Since the initial hypersurface $M_0$ is strictly convex and encloses the origin, we know that $M_0$ is star-shaped with respect to the origin. This is equivalent to that the support function $\sigma=\langle X,\nu\rangle $ is positive everywhere on $M_0$, where $X$ and $\nu$ denote the position vector and the outward unit normal of $M_0$ respectively. It follows that the anisotropic support function $\sigma_F=\sigma/{F(\nu)}$ is positive on $M_0$ as well. The proof of Theorem \ref{s1-thm1} consists of the following steps. First, we apply the maximum principle to the evolution equation of $\sigma_F$ to show that $\sigma_F$ is bounded from below by a positive constant along the flow \eqref{flow-1}, which means that the evolving hypersurface $M_t$ remains to be star-shaped with respect to the origin for positive time $t>0$. Then we can write $M_t$ as a graph of the radial function $\rho$ over the sphere $\mathbb{S}^n$, and the flow \eqref{flow-1} is equivalent to a fully nonlinear parabolic equation of the function $\gamma=\log \rho $ on $\mathbb{S}^n$. The short time existence of the flow \eqref{flow-1} follows immediately. To derive the a priori estimates of the solution $M_t$, we first apply the comparison principle to the equation \eqref{flow-1} to obtain the $C^0$ estimate.  Then the uniform positive lower bound of the anisotropic support function $\sigma_F$ together with the $C^0$ estimate implies the a priori $C^1$ estimate for the solution.

Since the flow \eqref{flow-1} is fully nonlinear, we still need the $C^2$ estimate in order to get the higher order regularity estimate. For this purpose, we employ the anisotropic Gauss map parametrization which we will review in \S \ref{sec:Gauss}. As we already have the $C^1$ estimate, the $C^2$ estimate is equivalent to the bounds on the anisotropic principal curvatures. Equivalently, it suffices to estimate the upper bound on the anisotropic principal radii of curvature  $\tau_1,\cdots,\tau_n$, which are the eigenvalues of the following $(0,2)$-tensor on the Wulff shape $\Sigma_F$,
\begin{equation}\label{s1:tau-def}
  \tau_{ij}[s]=\bar{\nabla}_i\bar{\nabla}_js+\bar{g}_{ij}s-\frac 12Q_{ijk}\bar{\nabla}_ks.
\end{equation}
Here $s$ is the anisotropic support function considered as a function on the Wulff shape $\Sigma_F$ which coincides with $\sigma_F$ introduced above. See Section \ref{sec:Gauss} for the detail. Section \ref{sec:C2} is devoted to proving the upper bound on $\tau_i$. The method is to apply the maximum principle to the evolution equation of $\tau_{ij}[s]$ together with an auxiliary function involving the anisotropic support function $s$ and its gradient. This is the most technical part in the proof, where an observation on smooth symmetric functions due to Guan, Shi and Sui \cite{Guanbo15} will be used.

Once the $C^2$ estimate is obtained, the higher order regularity estimate of the solution follows from the standard parabolic theory. This implies the long time existence of the smooth solution. The smooth convergence to a scaled Wulff shape follows from the improving of the isoperimetric ratio $ \mathcal{I}_k(\Omega,W_F)$. In \cite{GL15}, the exponential convergence of the flow \eqref{flow0} is based on the exponential decay of gradient $|\nabla^{\mathbb{S}}\gamma|$ of the radial function, which is an immediate consequence of the evolution of $|\nabla^{\mathbb{S}}\gamma|$ and the long time existence of the flow \eqref{flow0}. In the anisotropic case, the evolution of $|\nabla^{\mathbb{S}}\gamma|$ does not behave well. Instead, we will prove the exponential convergence by studying the linearization of the flow \eqref{flow-1} around the Wulff shape. The monotonicity of the volume $V_{n+1}(\Omega, W_F)$ and mixed volume $V_{n+2-k}(\Omega, W_F)$ will be used crucially to derive a faster decay of the average integral of anisotropic support function. With the help of a special class of Gagliardo-Nirenberg interpolation inequalities, we obtain the exponential convergence of the flow with the decay rate determined by the first non-zero eigenvalue of the self-adjoint operator \eqref{s9.L} on the Wulff shape.

As an application of Theorem \ref{s1-thm1}, we provide a new proof of the following Alexandrov--Fenchel inequalities.
\begin{corr}\label{s1-cor1}
Let $\Sigma_F=\partial W_F\subset \mathbb{R}^{n+1}, n\geq 2,$ be a smooth, closed and strictly convex hypersurface enclosing the origin with the support function $F\in C^\infty(\mathbb{S}^n)$. For any smooth, closed and strictly convex hypersurface $M$ which encloses a bounded domain $\Omega$, we have
\begin{equation}\label{s1:AF}
  \int_{M}E_k(\kappa)d\mu_F~\geq~(n+1)|\Omega|^{\frac {n-k}{n+1}}|W_F|^{\frac{k+1}{n+1}},\quad k=0,\dots,n-2,
\end{equation}
with the equality if and only if $M$ is a scaled Wulff shape. Here $|\Omega|$ and $|W_F|$ are the volume of  $\Omega$ and the Wulff shape $W_F$ respectively, $E_k(\kappa)$ denotes the $k$th normalized anisotropic mean curvature of $M$, and $d\mu_F$ the anisotropic area element.
\end{corr}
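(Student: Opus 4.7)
The plan is to obtain, for each $k \in \{0, \ldots, n-2\}$ separately, the inequality \eqref{s1:AF} by running the flow \eqref{flow-1} with the index parameter taken to be $k+2 \in \{2, \ldots, n\}$, and to extract \eqref{s1:AF} from the monotone behavior of the isoperimetric ratio $\mathcal{I}_{k+2}$ along that flow. Every quantity in \eqref{s1:AF} is translation invariant, so after a translation I may assume the origin lies in the interior of $\Omega$; then $M_0 = \partial\Omega$ satisfies the hypothesis of Theorem \ref{s1-thm1}, which supplies a smooth solution $\{M_t = \partial\Omega_t\}_{t \geq 0}$ converging smoothly to some $\bar{r}\Sigma_F$ as $t \to \infty$.

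The central step is the monotonicity of $\mathcal{I}_{k+2}(\Omega_t, W_F)$. Writing $f = 1 - E_{k+2}^{1/(k+2)}\sigma_F$ and using the standard variational formulas for the mixed volumes under $\partial_t X = f\nu_F$,
\begin{equation*}
\frac{d}{dt}V_{n+1}(\Omega_t, W_F) = (n+1)\int_{M_t} f\, d\mu_F, \quad \frac{d}{dt}V_{n-k}(\Omega_t, W_F) = (n-k)\int_{M_t} f E_{k+1}\, d\mu_F.
\end{equation*}
I would then use the anisotropic Minkowski identity \eqref{eq2.Minkowski} to substitute $\int d\mu_F = \int \sigma_F E_1\, d\mu_F$ and $\int E_{k+1}\, d\mu_F = \int \sigma_F E_{k+2}\, d\mu_F$, and apply the Newton-Maclaurin inequalities $E_1 \geq E_{k+2}^{1/(k+2)}$ and $E_{k+1} E_{k+2}^{1/(k+2)} \geq E_{k+2}$. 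Combined with $\sigma_F > 0$ (preserved by the flow, as noted in the outline of Theorem \ref{s1-thm1}), these give $\frac{d}{dt}V_{n+1} \geq 0$ and $\frac{d}{dt}V_{n-k} \leq 0$, so $\mathcal{I}_{k+2}(\Omega_t, W_F) = V_{n-k}/V_{n+1}^{(n-k)/(n+1)}$ is non-increasing.

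Passing to the limit, together with the elementary identity $V_j(\bar{r}W_F, W_F) = \bar{r}^j (n+1)|W_F|$ (from $\sigma_F \equiv 1$ and $\kappa_i \equiv 1/\bar{r}$ on $\bar{r}\Sigma_F$),
\begin{equation*}
\mathcal{I}_{k+2}(\Omega, W_F) \geq \mathcal{I}_{k+2}(\bar{r}W_F, W_F) = ((n+1)|W_F|)^{(k+1)/(n+1)},
\end{equation*}
and substituting $V_{n+1} = (n+1)|\Omega|$ and $V_{n-k} = \int_M E_k\, d\mu_F$ and rearranging reproduces \eqref{s1:AF}. For the equality case, equality in \eqref{s1:AF} forces $\mathcal{I}_{k+2}(\Omega_t)$ to be constant in $t$, hence $\frac{d}{dt}V_{n-k}|_{t=0} = 0$; since the integrand $\sigma_F(E_{k+1}E_{k+2}^{1/(k+2)} - E_{k+2})$ is continuous and non-negative, it vanishes pointwise, forcing $\kappa_1 = \cdots = \kappa_n$ on $M$, which characterizes $M$ (up to translation) as a scaled Wulff shape. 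The serious analytic obstacle has already been overcome in Theorem \ref{s1-thm1}; what remains for this corollary is a direct combination of the Minkowski identity, Newton-Maclaurin, and the smooth convergence of the flow.
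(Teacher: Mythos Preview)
Your proposal is correct and matches the paper's own proof essentially line for line: run the flow \eqref{flow-1} with index $k+2$, use the Minkowski identity together with Newton--MacLaurin and $\sigma_F>0$ to get $\tfrac{d}{dt}V_{n+1}\geq 0$ and $\tfrac{d}{dt}V_{n-k}\leq 0$, pass to the limit $\bar r\Sigma_F$ supplied by Theorem~\ref{s1-thm1}, and read off the equality case from the vanishing of the nonnegative integrand. Your explicit remark that a preliminary translation places the origin inside $\Omega$ (so that Theorem~\ref{s1-thm1} applies) is a detail the paper leaves implicit.
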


The paper is organized as follows. In \S \ref{sec2} we review some preliminaries on the anisotropic geometry, the mixed volumes and the Alexandrov--Fenchel inequalities. In \S \ref{sec:evl}, we derive the evolution equations along the flow \eqref{flow-1}. In \S \ref{sec:C01}, we derive the $C^0$ and $C^1$ estimates of the solution $M_t$. To derive the curvature estimate, we employ the anisotropic Gauss map parametrization which we describe in \S \ref{sec:Gauss}. Then in \S \ref{sec:C2}, we estimate the upper bound on the curvature function $E_k(\kappa)$ and the lower bound on the anisotropic principal curvatures. The examination of the monotonicity of $ \mathcal{I}_k(\Omega_t,W_F)$ will be given in \S \ref{sec:monot}. Finally, we complete the proofs of Theorem \ref{s1-thm1} and Corollary \ref{s1-cor1} in the \S \ref{sec:LTE} and \S \ref{sec:exp}. Throughout the paper the Einstein convention on the summation for indices is used unless otherwise stated.

\begin{ack}
The first author was supported by National Key Research and
Development Project SQ2020YFA070080 and a research grant from University of Science and
Technology of China. The second author was supported by Australian Laureate Fellowship FL150100126 of the Australian Research Council.
\end{ack}

\section{Preliminaries}\label{sec2}
In this section, we review some preliminaries of the anisotropic geometry, the mixed volumes and the Alexandrov--Fenchel inequalities.
\subsection{The Wulff shape}\label{sec:2-1}
Let $F$ be a smooth positive function on the standard sphere $\mathbb{S}^n$ such that the matrix
\begin{equation}\label{s2:A_F}
  A_{F}(x)~=~\nabla^{\mathbb{S}}\nabla^{\mathbb{S}} {F}(x)+F(x)g_{\mathbb{S}^n},\quad x\in \mathbb{S}^n
\end{equation}
is positive definite on $\SS^n$, where $\nabla^{\mathbb{S}}$ denotes the covariant derivative on $\mathbb{S}^n$. Then there exists a unique smooth strictly convex hypersurface $\Sigma_F$ given by
\begin{equation*}
  \Sigma_F=\{\phi(x)|\phi(x):=F(x)x+\nabla^{\mathbb{S}} {F}(x),~x\in \mathbb{S}^n\}
\end{equation*}
whose support function is given by $F$. Denote the closure of the enclosed domain of $\Sigma_F$ by $W_F$. We call $W_F$ (and $\Sigma_F$) the Wulff shape determined by the function $F\in C^{\infty}(\mathbb{S}^n)$. When $F$ is a constant, the Wulff shape is just a round sphere.

The smooth function $F$ on $\mathbb{S}^n$ can be extended homogeneously to a $1$-homogeneous function on $\mathbb{R}^{n+1}$ by
\begin{equation*}
  F(x)=|x|F({x}/{|x|}), \quad x\in \mathbb{R}^{n+1}\setminus\{0\}
\end{equation*}
and setting $F(0)=0$. Then it is easy to show that $\phi(x)=DF(x)$ for $x\in \mathbb{S}^n$, where $D$ denotes the gradient on $\mathbb{R}^{n+1}$. The homogeneous extension $F$ defines a Minkowski norm on $\mathbb{R}^{n+1}$, that is, $F$ is a norm on $\mathbb{R}^{n+1}$ and $D^2(F^2)$ is uniformly positive definite on $\mathbb{R}^{n+1}\setminus\{0\}$. We can define a dual Minkowski norm $F^0$ on $\mathbb{R}^{n+1}$ by
\begin{equation}\label{s2:gam0}
  F^0(z):=\sup_{x\neq 0}\frac{\langle x,z\rangle}{{F}(x)},\quad z\in \mathbb{R}^{n+1}.
\end{equation}
Then the Wulff shape $W_F$ can be written as
\begin{equation*}
W_F=\{z\in \R^{n+1}:{F}^0(z)\leq  1\},
\end{equation*}
and $\Sigma_F=\partial W_F=\{z\in \R^{n+1}: F^0(z)=1\}$.

\subsection{Anisotropic curvatures}\label{sec:2-2}
Let $M$ be a smooth hypersurface in the Euclidean space $\mathbb{R}^{n+1}$ with a  unit normal vector field $\nu$, and $\Sigma_F$ be the Wulff shape defined in \S \ref{sec:2-1}. We define the anisotropic unit normal of $M$ as the map $\nu_{F}: M\to \Sigma_F$ given by
\begin{equation*}
  \nu_{F}(x)~=~\phi(\nu(x))={F}(\nu(x))\nu(x)+\nabla^{\mathbb{S}} {F}(\nu(x))~\in ~\Sigma_F
\end{equation*}
for any point $x\in M$. It follows from the $1$-homogeneity of $F$ that $\nu_{F}(x)=DF(\nu(x))$. The anisotropic Weingarten map is a linear map
$$\mathcal{W}_{F}=d\nu_{F}: T_xM\to T_{\nu_{F}(x)}\Sigma_F.$$
Note that $\mathcal{W}_{F}=A_{F}\circ \mathcal{W}$, where $\mathcal{W}=d\nu=(h_i^j)$ is the Weingarten map of the hypersurface $M$, and that the eigenvalues of $\mathcal{W}$ are the (isotropic) principal curvatures $\lambda=(\lambda_1,\cdots,\lambda_n)$. The eigenvalues of $\mathcal{W}_{F}$ are called the anisotropic principal curvatures of $M$, and we denote them by $\kappa=(\kappa_1,\cdots,\kappa_n)$.  If we write $\mathcal{W}_{F}=(\hat{h}_i^j)$ in local coordinates, then
\begin{equation}\label{s2:hat-h}
  \hat{h}_i^j(x)=(A_{F}(\nu(x)))_i^kh_k^j(x).
\end{equation}
In particular, it follows from \eqref{s2:hat-h} that $E_n(\kappa)=\det(A_{F}(\nu))E_n(\lambda)$.

There is another formulation of the anisotropic curvatures, which was introduced by Ben~Andrews in \cite{And01} and reformulated by Chao~Xia in \cite{Xia13}. The idea is to define a new metric on $\mathbb{R}^{n+1}$ using the second derivatives of the dual function ${F}^0$ of the homogeneous extension of $F$. This new metric on the $\mathbb{R}^{n+1}$ is defined as
\begin{equation}\label{s2:G}
  G(z)(U,V):=\frac 12\sum_{i,j=1}^{n+1}\frac{\partial^2({F}^0)^2(z)}{\partial z^i\partial z^j}U^iV^j
\end{equation}
for $z\in \mathbb{R}^{n+1}\setminus\{0\}$ and $U,V\in T_{z}\mathbb{R}^{n+1}$.  The third derivatives of $F^0$ give a $(0,3)$-tensor
\begin{equation}\label{s2:Q-def}
  Q(z)(U,V,Y):=\frac 12\sum_{i,j,k=1}^{n+1}\frac{\partial^3({F}^0)^2(z)}{\partial z^i\partial z^j\partial z^k}U^iV^jY^k
\end{equation}
for $z\in \mathbb{R}^{n+1}\setminus\{0\}$ and $U,V,Y\in T_{z}\mathbb{R}^{n+1}$. The $1$-homogeneity of $F^0$ implies that
\begin{align}
  G(z)(z,z) =& 1,\quad G(z)(z,V)=0,\quad \mathrm{for}\quad z\in \Sigma_F, ~\mathrm{and} ~V\in T_{z}\Sigma_F, \nonumber\\
  Q(z)(z,U,V) =& 0,\quad \mathrm{for}\quad z\in \Sigma_F,~\mathrm{and}~U,V\in \mathbb{R}^{n+1}. \label{s2:Q-prop}
\end{align}

For a smooth hypersurface $M$ in $\mathbb{R}^{n+1}$, the anisotropic normal $\nu_{F}(x)$ lies in $\Sigma_F$. Then
\begin{align*}
  G(\nu_{F})(\nu_{F},\nu_{F}) =& 1,\quad G(\nu_{F})(\nu_{F},U)=0,\quad \mathrm{for} ~U\in T_xM, \\
   Q(\nu_{F})(\nu_{F},U,V)=&0,\quad \mathrm{for} ~U,V\in \mathbb{R}^{n+1}. 
\end{align*}
Thus $\nu_{F}(x)$ is perpendicular to $T_xM$ with respect to the metric $G(\nu_{F}(x))$. This induces a Riemannian metric $\hat{g}$ on $M$ from $(\mathbb{R}^{n+1},G)$ by
\begin{equation}\label{s2:g-hat}
\hat{g}(x):=G(\nu_{F}(x))|_{T_xM},\quad x\in M. 
\end{equation}
Then we can state the anisotropic Gauss and Weingarten formulas for $(M,\hat{g})\subset (\mathbb{R}^{n+1},G)$:
\begin{align}
  \partial_i\partial_jX=&-\hat{h}_{ij}\nu_F+\hat{\nabla}_{\partial_i}\partial_j+A_{ijk}\hat{g}^{k\ell}\partial_\ell X,\label{s2:Gauss-for}\\
  \partial_i\nu_F=&\hat{h}_{i}^k\partial_kX,\label{s2:AWeingart}
\end{align}
where $X$ is the position vector of $M$ in $\mathbb{R}^{n+1}$, $\hat{\nabla}$ is the Levi-Civita connection of $\hat{g}$, $\hat{h}_{ij}$ stands for the second fundamental form of $(M,\hat{g})\subset (\mathbb{R}^{n+1},G)$,
and the anisotropic Weingarten map relates to $\hat{h}_{ij}$ by $\mathcal{W}_{F}=(\hat{h}_i^j)=(\sum_k\hat{g}^{jk}\hat{h}_{ik})$.  We also have the anisotropic analogue of the Gauss equation and the Codazzi equation:
\begin{align}\label{s2:Gaus1}
  \hat{R}_{ijk\ell}~ =& ~\hat{h}_{ik}\hat{h}_{j\ell}-\hat{h}_{i\ell}\hat{h}_{jk}+\hat{\nabla}_{\ell}A_{jki}  -\hat{\nabla}_kA_{j\ell i}\nonumber\\
  &\quad +\hat{g}^{pm}A_{jkp}A_{m\ell i}-\hat{g}^{pm}A_{j\ell p}A_{mki},
\end{align}
\begin{equation}\label{s2:codazzi}
  \hat{\nabla}_k\hat{h}_{ij}+\hat{h}_j^\ell{A}_{\ell ki}~=~\hat{\nabla}_j\hat{h}_{ik}+\hat{h}_k^\ell{A}_{\ell ji},
\end{equation}
where $\hat{R}$ is the Riemannian curvature tensor of $\hat{g}$, and $A$ is a $(0,3)$-tensor
\begin{equation}\label{s2:A-def}
{A}_{ijk}=-\frac 12\left(\hat{h}_i^\ell Q_{jk\ell}+\hat{h}_j^\ell Q_{i\ell k}-\hat{h}_k^\ell Q_{ij\ell}\right),
\end{equation}
with $Q_{ijk}=Q(\nu_{F})(\partial_iX,\partial_jX,\partial_kX)$.  Note that by the definition \eqref{s2:Q-def}, $Q$ is totally symmetric in all three indices. Hence the tensor ${A}_{ijk}$ is symmetric for the first two indices.

When the hypersurface $M$ is the Wulff shape $\Sigma_F$, the anisotropic normal $\nu_{F}$ is just the position vector, and the anisotropic principal curvatures are all equal to $1$. Then $A_{ijk}=-Q_{ijk}/2$ which is symmetric in all indices.  We use the notations $\bar{g}$, $\bar{\nabla}$ and $\bar{R}$ to denote the induced metric on the Wulff shape $\Sigma_F$ from $(\mathbb{R}^{n+1},G)$, its Levi-Civita connection and curvature tensor respectively. Then $\bar{\nabla}_\ell Q_{ijk}$ is totally symmetric in four indices (Prop.~2.2 in \cite{Xia17-2}). In this case, the Gauss equation \eqref{s2:Gaus1} is simplified as
 \begin{equation}\label{s2:gauss-2}
  \bar{R}_{ijk\ell}=\bar{g}_{ik}\bar{g}_{j\ell}-\bar{g}_{i\ell}\bar{g}_{jk}+\frac 14\bar{g}^{pq}Q_{i\ell p}Q_{qjk}-\frac 14\bar{g}^{pq}Q_{ikp}Q_{j\ell q}.
\end{equation}

\subsection{Anisotropic support function}
Recall that the anisotropic support function $\sigma_F:{M}\rightarrow \R$ is defined by the following decomposition
\begin{equation*}
X=\sigma_F\nu_F+V^i\pt_iX,
\end{equation*}
where $V^i\pt_iX$ is the tangential part of the position vector $X$ along the hypersurface, and so
\begin{equation}\label{s2:spt-def}
  \sigma_F=G(\nu_F)(X,\nu_F).
\end{equation}
The anisotropic support function $\sigma_F$ is related to the Euclidean support function $\sigma=\langle X,\nu\rangle$ by
\begin{equation*}
  \sigma_F={\sigma}/{{F}(\nu)}.
\end{equation*}
For any smooth hypersurface $M$ in $\mathbb{R}^{n+1}$, we define the anisotropic area functional as
\begin{equation*}
  |M|_{F}:=\int_MF(\nu)d\mu,
\end{equation*}
where $d\mu$ is the area form of the induced metric on $M$ from the Euclidean space $\mathbb{R}^{n+1}$. We set $d\mu_{F}=F(\nu)d\mu$ and call it the anisotropic area form of $M$.

We recall the following generalized Minkowski identities.
\begin{prop}[see \cite{HL08}]
Let $M=\partial \Omega$ be a smooth closed hypersurface in $\mathbb{R}^{n+1}$. We have the following generalized Minkowski identities:
\begin{equation}
  \int_M\sigma_Fd\mu_F~=~(n+1)|\Omega|,
\end{equation}
and
\begin{equation}\label{eq2.Minkowski}
 \int_{M}E_{k+1}(\kappa)\sigma_Fd\mu_F~=~\int_ME_{k}(\kappa)d\mu_F
\end{equation}
for $k=0,1,\cdots,n-1$, where $E_k(\kappa)$ denotes the $k$th anisotropic mean curvature of $M$.
\end{prop}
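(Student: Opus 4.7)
The first identity is a direct consequence of the definitions together with the Euclidean divergence theorem. Since $\sigma_F = \sigma/F(\nu)$ with $\sigma = \langle X,\nu\rangle$, and $d\mu_F = F(\nu)\,d\mu$, one has $\sigma_F\,d\mu_F = \langle X,\nu\rangle\,d\mu$. The position vector $X$ on $\R^{n+1}$ satisfies $\mathrm{div}_{\R^{n+1}} X = n+1$, so
\[
\int_M \sigma_F\,d\mu_F \;=\; \int_M \langle X,\nu\rangle\,d\mu \;=\; \int_\Omega \mathrm{div}_{\R^{n+1}} X\,dV \;=\; (n+1)|\Omega|.
\]

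For the generalized Minkowski identity \eqref{eq2.Minkowski}, I plan to adapt the classical Hsiung--Minkowski proof to the anisotropic setting, working intrinsically on $(M,\hat g)$ with the anisotropic Gauss and Weingarten formulas \eqref{s2:Gauss-for}--\eqref{s2:AWeingart}. Let $(T_k)_i^j$ be the $k$-th Newton transformation of the anisotropic Weingarten map $\mathcal{W}_F = (\hat h_i^j)$, characterized by $\mathrm{tr}(T_k) = (n-k)\binom{n}{k}E_k(\kappa)$ and $\mathrm{tr}(T_k\,\mathcal{W}_F) = (k+1)\binom{n}{k+1}E_{k+1}(\kappa)$. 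Starting from the decomposition $X = \sigma_F\,\nu_F + V^i\partial_iX$, differentiating along $M$ and using the anisotropic Gauss/Weingarten formulas, I would first extract the tangent/normal identities
\[
\hat\nabla_j V^i \;=\; \delta_j^i - \sigma_F\,\hat h_j^i - V^\ell A_{j\ell}{}^i, \qquad \hat\nabla_j \sigma_F \;=\; \hat h_{j\ell}\,V^\ell,
\]
with $A_{ijk}$ as in \eqref{s2:A-def}. Then I would compute the divergence $\hat\nabla_i\bigl((T_k)_j^i\,V^j\bigr)$ with respect to $\hat g$ and integrate it over the closed hypersurface $M$ against $d\mu_F$. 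After substituting the two identities above and invoking the anisotropic Codazzi equation \eqref{s2:codazzi}, the principal terms collapse, using the combinatorial identity $(n-k)\binom{n}{k} = (k+1)\binom{n}{k+1}$, exactly into the difference
\[
\int_M E_k(\kappa)\,d\mu_F \;-\; \int_M E_{k+1}(\kappa)\,\sigma_F\,d\mu_F,
\]
which is what we need.

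The main technical obstacle is to account for all the anisotropic correction terms involving the tensor $A_{ijk}$: these come from two sources, namely the $A$-term in $\hat\nabla V$, and the $A$-terms that obstruct direct divergence-freeness of $(T_k)_i^j$ through the anisotropic Codazzi equation. Showing that these contributions cancel upon integration relies on the special structure of $A$ in \eqref{s2:A-def} as contractions of $\hat h$ with the totally symmetric $Q_{ijk}$, combined with the symmetry of $(T_k)^{ij}$; on the Wulff shape side this cancellation is already reflected by the total symmetry of $\bar\nabla_\ell Q_{ijk}$ in all four indices noted in \S \ref{sec:2-2}. In the isotropic limit $F\equiv 1$ both $Q$ and $A$ vanish, $(T_k)^{ij}$ is divergence-free, and the argument reduces to the classical Reilly--Hsiung proof of the Minkowski formulas.
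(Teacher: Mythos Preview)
The paper does not give its own proof of this proposition; it simply records the result with the citation ``see \cite{HL08}''. So there is no in-paper argument to compare against, and the relevant benchmark is the He--Li proof.

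Your proof of the first identity is complete and correct.

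For the second identity, your outline follows the right template (Newton transformation plus a divergence computation), but there is a genuine gap in the execution. You propose to compute the $\hat g$-divergence $\hat\nabla_i\bigl((T_k)^i_j V^j\bigr)$ and integrate it against $d\mu_F$. The divergence theorem for $\hat\nabla$ on the closed hypersurface $(M,\hat g)$ makes $\int_M \hat\nabla_i W^i\,d\hat\mu$ vanish, not $\int_M \hat\nabla_i W^i\,d\mu_F$; and $d\hat\mu$ (the Riemannian area form of $\hat g$) is not in general equal to $d\mu_F=F(\nu)\,d\mu$. So either you must verify this equality of measures, or the mismatch produces yet another layer of correction terms on top of the $A$-contributions you already flagged. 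Your claim that all the $A$-terms cancel ``by symmetry'' is asserted but not checked; the total symmetry of $\bar\nabla Q$ you cite is a statement on the Wulff shape, not on a general $M$, and $A_{ijk}$ on $M$ is only symmetric in its first two indices. These are exactly the places where the anisotropic argument can fail, so leaving them unverified is a real gap, not just missing bookkeeping.

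The He--Li proof avoids these difficulties by working with the Euclidean induced metric $g$ and its Levi-Civita connection $\nabla$ on $M$, rather than with $\hat g$ and $\hat\nabla$. In that setting the anisotropic Newton tensor $T_k(\mathcal W_F)$ is genuinely divergence-free with respect to $\nabla$ (this uses the Euclidean Codazzi equation and the fact that $A_F$ depends only on $\nu$), there are no $A$-correction terms to chase, and the integration is against the ordinary $d\mu$, which matches the divergence theorem directly. If you want to salvage your $\hat g$-based approach, you will need to carry out the full cancellation computation explicitly; otherwise it is considerably cleaner to switch to the Euclidean-connection route.
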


\subsection{Mixed volumes and Alexandrov--Fenchel inequalities}\label{sec:2-AF}$\ $

Let $\Omega_1,\cdots,\Omega_{n+1}$ be convex bodies in $\mathbb{R}^{n+1}$. The Minkowski sum $\sum_{i=1}^{n+1}\epsilon_i\Omega_i$ is the set of points in $\mathbb{R}^{n+1}$ of the form $\sum_{i=1}^{n+1}\epsilon_ix_i$, where $x_i\in \Omega_i$ and $\epsilon_i$ are real numbers. The volume of this Minkowski sum is a polynomial of degree $n+1$ in $\epsilon_i$, and its coefficients are called \emph{mixed volumes}:
\begin{equation*}
 V[\Omega_1,\Omega_2,\cdots,\Omega_{n+1}]=\frac{1}{n!}\frac{\partial^{n+1}}{\partial\epsilon_1\cdots\partial\epsilon_{n+1}} \mathrm{Vol}(\sum_{i=1}^{n+1}\epsilon_i\Omega_i).
\end{equation*}
The mixed volumes satisfy a quadratic inequality called the Alexandrov--Fenchel inequality:
\begin{equation}\label{s2:AF}
  V[\Omega_1,\Omega_1,\Omega_3,\cdots,\Omega_{n+1}]V[\Omega_2,\Omega_2,\Omega_3,\cdots,\Omega_{n+1}]\leq V[\Omega_1,\Omega_2,\Omega_3,\cdots,\Omega_{n+1}]^2
\end{equation}
for any convex bodies $\Omega_1,\cdots,\Omega_{n+1}$ in $\mathbb{R}^{n+1}$.
See Schneider's book \cite[\S 7.3]{Sch14} for a detailed introduction.  The general inequality \eqref{s2:AF} has many important special cases, one of which is reviewed as follows. Given a fixed strictly convex body $W_F$, define
\begin{equation*}
  V_k(\Omega,W_F)=V[\underbrace{W_F,\cdots,W_F}_{n+1-k},\underbrace{\Omega,\cdots,\Omega}_k].
\end{equation*}
Then $V_{n+1}(\Omega,W_F)=(n+1)\mathrm{Vol}(\Omega)$ and $V_{0}(\Omega,W_F)=(n+1)\mathrm{Vol}(W_F)$. The inequality \eqref{s2:AF} implies the following inequalities:
\begin{equation}\label{s2:AF1}
  V_j^{k-i}(\Omega,W_F)~\geq~V_i^{k-j}(\Omega,W_F)V_k^{j-i}(\Omega,W_F)
\end{equation}
for any convex body $\Omega$ and any integers $i,j,k$ with $0\leq i<j<k\leq n+1$, which generalize the classical isoperimetric inequality. If $\partial\Omega$ is in class $C^2$, the mixed volumes $V_k(\Omega,W_F)$ are related to the following boundary anisotropic curvature integrals:
\begin{equation*}
 V_{n-k}(\Omega,W_F)=\int_{\partial\Omega}E_k(\kappa)d\mu_F,\quad k=0,\cdots,n,
\end{equation*}
where $\kappa=(\kappa_1,\cdots,\kappa_n)$ are the anisotropic principal curvatures of $\partial\Omega$ with respect to the Wulff shape $W_F$.  In the case that $W_F$ is the unit Euclidean ball $\mathbb{B}^{n+1}$, we denote the isotropic mixed volumes by $V_{k}(\Omega)=V_k(\Omega,\mathbb{B}^{n+1})$ for simplicity of the notation.  When $\partial\Omega$ is in class $C^2$,
\begin{equation*}
 V_{n-k}(\Omega)=\int_{\partial\Omega}E_k(\lambda)d\mu,\quad k=0,\cdots,n,
\end{equation*}
where $\lambda$ are the (isotropic) principal curvatures of $\partial\Omega$.

It is interesting to provide new proofs of the inequality \eqref{s2:AF1} (and the more general one \eqref{s2:AF}) by using geometric flows, and to prove them for a broader class of domains. Several progress has been made in this direction. For example, McCoy \cite{McC2005} used the mixed volume preserving flow to give a new proof of \eqref{s2:AF1} for the case $W_F=\mathbb{B}^{n+1}$, $i=0$ and $\partial\Omega$ is smooth and convex. Schulze \cite{Sch08} applied the flow by powers of the mean curvature to reprove the isoperimetric inequality for bounded domains in $\R^{n+1}$ ($n\leq 7$). Guan and Li \cite{GL} used the inverse curvature flow to prove \eqref{s2:AF1} for the case $W_F=\mathbb{B}^{n+1}$, $i=0$ and $\partial\Omega$ is smooth, $(n-j)$-convex and star-shaped. For the general Wulff shape $W_F$, Xia \cite{Xia17} proved the inequality \eqref{s2:AF1} for $i=0$, $j=n-1$, $k=n$ and a star-shaped, $F$-mean convex domain $\Omega$ using the inverse anisotropic mean curvature flow. The authors of the present paper reproved in \cite{WeiX20} the inequality \eqref{s2:AF1} for $i=0, k=n+1$ and any $j=2,\cdots,n$ for smooth and convex domains by applying the volume-preserving flow \eqref{flow0}.

\section{Evolution equations} \label{sec:evl}

We first review evolution equations for some geometric quantities on the hypersurface $M$ in $\mathbb{R}^{n+1}$. Let $X(\cdot,t):M \rightarrow \mathbb{R}^{n+1}$, $t\in [0,T)$, be a smooth family of embeddings satisfying
\begin{equation}\label{eq2.1}
\frac{\pt }{\pt t}X =\eta \nu_F,
\end{equation}
where $\eta$ is a time-dependent smooth function and  $\nu_F$ denotes the anisotropic normal of $M_t=X(M,t)$. The enclosed domain of $M_t$ is denoted by $\Omega_t$. The following equations were calculated in \cite{And01,Xia17}.
\begin{lem}[{\cite{And01,Xia17}}]
Under the equation \eqref{eq2.1}, we have
\begin{align}
\frac{\pt}{\pt t}d\mu_F&= \eta H_F d\mu_F,\label{s2:dmu}\\
\frac{\pt}{\pt t}\nu_F&= -\hat{\nabla} \eta,\label{s2:nu_F}\\
\frac{\pt}{\pt t} \hat{g}_{ij}&=2\eta \hat{h}_{ij}-Q_{ij}{}^k\hat{\nabla}_k \eta,\\
\frac{\pt}{\pt t}\hat{h}^{\ell}_k&= -\hat{\nabla}^\ell\hat{\nabla}_k \eta-{A}_k{}^{p\ell}\hat{\nabla}_p \eta-\eta \hat{h}^p_k\hat{h}_p^\ell,\label{eq2.h}\\
\frac{\pt}{\pt t}\sigma_F &= \eta -V^i\hat{\nabla}_i\eta,\label{eq2.sigma}
\end{align}
where the upper indices are lifted using the metric $\hat{g}$, the function $H_{F}=nE_1$ is the anisotropic mean curvature of $M_t$, the tensors $A$ and $Q$ are defined in \S \ref{sec:2-2}, and $V^i=\hat{g}^{i\ell}G(\nu_F)(\partial_\ell X,X)$ stands for the tangential part of the position vector $X$.  In addition, the volume of the evolving domain $\Omega_t$ satisfies
\begin{align}\label{s3:Vol-1}
\frac{d}{dt} \mathrm{Vol}(\Omega_t)&=\int_{M_t}\eta d\mu_F.
\end{align}
\end{lem}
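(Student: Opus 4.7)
The plan is to derive each of the six evolution equations by direct calculation from the flow equation $\pt_tX=\eta\,\nu_F$, using the anisotropic Gauss formula \eqref{s2:Gauss-for}, the Weingarten relation \eqref{s2:AWeingart}, and the total symmetry together with the radial vanishing \eqref{s2:Q-prop} of the cubic tensor $Q$. The template in every case is the same: differentiate each defining identity in $t$, then decompose the resulting $\mathbb{R}^{n+1}$-valued expression into its $\hat g$-tangential and $\nu_F$-normal parts; the $Q$ tensor surfaces whenever one differentiates $G(\nu_F)$ in $t$ through \eqref{s2:Q-def}.

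I would begin with \eqref{s2:nu_F}. Since $\nu_F(\cdot,t)$ takes values in $\Sigma_F$, the vector $\pt_t\nu_F$ lies in $T_{\nu_F}\Sigma_F$ and hence can be written as $c^k\pt_kX$. Differentiating the relation $G(\nu_F)(\nu_F,\pt_jX)=0$ in $t$ and substituting the commuted identity $\pt_t\pt_jX=\pt_j(\eta\nu_F)=\hat\nabla_j\eta\,\nu_F+\eta\,\hat h_j^k\pt_kX$ coming from \eqref{s2:AWeingart} solves for $c^k=-\hat g^{kj}\hat\nabla_j\eta$, giving $\pt_t\nu_F=-\hat\nabla\eta$. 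The formula for $\pt_t\hat g_{ij}$ is then obtained by differentiating $\hat g_{ij}=G(\nu_F)(\pt_iX,\pt_jX)$: the variation of $G(\nu_F)$ yields the $-Q_{ij}{}^k\hat\nabla_k\eta$ piece after plugging \eqref{s2:nu_F} into the definition \eqref{s2:Q-def} of $Q$, while the symmetric contribution from the two $\pt_t\pt_iX$ factors produces $2\eta\hat h_{ij}$ via \eqref{s2:AWeingart}.

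The formula \eqref{eq2.h} follows by differentiating the Weingarten relation $\pt_i\nu_F=\hat h_i^k\pt_kX$ in $t$. Substituting $\pt_t\nu_F=-\hat\nabla\eta$ on the left, $\pt_t\pt_kX=\hat\nabla_k\eta\,\nu_F+\eta\,\hat h_k^\ell\pt_\ell X$ on the right, and reading off the $\pt_\ell X$ coefficient after using the Gauss formula \eqref{s2:Gauss-for} to convert the remaining second derivative of $\eta$ into tangential and normal components, yields the stated evolution; the $A_k{}^{p\ell}\hat\nabla_p\eta$ term is precisely the $A$-tensor contribution of \eqref{s2:Gauss-for}. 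For \eqref{eq2.sigma} I would differentiate $\sigma_F=G(\nu_F)(X,\nu_F)$: the variation of $G(\nu_F)$ in the direction $\pt_t\nu_F$ vanishes when one argument equals $\nu_F$ by \eqref{s2:Q-prop}, so only the variations of the two $X$ and $\nu_F$ arguments contribute, producing $\pt_t\sigma_F=G(\nu_F)(\eta\nu_F,\nu_F)+G(\nu_F)(X,-\hat\nabla\eta)=\eta-V^i\hat\nabla_i\eta$ after using $G(\nu_F)(\nu_F,\nu_F)=1$ together with the decomposition $X=\sigma_F\nu_F+V^i\pt_iX$.

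For \eqref{s2:dmu} the cleanest route is to use the relation $d\mu_F=F(\nu)\,d\mu$, so that the calculation reduces to the classical Euclidean first variation applied with normal speed $\eta F(\nu)$ (the Euclidean tangential part $\eta\nabla^{\mathbb{S}} F(\nu)$ of $\eta\nu_F$ is absorbed into a reparametrization and does not affect the pointwise evolution), and then to recognize that the resulting expression is exactly $\eta H_F\,d\mu_F$ using the definition of the anisotropic mean curvature. Finally, \eqref{s3:Vol-1} is the first variation of enclosed volume: writing $\nu_F=F(\nu)\nu+\nabla^{\mathbb{S}} F(\nu)$ with $\nabla^{\mathbb{S}} F(\nu)\perp\nu$ shows that the Euclidean normal speed is $\eta F(\nu)$, and therefore $\tfrac{d}{dt}|\Omega_t|=\int_M\eta F(\nu)\,d\mu=\int_M\eta\,d\mu_F$. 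The main bookkeeping difficulty throughout is the consistent handling of the tensor $Q$: its total symmetry and its vanishing along the radial direction must be invoked repeatedly to collapse derivatives of $G(\nu_F)$ into the compact tensorial form stated in the lemma.
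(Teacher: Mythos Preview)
The paper does not give its own proof of this lemma: it is stated with a citation to \cite{And01,Xia17} (``The following equations were calculated in \cite{And01,Xia17}''), so there is no in-paper argument to compare against. Your outline follows precisely the standard route used in those references: differentiate each defining identity in $t$, commute $\partial_t$ with $\partial_i$, and split the result along $\nu_F$ and $T_xM$ using \eqref{s2:Gauss-for}, \eqref{s2:AWeingart}, and the properties \eqref{s2:Q-prop} of $Q$. The derivations you sketch for \eqref{s2:nu_F}, $\partial_t\hat g_{ij}$, \eqref{eq2.h}, \eqref{eq2.sigma}, and \eqref{s3:Vol-1} are correct and match the cited sources.

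One point deserves more care. In your treatment of \eqref{s2:dmu} you write that the Euclidean tangential part $\eta\,\nabla^{\mathbb{S}}F(\nu)$ of $\eta\nu_F$ ``is absorbed into a reparametrization and does not affect the pointwise evolution.'' That is not literally true: a tangential velocity $W$ contributes a term $\mathrm{div}_M W$ to the pointwise evolution of $d\mu$ and also moves $\nu$ (hence $F(\nu)$) along the hypersurface. What actually happens is that when you differentiate $d\mu_F=F(\nu)\,d\mu$ and keep all the tangential contributions, the divergence term coming from $d\mu$ and the Lie derivative of $F(\nu)$ along $W$ combine to give exactly the missing pieces needed to produce $\eta H_F\,d\mu_F$; nothing is discarded. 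Alternatively (and this is closer to Andrews' original computation in \cite{And01}), one can show that $d\mu_F$ coincides with the Riemannian volume form of $\hat g$ and then use your formula for $\partial_t\hat g_{ij}$ together with the identity $\hat g^{ij}Q_{ijk}=0$ (a consequence of the $0$-homogeneity of $G$) to read off $\partial_t d\mu_F=\tfrac12\hat g^{ij}\partial_t\hat g_{ij}\,d\mu_F=\eta\,\hat h_i^i\,d\mu_F=\eta H_F\,d\mu_F$ directly. Either route closes the argument; just do not drop the tangential part without accounting for it.
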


In \cite{Rei76} Reilly derived the following variational formula for the mixed volumes.
\begin{lem}[{\cite{Rei76}}]
Under the equation \eqref{eq2.1},  the mixed volume $V_{n+1-k}(\Omega_t,W_F)$ relative to the Wulff shape $W_F$ evolves by
\begin{equation}\label{s3:evl-Vk}
  \frac d{dt}V_{n+1-k}(\Omega_t,W_F)~=~(n+1-k)\int_{M_t}\eta\: {E}_kd\mu_{F},
\end{equation}
where $k=1,\cdots,n$.
\end{lem}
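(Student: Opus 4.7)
The plan is to differentiate the boundary representation
\begin{equation*}
V_{n+1-k}(\Omega_t, W_F) = \int_{M_t} E_{k-1}(\kappa) \, d\mu_F,
\end{equation*}
recorded in \S \ref{sec:2-AF}, and reduce the result to a multiple of $\int \eta E_k \, d\mu_F$ by combining integration by parts with the Newton identity for elementary symmetric polynomials. First I would apply the Leibniz rule together with the evolution \eqref{s2:dmu} of the anisotropic area form to obtain
\begin{equation*}
\frac{d}{dt} V_{n+1-k}(\Omega_t, W_F) = \int_{M_t} \left( \frac{\pt E_{k-1}}{\pt t} + \eta\, H_F\, E_{k-1} \right) d\mu_F.
\end{equation*}

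Let $\dot{E}_{k-1}^{ij} := \pt E_{k-1}/\pt \hat{h}_j^i$ denote the (normalized) anisotropic Newton tensor. Substituting the evolution \eqref{eq2.h} of $\hat{h}_i^j$ splits $\pt_t E_{k-1}$ into three pieces: a Hessian piece $-\dot{E}_{k-1}^{ij}\hat{\nabla}_i \hat{\nabla}_j \eta$, an anisotropic drift piece $-\dot{E}_{k-1}^{ij} A_i{}^p{}_j \hat{\nabla}_p \eta$, and a quadratic curvature piece $-\eta\, \dot{E}_{k-1}^{ij} \hat{h}_i^p \hat{h}_p^j$. The Newton identity for the normalized elementary symmetric polynomials gives $\dot{E}_{k-1}^{ij} \hat{h}_i^p \hat{h}_p^j = H_F\, E_{k-1} - (n+1-k)\, E_k$, so the quadratic piece combines with the $\eta H_F E_{k-1}$ term from the area-form variation to produce exactly $(n+1-k)\eta E_k$, which is the desired pointwise integrand.

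It then remains to show that the Hessian and drift pieces integrate to zero against $d\mu_F$. I would integrate the Hessian term by parts twice, moving both derivatives onto $\dot{E}_{k-1}^{ij}$. The anisotropic Codazzi identity \eqref{s2:codazzi} implies that $\dot{E}_{k-1}^{ij}$ is not divergence-free, but that $\hat{\nabla}_i \dot{E}_{k-1}^{ij}$ is an explicit combination of $\dot{E}_{k-1}$ and $A$. My expectation is that after one integration by parts the $A$-correction produced in this way cancels the drift term $-\dot{E}_{k-1}^{ij} A_i{}^p{}_j \hat{\nabla}_p \eta$ exactly. I would verify this cancellation by a direct computation using the definition \eqref{s2:A-def} of $A_{ijk}$ together with the total symmetry of $Q_{ijk}$ recorded in \S \ref{sec:2-2}.

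The principal technical obstacle is precisely this cancellation. In the isotropic case ($Q \equiv 0$) the Newton tensor is divergence-free by the ordinary Codazzi equation and Reilly's formula reduces to a textbook computation; in the anisotropic setting the $Q$-tensor introduces non-trivial corrections in both \eqref{s2:codazzi} and in the evolution \eqref{eq2.h} of $\hat{h}_i^j$, and one must confirm that these two sources balance. The computation is of the same flavour as those carried out in \cite{And01,Xia17} for anisotropic curvature flows, and I would follow that template here.
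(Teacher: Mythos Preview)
The paper does not supply its own proof of this lemma: it is stated with attribution to Reilly \cite{Rei76} and no argument is given. So there is nothing in the paper to compare your proposal against beyond the citation.

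Your outline is the standard route and is sound in its broad strokes. The Newton identity you quote, $\dot{E}_{k-1}^{ij}\hat{h}_i^p\hat{h}_p^j = H_F E_{k-1} - (n+1-k)E_k$, is correct for the normalized symmetric functions, and the reduction of the zero-order terms to $(n+1-k)\eta E_k$ goes exactly as you describe. The part you flag as the principal obstacle---showing that the Hessian term $-\int \dot{E}_{k-1}^{ij}\hat\nabla_i\hat\nabla_j\eta\,d\mu_F$ and the drift term $-\int \dot{E}_{k-1}^{ij}A_i{}^p{}_j\hat\nabla_p\eta\,d\mu_F$ together vanish---is genuine but not deep: one integration by parts on the Hessian term produces $\int \hat\nabla_i\dot{E}_{k-1}^{ij}\,\hat\nabla_j\eta\,d\mu_F$ plus a boundary-of-the-area-form correction coming from the fact that $d\mu_F = F(\nu)d\mu$ is not the volume form of $\hat g$. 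Tracking both of these against the anisotropic Codazzi equation \eqref{s2:codazzi} and the definition \eqref{s2:A-def} of $A$ gives the cancellation. This is essentially the computation carried out in \cite{And01} and \cite{HL08}; if you want a self-contained argument rather than a citation, following those references (or Reilly's original paper) line by line is the right plan.
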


We now derive the evolution equations of the function $E_k^{1/k}(\kappa)$ and the anisotropic support function $\sigma_F$ along the flow \eqref{flow-1}. We rewrite the flow \eqref{flow-1} as the following form
\begin{equation}\label{s3:flow-2}
    \frac{\partial }{\partial t}X~=~(1-\Psi(\mathcal{W}_F)\sigma_F) \nu_F,
 \end{equation}
where
\begin{equation*}
  \Psi(\mathcal{W}_F)= E_{k}^{1/k}(\kappa),\quad k=2,\cdots,n.
\end{equation*}
Denote by $\dot{\Psi}^{ij}$ and $\ddot{\Psi}^{ij,kl}$ the first and second derivatives of $\Psi$ with respect to the components of $\mathcal{W}_F=(\hat{h}_i^j)$.
\begin{lem}
Along the flow \eqref{s3:flow-2}, the anisotropic support function $\sigma_F$ and the function $\Psi$ of the evolving hypersurface $M_t$ evolve by
\begin{align}\label{s3:evl-u}
  \frac{\partial }{\partial t}\sigma_F =&~\sigma_F\dot{\Psi}^{ij}\left(\hat{\nabla}^j\hat{\nabla}_i\sigma_F+A_{qip}\hat{g}^{jp}\hat{\nabla}^q\sigma_F\right)+\Psi\hat{\nabla}^k\sigma_FG(\nu_F)(X,\partial_kX)\nonumber\\
  &\quad +\sigma_F^2\dot{\Psi}^{ij}\hat{h}_i^k\hat{h}_k^j+1-2\sigma_F\Psi
\end{align}
and
\begin{align}\label{s3:evl-Psi}
  \frac{\partial}{\partial t}\Psi= & \sigma_F\dot{\Psi}^{ij}\left(\hat{\nabla}^j\hat{\nabla}_i\Psi+A_{qip}\hat{g}^{jp}\hat{\nabla}^q\Psi\right)+2\dot{\Psi}^{ij}\hat{\nabla}_i\sigma_F\hat{\nabla}^j\Psi\nonumber\\
  &\quad +\Psi\hat{\nabla}^k\Psi G(\nu_F)(X,\partial_kX)+\Psi^2-\dot{\Psi}^{ij}\hat{h}_i^k\hat{h}_k^j,
\end{align}
where $A_{ijk}$ is the tensor defined in \eqref{s2:A-def}.
\end{lem}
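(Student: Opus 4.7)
The plan is to plug the speed $\eta = 1 - \Psi\sigma_F$ into the general evolution identities \eqref{eq2.sigma} and \eqref{eq2.h}, and then use the anisotropic Gauss--Codazzi structure to convert the first-order tangential term $V^i\hat\nabla_i\Psi$ into a second-order form. The key preliminary step is a Ricci-type identity for $\hat\nabla^2\sigma_F$. Writing out $\partial_jX = \partial_j(\sigma_F\nu_F + V^i\partial_iX)$ and separating normal from tangential components via the Gauss--Weingarten formulas \eqref{s2:Gauss-for}--\eqref{s2:AWeingart} gives the pair of identities
\begin{equation*}
  \hat\nabla_j\sigma_F = \hat h_{jk}V^k, \qquad \hat\nabla_j V^k = \delta_j^k - \sigma_F\hat h_j^k - V^m A_{jm}{}^k.
\end{equation*}
Differentiating the first once more and substituting the second produces the commutation formula
\begin{equation*}
  \hat\nabla_i\hat\nabla_j\sigma_F = V^k\hat\nabla_i\hat h_{jk} + \hat h_{ij} - \sigma_F\hat h_i^k\hat h_{jk} - V^m A_{im}{}^k\hat h_{jk},
\end{equation*}
which is the workhorse for what follows.

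To obtain \eqref{s3:evl-u}, I substitute $\eta = 1-\Psi\sigma_F$ into \eqref{eq2.sigma} and expand the product to get $\partial_t\sigma_F = 1 - \Psi\sigma_F + \sigma_F V^i\hat\nabla_i\Psi + \Psi V^i\hat\nabla_i\sigma_F$. The final summand already equals $\Psi\hat\nabla^k\sigma_F\,G(\nu_F)(X,\partial_kX)$ because $V_k = G(\nu_F)(X,\partial_kX)$. For the middle term I contract the commutation formula with $\dot\Psi^{ij}$ (after raising $j$), apply Euler's relation $\dot\Psi^{ij}\hat h_i^j = \Psi$ available since $\Psi = E_k^{1/k}$ is $1$-homogeneous, and rewrite $V^k\dot\Psi^{ij}\hat\nabla_i\hat h_k^j$ via the Codazzi equation \eqref{s2:codazzi} as $V^k\hat\nabla_k\Psi$ plus $A$-corrections. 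This yields the rearrangement
\begin{equation*}
  V^i\hat\nabla_i\Psi = \dot\Psi^{ij}\bigl(\hat\nabla^j\hat\nabla_i\sigma_F + A_{qip}\hat g^{jp}\hat\nabla^q\sigma_F\bigr) + \sigma_F\dot\Psi^{ij}\hat h_i^k\hat h_k^j - \Psi,
\end{equation*}
and multiplying by $\sigma_F$ and assembling the pieces produces \eqref{s3:evl-u}.

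The evolution \eqref{s3:evl-Psi} is derived in parallel fashion, starting from the chain rule $\partial_t\Psi = \dot\Psi^{ij}\partial_t\hat h_i^j$ and \eqref{eq2.h}. Expanding $\hat\nabla_p(\Psi\sigma_F)$ and $\hat\nabla^j\hat\nabla_i(\Psi\sigma_F)$ by Leibniz splits the right-hand side into the leading elliptic part $\sigma_F\dot\Psi^{ij}\hat\nabla^j\hat\nabla_i\Psi$, the term $\Psi\dot\Psi^{ij}\hat\nabla^j\hat\nabla_i\sigma_F$ to be reduced via the commutation identity above, two cross-gradient pieces $\dot\Psi^{ij}\hat\nabla^j\sigma_F\hat\nabla_i\Psi$ and $\dot\Psi^{ij}\hat\nabla^j\Psi\hat\nabla_i\sigma_F$ that merge into $2\dot\Psi^{ij}\hat\nabla_i\sigma_F\hat\nabla^j\Psi$ by the symmetry of $\dot\Psi^{ij}$, $A$-corrections from $A_{i}{}^{pj}\hat\nabla_p\eta$, and the zero-order piece $-(1-\Psi\sigma_F)\dot\Psi^{ij}\hat h_i^k\hat h_k^j$. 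Applying Step~1 to the $\Psi\hat\nabla^2\sigma_F$ piece produces $\Psi V^k\hat\nabla_k\Psi + \Psi^2 - \Psi\sigma_F\dot\Psi^{ij}\hat h_i^k\hat h_k^j$ up to $A$-terms, the $\Psi\sigma_F\dot\Psi^{ij}\hat h_i^k\hat h_k^j$ contributions cancel, and the clean $\Psi^2 - \dot\Psi^{ij}\hat h_i^k\hat h_k^j$ emerges.

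The main obstacle I expect is the bookkeeping of the $A$-tensor: structurally similar $A$-contributions arise in three different places---the commutation formula for $\hat\nabla^2\sigma_F$, the $A_{i}{}^{pj}\hat\nabla_p\eta$ built into \eqref{eq2.h}, and the Codazzi swap---and one must verify that they aggregate into the single clean $\dot\Psi^{ij}A_{qip}\hat g^{jp}\hat\nabla^q(\cdot)$ term displayed in \eqref{s3:evl-u} and \eqref{s3:evl-Psi}. This uses in an essential way the symmetry of $A_{ijk}$ in its first two indices recorded after \eqref{s2:A-def}.
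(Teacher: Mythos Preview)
Your approach is essentially the same as the paper's: both derive the second-order identity for $\hat\nabla^2\sigma_F$ from the Gauss--Weingarten formulas, invoke Codazzi to convert $V^k\dot\Psi^{ij}\hat\nabla_i\hat h_k^j$ into $V^k\hat\nabla_k\Psi$, and then substitute into \eqref{eq2.sigma} and \eqref{eq2.h} with $\eta=1-\Psi\sigma_F$. The only point where the paper is more explicit than your outline is precisely the $A$-bookkeeping you flag at the end: rather than relying solely on the first-two-index symmetry of $A$, the paper chooses an orthonormal frame diagonalizing $(\hat h_i^j)$, uses $\dot\Psi^{ij}=\dot\psi^i\delta^{ij}$ from \eqref{s5:phi-d}, and verifies by direct computation that the combined $A$- and $Q$-corrections contract with $\dot\Psi^{ij}$ to zero, leaving exactly the single $\dot\Psi^{ij}A_{qip}\hat g^{jp}\hat\nabla^q(\cdot)$ term; this cancellation uses the explicit definition \eqref{s2:A-def} of $A$ in terms of $\hat h$ and $Q$, not just its symmetry.
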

\begin{proof}
(i) First, by \eqref{eq2.sigma}, we see that along the equation \eqref{eq2.1}, the anisotropic support function $\sigma_F$ satisfies
\begin{align}
 \frac{\partial}{\partial t}\sigma_F =& \eta-\hat{\nabla}^k\eta G(\nu_F)(X,\partial_kX).\label{s3:evl-u1}
\end{align}
Using the anisotropic Weingarten equation \eqref{s2:AWeingart}, we also have
\begin{align}\label{s3:dsigm}
 \hat{\nabla}_i\sigma_F =& \hat{h}_i^pG(\nu_F)(\partial_pX,X).
\end{align}
Taking the covariant derivative of \eqref{s3:dsigm} and using the Gauss and Weingarten formulas \eqref{s2:Gauss-for}--\eqref{s2:AWeingart}, we have
\begin{align*}
  \hat{\nabla}_j\hat{\nabla}_i\sigma_F =& \hat{\nabla}_j\hat{h}_i^pG(\nu_F)(\partial_pX,X)+\hat{h}_i^pQ(\nu_F)(\hat{h}_j^\ell\partial_\ell X,\partial_pX,X) \\
  & \quad +\hat{h}_i^pG(\nu_F)\left(-\hat{h}_{jp}\nu_F+\hat{g}^{k\ell}A_{jp\ell}\partial_kX,X\right)+\hat{h}_{ij}\\
  =&~\hat{h}_{ij}-\hat{h}_i^p\hat{h}_{jp}\sigma_F+\left( \hat{\nabla}_j\hat{h}_i^k+\hat{h}_i^p\hat{h}_j^\ell Q_{\ell pq}\hat{g}^{qk}+\hat{h}_i^p\hat{g}^{k\ell}A_{jp\ell}\right)G(\nu_F)(\partial_kX,X)\\
  =&~\hat{h}_{ij}-\hat{h}_i^p\hat{h}_{jp}\sigma_F+\left( \hat{\nabla}^k\hat{h}_{ij}+\hat{h}_j^\ell A_{\ell qi}\hat{g}^{qk}-\hat{h}_q^\ell A_{\ell ji}\hat{g}^{qk}\right)G(\nu_F)(\partial_kX,X)\nonumber\\
  &+\left( \hat{h}_i^p\hat{h}_j^\ell Q_{\ell pq}\hat{g}^{qk}+\hat{h}_i^p\hat{g}^{k\ell}A_{jp\ell}\right)G(\nu_F)(\partial_kX,X)\\
  =&~\hat{h}_{ij}-\hat{h}_i^p\hat{h}_{jp}\sigma_F+ \hat{\nabla}^k\hat{h}_{ij}G(\nu_F)(\partial_kX,X)-A_{\ell ji}\hat{\nabla}^\ell\sigma_F\\
  &\quad +\left(\hat{h}_j^\ell A_{\ell qi}\hat{g}^{qk}+\hat{h}_i^p\hat{h}_j^\ell Q_{\ell pq}\hat{g}^{qk}+\hat{h}_i^p\hat{g}^{k\ell}A_{jp\ell}\right)G(\nu_F)(\partial_kX,X),
\end{align*}
where in the third equality we used the Codazzi equation \eqref{s2:codazzi}.

Regarding the term in the last line of the above equation, we take a local orthonormal frame $\{e_i\}_{i=1}^n$ on $M_t$ such that $(\hat{h}_i^j)=\mathrm{diag}(\kappa_1,\cdots,\kappa_n)$ at the point of computation. Then using $\dot{\Psi}^{ij}=\dot{\psi}^i\delta_{ij}$ (see \eqref{s5:phi-d} below), we can compute directly to get
\begin{equation*}
\dot{\Psi}^{ij}\left(\hat{h}_j^\ell A_{\ell qi}\hat{g}^{qk}+\hat{h}_i^p\hat{h}_j^\ell Q_{\ell pq}\hat{g}^{qk}+\hat{h}_i^p\hat{g}^{k\ell}A_{jp\ell}\right)=0.
\end{equation*}
Therefore we have
\begin{equation}\label{s3:sig_F}
\dot{\Psi}^{ij}(\hat{\nabla}_j\hat{\nabla}_i\sigma_F+A_{\ell ji}\hat{\nabla}^\ell\sigma_F+\sigma_F\hat{h}_i^p\hat{h}_{jp})=\dot{\Psi}^{ij}(\hat{h}_{ij}+ \hat{\nabla}^k\hat{h}_{ij}G(\nu_F)(\partial_kX,X)).
\end{equation}
Then \eqref{s3:evl-u} follows by setting $\eta=1-\Psi \sigma_F$ and using the $1$-homogeneity of $\Psi$.

(ii) By \eqref{eq2.h} and noting that $\eta=1-\Psi \sigma_F$, we have
\begin{align*}
  \frac{\partial}{\partial t}\Psi=& \dot{\Psi}^{ij} \frac{
  \partial}{\partial t}\hat{h}_i^j\\
  = & \dot{\Psi}^{ij} \left(\hat{\nabla}^j\hat{\nabla}_i (\Psi\sigma_F)+{A}_i{}^{pj}\hat{\nabla}_p (\Psi\sigma_F)+(\Psi\sigma_F-1) \hat{h}^p_i\hat{h}_p^j\right)\\
   =& \sigma_F\dot{\Psi}^{ij} \left(\hat{\nabla}^j\hat{\nabla}_i\Psi+{A}_i{}^{pj}\hat{\nabla}_p \Psi\right)+2\dot{\Psi}^{ij}\hat{\nabla}_i\sigma_F\hat{\nabla}^j\Psi\\
   &+\Psi\dot{\Psi}^{ij}\left(\hat{\nabla}^j\hat{\nabla}_i\sigma_F+{A}_i{}^{pj}\hat{\nabla}_p\sigma_F+\sigma_F\hat{h}^p_i\hat{h}_p^j\right)-\dot{\Psi}^{ij}\hat{h}^p_i\hat{h}_p^j.
\end{align*}
Note that the tensor $A$ is symmetric in the first two indices. Substituting \eqref{s3:sig_F} into the above equation and using the $1$-homogeneity of $\Psi$, we obtain the equation \eqref{s3:evl-Psi}.
\end{proof}

\section{$C^0$ and $C^1$ estimates}\label{sec:C01}

In this section, we derive the $C^0$ and $C^1$ estimates of the solution $M_t$ to the flow \eqref{flow-1}.
\subsection{Radial graph parametrization}
We view the Euclidean space as a warped product space  $\R^{n+1}=[0,+\infty)\times \SS^n$ equipped with the metric
\begin{equation*}
g_{\R^{n+1}}=d\rho^2+\rho^2 g_{\SS^n},\quad \rho\in (0,+\infty).
\end{equation*}
Then any star-shaped hypersurface $M$ in $\mathbb{R}^{n+1}$ can be written as a graph over $\SS^n$:
\begin{equation*}
X(\theta)=(\rho(\theta),\theta),\quad \: \theta\in \SS^n.
\end{equation*}
This is also equivalent to that the support function $\sigma=\langle X,\nu\rangle$ is positive everywhere on the hypersurface. Take a local coordinate system $\{\theta^i\}_{i=1}^n$ for $\SS^n$, denote by $e_{ij}$ the components of the round metric $g_{\SS^n}$ on $\SS^n$, and set $\gamma=\log \rho$. Then the unit normal vector field of the hypersurface is given by
\begin{equation}\label{s4.nu}
  \nu=\frac 1{\omega}(1,-\frac{1}{\rho^2}\nabla^{\mathbb{S}}\rho),
\end{equation}
where $\omega=\sqrt{1+|\nabla^\SS \gamma|^2}$. Since the vector field $\rho\partial_{\rho}$ is just the position vector $X$, we see that the support function $\sigma=\rho/\omega$. The first and second fundamental forms of $M$ can be expressed as
\begin{align*}
g_{ij}&=\rho^2(e_{ij}+\gamma_i\gamma_j),\quad g^{ij}=\frac{1}{\rho^2}\left(e^{ij}-\frac{\gamma_i\gamma_j}{\omega^2}\right),\\
h_{ij}&=\frac{\rho}{\omega}(-\gamma_{ij}+\gamma_i\gamma_j+e_{ij}),\\
h^i_j&=\frac{1}{\rho\omega}\left(e^{ik}-\frac{\gamma_i\gamma_k}{\omega^2}\right)(-\gamma_{kj}+\gamma_k\gamma_j+e_{kj}).
\end{align*}
It follows from \eqref{s2:hat-h} that the anisotropic Weingarten matrix $\mathcal{W}_F=(\hat{h}_i^j)$ satisfies
\begin{align}\label{s4.hat-h}
  \hat{h}_i^j =& (A_F(\nu))_i^kh_{k}^j \nonumber\\
  =& \frac{1}{\rho\omega}(A_F(\nu))_i^k\left(e^{j\ell}-\frac{\gamma_j\gamma_{\ell}}{\omega^2}\right)(-\gamma_{\ell k}+\gamma_\ell\gamma_k+e_{k\ell})\nonumber\\
  =& \frac{1}{\rho\omega}(A_F(\nu))_i^k\left(\delta_k^j-\left(e^{j\ell}-\frac{\gamma_j\gamma_{\ell}}{\omega^2}\right)\gamma_{\ell k}\right),
\end{align}
where $\nu$ is evaluated as in \eqref{s4.nu}.

\subsection{Star-shapedness and short time existence}
We assume that the initial hypersurface $M_0$ is strictly convex with the origin contained inside the domain $\Omega_0$ bounded by $M_0$. Then $M_0$ is star-shaped with respect to the origin. It follows that the support function $\sigma$ of $M_0$ is positive,  which implies that the anisotropic support function $\sigma_F=\sigma /F(\nu)$ is positive on $M_0$ as well.
\begin{lem}\label{s4:lem-sigm}
If the initial hypersurface $M_0$ is star-shaped with respect to the origin, then the anisotropic support function $\sigma_F$ of the solution $M_t$ to the flow \eqref{flow-1} is positive for any positive time $t>0$.
\end{lem}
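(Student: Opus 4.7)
The plan is to apply the parabolic minimum principle directly to the evolution equation \eqref{s3:evl-u} for $\sigma_F$. Set $\sigma_{\min}(t):=\min_{M_t}\sigma_F$; since the solution is smooth, $\sigma_{\min}$ is continuous in $t$, with $\sigma_{\min}(0)>0$ because $M_0$ is star-shaped with respect to the origin (recall $\sigma_F=\sigma/F(\nu)$ and $F>0$). The task reduces to ruling out the existence of a first time $t_0>0$ at which $\sigma_{\min}(t_0)=0$.

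The key structural observation is that every term on the right-hand side of \eqref{s3:evl-u}, apart from the constant $+1$ in the reaction $1-2\sigma_F\Psi$, carries at least one factor of $\sigma_F$ or of $\hat{\nabla}\sigma_F$. Indeed, the two first-order terms $\sigma_F\dot{\Psi}^{ij}A_{qip}\hat{g}^{jp}\hat{\nabla}^q\sigma_F$ and $\Psi\hat{\nabla}^k\sigma_F\,G(\nu_F)(X,\partial_kX)$ vanish wherever $\hat{\nabla}\sigma_F=0$; the Hessian term $\sigma_F\dot{\Psi}^{ij}\hat{\nabla}^j\hat{\nabla}_i\sigma_F$ carries the explicit coefficient $\sigma_F$; and the curvature-squared term $\sigma_F^2\dot{\Psi}^{ij}\hat{h}_i^k\hat{h}_k^j$ carries $\sigma_F^2$. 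Consequently, substituting $\sigma_F=0$ and $\hat{\nabla}\sigma_F=0$ into the right-hand side of \eqref{s3:evl-u} produces exactly $+1$, regardless of the behavior of $\hat{\nabla}^2\sigma_F$ or of the curvature.

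With this in hand the contradiction is immediate. Suppose $t_0>0$ is the first time $\sigma_{\min}$ vanishes and let $x_0\in M_{t_0}$ realize the spatial minimum. At this point one has $\sigma_F(x_0,t_0)=0$ and $\hat{\nabla}\sigma_F(x_0,t_0)=0$, so the observation above gives $\partial_t\sigma_F(x_0,t_0)\geq 1$, whereas the fact that $\sigma_F\geq 0$ on $M\times[0,t_0]$ with equality at $(x_0,t_0)$ forces $\partial_t\sigma_F(x_0,t_0)\leq 0$; contradiction. I do not expect any real obstacle for this lemma: it is a clean maximum principle argument whose only technical content is the vanishing-coefficient accounting. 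A quantitative refinement of the same calculation, using in addition the positive semidefiniteness of $(\dot{\Psi}^{ij})$ on the cone where $M_t$ sits (so that the Hessian and curvature terms have a definite sign), will be needed later to upgrade this qualitative positivity into a uniform lower bound $\sigma_F\geq c_0>0$ on $M\times[0,\infty)$ for the subsequent $C^1$ estimate.
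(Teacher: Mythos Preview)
Your argument is correct for the lemma as stated: the coefficient accounting is accurate, and at a putative first zero of $\sigma_F$ the right-hand side of \eqref{s3:evl-u} indeed equals $+1$, yielding the contradiction.

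The paper's proof takes a slightly different and more informative route. Rather than arguing by contradiction at a first zero, it uses the inverse-concavity of $\Psi=E_k^{1/k}$ to get $\dot{\Psi}^{ij}\hat{h}_i^k\hat{h}_k^j\geq \Psi^2$, which lets the reaction terms $\sigma_F^2\dot{\Psi}^{ij}\hat{h}_i^k\hat{h}_k^j+1-2\sigma_F\Psi$ be bounded below by the complete square $(\sigma_F\Psi-1)^2\geq 0$. Dropping this square leaves a linear parabolic inequality for $\sigma_F$ with no zero-order term, so the maximum principle gives directly the \emph{uniform} bound $\sigma_F(\cdot,t)\geq \min_{M_0}\sigma_F>0$. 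Your approach is more elementary in that it does not invoke inverse-concavity at all, but it only yields $\sigma_F>0$; as you correctly anticipate, the quantitative lower bound is what is actually needed for the $C^1$ estimate in \S\ref{sec:C1}, and the paper obtains it in the same stroke rather than as a separate refinement.
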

\begin{proof}
Since $\Psi=E_k^{1/k}$ is inverse-concave with respect to its arguments, we have  (see, e.g., \cite[Lemma 5]{And-MZ13})
\begin{equation}\label{s4.psiij}
  \dot{\Psi}^{ij}\hat{h}_i^k\hat{h}_k^j\geq \Psi^2.
\end{equation}
Then the evolution equation \eqref{s3:evl-u} for $\sigma_F$ implies that
\begin{align*}
  \partial_t\sigma_F \geq &~\sigma_F\dot{\Psi}^{ij}\left(\hat{\nabla}^j\hat{\nabla}_i\sigma_F+A_{qip}\hat{g}^{jp}\hat{\nabla}^q\sigma_F\right)+\Psi\hat{\nabla}^k\sigma_FG(\nu_F)(X,\partial_kX)+(\sigma_F\Psi-1)^2\nonumber\\
  \geq &~\sigma_F\dot{\Psi}^{ij}\left(\hat{\nabla}^j\hat{\nabla}_i\sigma_F+A_{qip}\hat{g}^{jp}\hat{\nabla}^q\sigma_F\right) +\Psi\hat{\nabla}^k\sigma_FG(\nu_F)(X,\partial_kX).
\end{align*}
It follows from the parabolic maximum principle that
\begin{equation}\label{s5:sigma_F-lbd}
\sigma_F(y,t)\geq \min_{y\in {M}} \sigma_F(y,0)>0.
\end{equation}
\end{proof}

Since $\sigma=\sigma_F{F}(\nu)$ and ${F}(\nu)>0$, we have $\sigma>0$ on $M_t$ for any time $t>0$. That is, the evolving hypersurface $M_t$ remains to be star-shaped with respect to the origin along the flow \eqref{flow-1}. Thus we can write $M_t$ as the graph of a radial function $\rho(\theta,t)$ over $\mathbb{S}^n$. Since the anisotropic normal vector field is given by $\nu_F=F(\nu)\nu+\nabla^{\mathbb{S}}F(\nu)$ and $\nabla^{\mathbb{S}}{F}(\nu)$ is perpendicular to $\nu$, the flow \eqref{s3:flow-2} is equivalent to
\begin{equation}\label{s5:flow-3}
  \frac{\pt }{\pt t}X=(1-\Psi\sigma_F){F}(\nu)\nu
\end{equation}
up to a smooth reparametrization. It is a well known result that the flow \eqref{s5:flow-3} for star-shaped hypersurfaces is equivalent to the following scalar parabolic PDE
\begin{equation}\label{s5:rho-t}
  \frac{\pt }{\pt t}\rho=\omega (1-\Psi\sigma_F){F}(\nu)
\end{equation}
of the radial function $\rho(\theta,t)$ on $\mathbb{S}^n$. By the facts $\sigma=\sigma_F{F}(\nu)$ and $\sigma=\rho/\omega$, the equation \eqref{s5:rho-t} can be rewritten as the following fully nonlinear parabolic equation
\begin{align}\label{eq3.gamma}
\frac{\pt}{\pt t} \gamma&=\frac{\omega }{\rho}{F(\nu)}-\frac{1}{\rho \omega}\Psi\left((A_F(\nu))_i^k \left(\delta^j_k-\left(e^{j\ell}-\frac{\gamma_j\gamma_\ell}{\omega^2}\right)\gamma_{k\ell}\right)\right)
\end{align}
of the function $\gamma(\theta,t)=\log\rho(\theta,t)$ on $\mathbb{S}^n$, where $\nu$ is evaluated as in \eqref{s4.nu}.

Since the initial hypersurface $M_0$ is strictly convex, the short time existence of \eqref{eq3.gamma} is standard by using the implicit function theorem.

\begin{lem}
Let $M_0$ be a smooth, closed and strictly convex hypersurface in $\mathbb{R}^{n+1}$. Then there exists a unique smooth solution $M_t$ of the flow \eqref{flow-1} starting from $M_0$ on some maximal time interval $[0,T)$ with $T\leq \infty$. The hypersurface $M_t$ is star-shaped for any $t\in [0,T)$.
\end{lem}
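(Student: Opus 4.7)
The plan is to reduce the flow to the scalar parabolic equation \eqref{eq3.gamma} for $\gamma=\log\rho$ on $\mathbb{S}^n$, verify that the equation is (uniformly) parabolic at strictly convex initial data, and then invoke the standard existence/uniqueness theory for fully nonlinear parabolic equations on a closed manifold.

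First I would argue that $M_0$ is star-shaped with respect to the origin: strict convexity together with $0\in\Omega_0$ implies that the Euclidean support function $\sigma$ of $M_0$ is positive everywhere, hence $M_0$ can be written as the radial graph of some smooth positive function $\rho_0\in C^\infty(\mathbb{S}^n)$. Setting $\gamma_0=\log\rho_0$, the flow \eqref{flow-1} is equivalent (up to tangential diffeomorphism, as explained in the derivation of \eqref{s5:rho-t}--\eqref{eq3.gamma}) to the scalar equation
\begin{equation*}
\partial_t\gamma~=~\frac{\omega}{\rho}F(\nu)-\frac{1}{\rho\omega}\Psi\!\left((A_F(\nu))_i^k\Bigl(\delta_k^j-\Bigl(e^{j\ell}-\tfrac{\gamma_j\gamma_\ell}{\omega^2}\Bigr)\gamma_{k\ell}\Bigr)\right)
\end{equation*}
with initial data $\gamma_0$.

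Next I would check parabolicity. Since $M_0$ is strictly convex and $A_F(\nu)$ is positive definite, formula \eqref{s4.hat-h} shows that $\mathcal{W}_F$ has positive eigenvalues at $t=0$, so $\kappa(0)$ lies in the positive cone $\Gamma_+$ where $\Psi=E_k^{1/k}$ is smooth, monotone and concave with $\dot\Psi^{ij}$ positive definite. Linearising the right-hand side above in $\gamma_{k\ell}$ produces a principal symbol proportional to
\begin{equation*}
\frac{1}{\rho\omega}\,\dot\Psi^{j}{}_{p}\,(A_F(\nu))_i^{p}\Bigl(e^{j\ell}-\tfrac{\gamma_j\gamma_\ell}{\omega^2}\Bigr),
\end{equation*}
and all three factors (the Fr\'echet derivative $\dot\Psi$, the matrix $A_F(\nu)$, and the conformal inverse metric $e^{j\ell}-\gamma_j\gamma_\ell/\omega^2$) are positive definite, so their composition has positive eigenvalues. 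Hence the equation is uniformly parabolic in a neighbourhood of $\gamma_0$ in $C^2(\mathbb{S}^n)$.

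With parabolicity in hand, short-time existence and uniqueness of a solution $\gamma\in C^{2+\alpha,1+\alpha/2}(\mathbb{S}^n\times[0,\tau])$ follow from the standard theory for fully nonlinear parabolic equations on closed manifolds (for instance via linearisation and the implicit function theorem in H\"older spaces, or Theorem 8.5.4 in Krylov's book). Bootstrapping using the linear Schauder estimates upgrades $\gamma$ to a smooth solution. Extending the solution to its maximal interval $[0,T)$ with $T\leq\infty$ is then routine. Finally, star-shapedness on all of $[0,T)$ is immediate from Lemma \ref{s4:lem-sigm}: that lemma gives $\sigma_F(\cdot,t)\geq\min_{M_0}\sigma_F(\cdot,0)>0$, whence $\sigma=\sigma_F\,F(\nu)>0$ and the radial graph representation persists.

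The only real subtlety, and hence the ``main obstacle'' in the argument, is verifying parabolicity, which amounts to the observation that the product of the positive-definite matrices $\dot\Psi$, $A_F(\nu)$ and the induced conformal inverse metric has positive eigenvalues; everything else is a standard application of parabolic PDE theory and does not require any flow-specific estimates.
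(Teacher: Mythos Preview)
Your approach matches the paper's exactly: reduce to the scalar equation \eqref{eq3.gamma} for $\gamma$, invoke standard short-time existence for fully nonlinear parabolic equations (the paper simply says ``the short time existence of \eqref{eq3.gamma} is standard by using the implicit function theorem''), and then quote Lemma~\ref{s4:lem-sigm} for the persistence of star-shapedness. The overall outline is correct.

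One technical point in your parabolicity check deserves care. You assert that the principal symbol is a composition of three positive-definite matrices and that such a composition ``has positive eigenvalues''. This is true for a product of \emph{two} positive-definite matrices (via conjugation by a square root), but it fails in general for three: already in $2\times 2$ one can arrange complex eigenvalues. Moreover, positivity of eigenvalues of the (non-symmetric) coefficient matrix is not quite the parabolicity condition; what is needed is that the \emph{symmetrised} coefficient of $\gamma_{mn}$ is positive definite as a bilinear form. The conclusion is nevertheless correct, and the clean way to see it is to pass to the anisotropic metric $\hat g$: the Weingarten operator $\hat h_i^{\,j}=\hat g^{jk}\hat h_{ik}$ is $\hat g$-self-adjoint with $\hat h_{ij}$ symmetric, so $\dot\Psi^{ij}$ (indices raised with $\hat g$) is genuinely positive definite on $T_xM$, and the principal part of the linearised flow is $\sigma_F\,\dot\Psi^{ij}\hat\nabla_i\hat\nabla_j$, which is uniformly elliptic when $\kappa\in\Gamma_+$. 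Since the radial-graph parametrisation differs from this only by a diffeomorphism, parabolicity of \eqref{eq3.gamma} follows.
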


\subsection{$C^0$ estimate} \label{sec:C0}
Since $M_0$ is closed and strictly convex, and it encloses the origin, we see that it is star-shaped with respect to the origin. Then there exist two constants $0<r<R$ such that
\begin{equation*}
rW_F\subset \Omega_0\subset RW_F.
\end{equation*}
Due to the fact that the anisotropic principal curvatures of $\Sigma_F$ are equal to $1$ and the anisotropic support function of $\Sigma_F$ is a constant function $\sigma_F=1$, both the hypersurfaces $r\Sigma_F$ and $R\Sigma_F$ are fixed along the flow \eqref{flow-1}. Moreover, the velocity of the flow \eqref{flow-1} is defined pointwisely. Applying the comparison principle, we get
\begin{equation*}
rW_F\subset \Omega_t \subset RW_F,\text{ for }t\in [0,T).
\end{equation*}
So we have the $C^0$ estimate.

\subsection{$C^1$ estimate}\label{sec:C1}
The $C^1$ estimate follows from the estimate \eqref{s5:sigma_F-lbd} on the anisotropic support function and the $C^0$ estimate. Recall that
\begin{equation}\label{eq3.sigma2}
\sigma_F=\frac{\sigma}{{F}(\nu)}=\frac{\rho}{\omega {F}(\nu)}=\frac{\rho}{\sqrt{1+|\nabla^\SS \gamma|^2} {F}(\nu)}.
\end{equation}
By the estimate \eqref{s5:sigma_F-lbd},
\begin{equation*}
\sigma_F(y,t)\geq \min_{y\in {M}} \sigma_F(y,0),
\end{equation*}
which combined with \eqref{eq3.sigma2} and the $C^0$ estimate implies that
\begin{equation*}
 |\nabla^\SS \gamma|(\cdot,t)\leq C,
\end{equation*}
where $C$ depends on ${F}$, $r$, $R$ and $|\nabla^\SS \gamma|(\cdot,0)$.

\section{Anisotropic Gauss map parametrization}\label{sec:Gauss}
To derive the curvature estimate of the solution $M_t$, we will employ the anisotropic Gauss map parametrization of $M_t$.  In this section, we first review some basic results on this parametrization.

Given a smooth, closed and strictly convex hypersurface $M$ in $\mathbb{R}^{n+1}$, the Gauss map is defined as $\nu: M\to \mathbb{S}^n$ which maps the point $x\in M$ to the outward unit normal $\nu\in \mathbb{S}^n$ at this point. The Gauss map of a smooth closed strictly convex hypersurface is a nondegenerate diffeomorphism between $M$ and $\mathbb{S}^n$. Since the Wulff shape $W_F$ is also strictly convex, we can define a map from $\mathbb{S}^n$ to $\Sigma_F=\partial W_F$ which maps $\nu$ to $\nu_F=DF(\nu)$. Then the anisotropic Gauss map $\nu_F:M\to \Sigma_F$ is the composition of the above two maps and satisfies $$\nu_F(x)=DF(\nu(x))$$ for each point $x\in M$. It follows that the anisotropic Gauss map is a nondegenerate diffeomorphism as well. Thus we can use it to reparametrize $M$:
\begin{equation*}
  X:\Sigma_F\to M\subset \mathbb{R}^{n+1},\quad X(z)=X(\nu_F^{-1}(z)),\quad z\in \Sigma_F.
\end{equation*}
The anisotropic support function of $M$ is then defined as a function on $\Sigma_F$ by $s(z)=G(z)(z,X(z))$ for $z\in \Sigma_F$. Comparing it with the definition \eqref{s2:spt-def} of $\sigma_F$, we see that $\sigma_F(x)=s(\nu_F(x))$.

Let $\bar{g}$ and $\bar{\nabla}$ denote the induced metric and its Levi-Civita connection on $\Sigma_F$ from $(\mathbb{R}^{n+1},G)$ respectively. The anisotropic principal curvatures $\kappa=(\kappa_1,\cdots,\kappa_n)$ of a strictly convex hypersurface $M$ in $\mathbb{R}^{n+1}$ are related to its anisotropic principal radii of curvature $\tau=(\tau_1,\cdots,\tau_n)$ by
\begin{equation*}
  \kappa_i=1/{\tau_i},\quad i=1,\cdots,n.
\end{equation*}
It has been shown that the anisotropic principal radii of curvature $\tau=(\tau_1,\cdots,\tau_n)$ of $M$ are eigenvalues of the following matrix on the Wulff shape $\Sigma_F$ (see \cite{Xia13})
\begin{equation}\label{s6:tau-def}
  \tau_{ij}[s]=\bar{\nabla}_i\bar{\nabla}_js+\bar{g}_{ij}s-\frac 12Q_{ijk}\bar{\nabla}_ks,
\end{equation}
where $s:\Sigma_F\to \mathbb{R}$ is the anisotropic support function on the hypersurface $M$.

\begin{lem}\label{s6:lem1}
We have the following Codazzi and Simons type equations for $\tau_{ij}$:
\begin{equation}\label{s6:Codaz}
  \bar{\nabla}_j\tau_{k\ell}+\frac 12Q_{k\ell p}\tau_{jp}~=~\bar{\nabla}_k\tau_{j\ell}+\frac 12Q_{j\ell p}\tau_{kp}
\end{equation}
and
\begin{align}\label{s6:Simons}
 \bar{\nabla}_{(i} \bar{\nabla}_{j)}\tau_{k\ell} =&\bar{\nabla}_{(k}\bar{\nabla}_{\ell)}\tau_{ji}+\frac 12Q_{jiq}\bar{\nabla}_q\tau_{k\ell}-\frac 12Q_{k\ell p}\bar{\nabla}_p\tau_{ij}\nonumber\\
 & +\bar{g}_{ij}\tau_{k\ell}-\bar{g}_{k\ell}\tau_{ij} +Q*Q*\tau+\bar{\nabla} Q*\tau,
\end{align}
where $*$ denotes contraction of tensors using the metric on $\Sigma_F$, and the bracket $(\,,\,)$ on the indices denotes the symmetrization,meaning e.g.
$$
\bar{\nabla}_{(i} \bar{\nabla}_{j)}\tau_{k\ell}=(\bar{\nabla}_{i} \bar{\nabla}_{j}\tau_{k\ell}+\bar{\nabla}_{j} \bar{\nabla}_{i}\tau_{k\ell})/2.
$$
\end{lem}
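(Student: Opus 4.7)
The plan is to differentiate the defining formula
\[
\tau_{ij}[s]=\bar{\nabla}_i\bar{\nabla}_js+\bar{g}_{ij}s-\tfrac 12Q_{ijk}\bar{\nabla}_ks
\]
directly and then exploit two structural facts about the Wulff shape $\Sigma_F$: the totally symmetric character of $Q$ and $\bar{\nabla} Q$ (Prop.\ 2.2 of \cite{Xia17-2}, quoted just before \eqref{s2:gauss-2}), and the explicit form of the Gauss equation \eqref{s2:gauss-2} on $\Sigma_F$, which lets us substitute the Riemann tensor whenever we commute covariant derivatives.

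First I would prove the Codazzi identity \eqref{s6:Codaz}. Applying $\bar{\nabla}_j$ to $\tau_{k\ell}$ produces the terms $\bar{\nabla}_j\bar{\nabla}_k\bar{\nabla}_\ell s$, $\bar{g}_{k\ell}\bar{\nabla}_j s$, $-\tfrac12(\bar{\nabla}_j Q_{k\ell p})\bar{\nabla}_p s$ and $-\tfrac12 Q_{k\ell p}\bar{\nabla}_j\bar{\nabla}_p s$. Writing out $\bar{\nabla}_k\tau_{j\ell}$ likewise and subtracting, the pure $\bar{g}$–terms cancel, the difference $\bar{\nabla}_j\bar{\nabla}_k\bar{\nabla}_\ell s-\bar{\nabla}_k\bar{\nabla}_j\bar{\nabla}_\ell s$ is controlled by the Ricci identity and will combine with $\tfrac12(\bar{\nabla}_k Q_{j\ell p}-\bar{\nabla}_j Q_{k\ell p})\bar{\nabla}_p s$; the full symmetry of $\bar{\nabla} Q$ in all four indices forces this combination to match precisely $\tfrac12 Q_{j\ell p}\tau_{kp}-\tfrac12 Q_{k\ell p}\tau_{jp}$ modulo terms that telescope using the Gauss equation \eqref{s2:gauss-2} (whose curvature piece on $\Sigma_F$ is $\bar{g}*\bar{g}+Q*Q$). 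Rearranging yields \eqref{s6:Codaz}.

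Next I would establish the Simons-type identity \eqref{s6:Simons}. Starting from \eqref{s6:Codaz}, apply $\bar{\nabla}_i$ to both sides, then symmetrize in $(i,j)$ on the left and in $(k,\ell)$ on the right. The key manipulation is commuting $\bar{\nabla}_i$ past $\bar{\nabla}_k$ (respectively $\bar{\nabla}_\ell$) on the right-hand side using the Ricci identity, producing curvature contractions $\bar{R}_{iknm}\tau^{\ m}_\ell+\bar{R}_{i\ell n m}\tau_{k}^{\ m}$, which via \eqref{s2:gauss-2} split into the metric part $\bar{g}_{ij}\tau_{k\ell}-\bar{g}_{k\ell}\tau_{ij}$ (after tracing with the free indices) plus a $Q*Q*\tau$ remainder. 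The derivatives of the $\tfrac12 Q\tau$ terms in \eqref{s6:Codaz} give $\tfrac12 Q_{k\ell p}\bar{\nabla}_i\tau_{jp}$ and $\tfrac12 Q_{jiq}\bar{\nabla}_q\tau_{k\ell}$ (after one more application of \eqref{s6:Codaz} to trade $\bar{\nabla}_i\tau_{jp}$ for $\bar{\nabla}_p\tau_{ij}$ modulo $Q*\tau$), together with $\bar{\nabla} Q*\tau$ terms. Collecting all contributions and carefully symmetrizing in $(i,j)$ and $(k,\ell)$ yields \eqref{s6:Simons}.

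I expect the main obstacle to be bookkeeping rather than a conceptual difficulty: one has to keep track of the symmetry types of $Q$, $\bar{\nabla} Q$, $\tau$ and the symmetrized indices, and repeatedly re-invoke the Codazzi relation \eqref{s6:Codaz} to rewrite mixed first derivatives of $\tau$ in the correct symmetrization pattern. The only substantive input beyond pure tensor algebra is the Gauss equation \eqref{s2:gauss-2}, which is what feeds the $\bar{g}_{ij}\tau_{k\ell}-\bar{g}_{k\ell}\tau_{ij}$ piece into \eqref{s6:Simons} and accounts for the $Q*Q*\tau$ remainder; the $\bar{\nabla} Q*\tau$ term is collected from the two $-\tfrac12(\bar{\nabla} Q)\bar{\nabla} s$ contributions that arise when differentiating the definition of $\tau$ twice.
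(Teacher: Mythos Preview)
Your plan is correct and follows essentially the same route as the paper: differentiate the defining formula for $\tau_{ij}$, use the Ricci identity together with the Gauss equation \eqref{s2:gauss-2} on $\Sigma_F$ to handle commutators, and then apply the Codazzi relation once more when deriving the Simons identity. One small bookkeeping correction: in the Codazzi step the $\bar g_{k\ell}\bar\nabla_j s$ and $\bar g_{j\ell}\bar\nabla_k s$ terms do not cancel directly but only against the metric part of $\bar R_{jk\ell p}\bar\nabla_p s$ after the Ricci identity, and the $\bar\nabla Q$ terms vanish separately by total symmetry rather than combining with the curvature; otherwise your outline matches the paper's appendix computation.
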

The proof of \eqref{s6:Codaz} is by taking covariant derivatives of \eqref{s6:tau-def}, and using the Gauss equation \eqref{s2:gauss-2} and the Ricci identity. See the proof of Lemma 5.2 in \cite{Xia13}. Taking the covariant derivative of \eqref{s6:Codaz}  and using the Gauss equation \eqref{s2:gauss-2} and the Ricci identity, we can obtain \eqref{s6:Simons} after rearranging the terms. We include a detailed calculation in Appendix \ref{sec:ap1}.

Let $\phi$ denote the dual function of $E_{k}^{1/k}$, which is defined as
\begin{align*}
  \phi(x_1,\cdots,x_n)=&\left(E_{k}(x_1^{-1},\cdots,x_n^{-1})\right)^{-1/k}\\
  =&\left(\frac {E_{n}(x)}{E_{n-k}(x)}\right)^{1/k}
\end{align*}
for $x\in \Gamma_+=\{x\in \mathbb{R}^n: x_i>0, i=1,\cdots,n\}$. We write $\Phi(\tau_{ij})=\phi(\tau)$ as a smooth symmetric function of the matrix $(\tau_{ij})$, which is viewed as the function $\phi$ evaluated at the eigenvalues $\tau=(\tau_1,\cdots,\tau_n)$ of $\tau_{ij}$. As before, we denote by $\dot{\Phi}^{k\ell}$ and $\ddot{\Phi}^{k\ell,pq}$ the derivatives of $\Phi$ with respect to its arguments. We review the following basic properties on the symmetric functions $\Phi$ and $\phi$, which will be used in the curvature estimate later. See \cite{And-MZ13} for more properties.
\begin{lem}
Let $\mathrm{Sym}(n)$ be the set of $n\times n$ symmetric matrices, and let $\Phi(A)=\phi(\tau)$, where $\phi$ is a smooth symmetric function and $A$ is in $\mathrm{Sym}(n)$ with eigenvalues $\tau=(\tau_1,\cdots,\tau_n)$. If $A$ is diagonal, the first derivatives of $\Phi(A)$ and $\phi(\tau)$ with respect to their arguments are related by the following equation
\begin{equation}\label{s5:phi-d}
  \dot{\Phi}^{ij}(A)=\dot{\phi}^i(\tau)\delta_i^j.
\end{equation}
The second derivative of $\Phi$ in the direction $B\in \mathrm{Sym}(n)$ satisfies
\begin{equation}\label{s5:phi-d2}
  \ddot{\Phi}^{ij,k\ell}(A)B_{ij}B_{k\ell}=\ddot{\phi}^{k\ell}(\tau)B_{kk}B_{\ell\ell}+2\sum_{k<\ell}\frac{\dot{\phi}^k(\tau)-\dot{\phi}^\ell(\tau)}{\tau_k-\tau_\ell}(B_{k\ell})^2.
\end{equation}
This formula makes sense as a limit in the case of any repeated values of $\tau_k$.
\end{lem}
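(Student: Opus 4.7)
The plan is to reduce both identities to a computation of the Taylor expansion of $\phi(\tau(t))$ where $\tau(t)$ denotes the eigenvalues of the one-parameter family $A+tB$, then match coefficients with the Taylor expansion of $\Phi(A+tB)$.

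First I would assume $A$ is diagonal with \emph{distinct} eigenvalues $\tau_1,\dots,\tau_n$ and prove both identities under this generic hypothesis. Classical eigenvalue perturbation theory for a smooth one-parameter family of symmetric matrices yields a smooth choice of eigenvalues $\tau_i(t)$ with the expansion
\begin{equation*}
\tau_i(t)=\tau_i+tB_{ii}+t^2\sum_{k\neq i}\frac{B_{ik}^2}{\tau_i-\tau_k}+O(t^3),
\end{equation*}
obtained by differentiating the identity $(A+tB)v_i(t)=\tau_i(t)v_i(t)$ twice at $t=0$ and using orthonormality of the unperturbed eigenvectors $e_i$. Substituting into $\Phi(A+tB)=\phi(\tau(t))$ and Taylor-expanding $\phi$ around $\tau$ gives
\begin{equation*}
\Phi(A+tB)=\phi(\tau)+t\sum_i\dot\phi^i(\tau)B_{ii}+t^2\Bigl[\sum_{i\neq k}\frac{\dot\phi^i(\tau)\,B_{ik}^2}{\tau_i-\tau_k}+\tfrac{1}{2}\sum_{k,\ell}\ddot\phi^{k\ell}(\tau)B_{kk}B_{\ell\ell}\Bigr]+O(t^3).
\end{equation*}
Matching with $\Phi(A+tB)=\Phi(A)+t\dot\Phi^{ij}(A)B_{ij}+\tfrac{t^2}{2}\ddot\Phi^{ij,k\ell}(A)B_{ij}B_{k\ell}+O(t^3)$ and letting $B$ vary, the $t$-coefficient immediately yields \eqref{s5:phi-d}. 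For the $t^2$-coefficient, I would then symmetrize the off-diagonal sum via the involution $i\leftrightarrow k$ (under which the summand is invariant, since both the sign of $\tau_i-\tau_k$ and $B_{ik}^2=B_{ki}^2$ change appropriately), giving
\begin{equation*}
2\sum_{i\neq k}\frac{\dot\phi^i(\tau)\,B_{ik}^2}{\tau_i-\tau_k}=2\sum_{i<k}\frac{\dot\phi^i(\tau)-\dot\phi^k(\tau)}{\tau_i-\tau_k}B_{ik}^2,
\end{equation*}
which is exactly the off-diagonal term in \eqref{s5:phi-d2}.

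The case where $A$ is diagonal but some eigenvalues coincide is handled by a limiting/continuity argument: the function $\ddot\Phi^{ij,k\ell}(A)B_{ij}B_{k\ell}$ is smooth in the symmetric matrix $A$, and the right-hand side of \eqref{s5:phi-d2} extends continuously to the diagonal of $\{\tau_i=\tau_k\}$ because $\dot\phi$ is a smooth symmetric function of $\tau$, so the difference quotient $(\dot\phi^i-\dot\phi^k)/(\tau_i-\tau_k)$ has a well-defined limit equal to $\ddot\phi^{ii}(\tau)-\ddot\phi^{ik}(\tau)$ whenever $\tau_i=\tau_k$. Approximating $\tau$ by diagonal matrices with distinct eigenvalues and passing to the limit yields the identity in full generality.

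The only genuinely delicate point is the eigenvalue perturbation step itself, and in particular the second-order term in the expansion of $\tau_i(t)$; everything else is bookkeeping. This computation is most cleanly done by writing $v_i(t)=e_i+t w_i+O(t^2)$, extracting $w_i$ from the first-order eigenvalue equation and then reading off the second derivative of $\tau_i$ from the second-order equation, using that $\langle w_i,e_i\rangle=0$ by the normalization. No other machinery is required; the rest is Taylor expansion and index manipulation.
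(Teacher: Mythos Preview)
Your proof is correct. Note, however, that the paper does not actually prove this lemma: it is stated without proof and attributed to the reference \cite{And-MZ13} (Andrews--McCoy--Zheng), where the result appears as a standard tool. Your argument via first- and second-order eigenvalue perturbation of $A+tB$ followed by Taylor matching is the standard route to this identity and is essentially the one found in that reference; the symmetrization of the off-diagonal sum and the limiting argument for repeated eigenvalues are both handled correctly.
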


\begin{lem}
The function
\begin{equation*}
  \phi(\tau)=\left(\frac {E_{n}(\tau)}{E_{n-k}(\tau)}\right)^{1/k}
\end{equation*}
is concave with respect to $\tau$, and  we have
\begin{equation}\label{s5:phi-p1}
  \dot{\Phi}^{k\ell}\bar{g}_{k\ell}=\sum_k\dot{\phi}^k\geq 1
\end{equation}
and
\begin{equation}\label{s5:phi-p2}
  (\ddot{\phi}^{ij})\leq 0,\qquad (\dot{\phi}^k(\tau)-\dot{\phi}^\ell(\tau))(\tau_k-\tau_\ell)\leq 0,\quad \forall k\neq \ell.
\end{equation}
\end{lem}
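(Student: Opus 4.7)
The plan is to derive everything from a single fact: $\phi$ is smooth, symmetric, $1$-homogeneous, concave on $\Gamma_+$, and normalized so that $\phi(1,\dots,1)=1$. The symmetry and $1$-homogeneity are immediate from the formula $\phi(\tau)=(E_n(\tau)/E_{n-k}(\tau))^{1/k}$, and $\phi(\mathbf 1)=1$ follows by direct evaluation. For the concavity itself, I would invoke (or reprove via the classical duality) the fact that $\phi$ is the dual of $\psi(\kappa)=E_k^{1/k}(\kappa)$ in the sense
\[
\phi(\tau)=\bigl(\psi(\tau_1^{-1},\dots,\tau_n^{-1})\bigr)^{-1},
\]
which is verified by the identity $E_k(\tau_1^{-1},\dots,\tau_n^{-1})=E_{n-k}(\tau)/E_n(\tau)$. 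Since $\psi$ is known to be concave and \emph{inverse-concave} on the Garding cone $\Gamma_k\supset\Gamma_+$ (a classical result of Garding, used already in~\eqref{s4.psiij}), the dual $\phi$ is automatically concave on $\Gamma_+$; this is exactly the statement that the second-derivative matrix $(\ddot\phi^{ij})\leq 0$.

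Given concavity and $1$-homogeneity, the trace inequality $\sum_k\dot\phi^k\geq 1$ follows by a standard support-line argument: the concavity of $\phi$ at the point $\tau$ gives
\[
\phi(\mathbf 1)\leq \phi(\tau)+\sum_k\dot\phi^k(\tau)(1-\tau_k),
\]
and Euler's identity for the $1$-homogeneous function $\phi$ supplies $\sum_k\dot\phi^k(\tau)\tau_k=\phi(\tau)$. Combining the two and using $\phi(\mathbf 1)=1$ cancels the $\phi(\tau)$ terms and yields $\sum_k\dot\phi^k(\tau)\geq 1$.

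For the sign condition $(\dot\phi^k(\tau)-\dot\phi^\ell(\tau))(\tau_k-\tau_\ell)\leq 0$, I would exploit symmetry along a single coordinate direction. Fix $k\neq\ell$ and set
\[
\Psi_{k\ell}(t):=\phi(\tau_1,\dots,\tau_k+t,\dots,\tau_\ell-t,\dots,\tau_n).
\]
Then $\Psi_{k\ell}$ is concave in $t$ (a restriction of a concave function to a line), and by the symmetry of $\phi$ it is invariant under $t\mapsto (\tau_\ell-\tau_k)-t$, hence attains its maximum at $t_\ast=(\tau_\ell-\tau_k)/2$. Since $\Psi_{k\ell}'(t)=\dot\phi^k-\dot\phi^\ell$ evaluated at the shifted argument, monotonicity of $\Psi_{k\ell}'$ (from concavity) gives $\Psi_{k\ell}'(0)\geq 0$ when $\tau_k\leq\tau_\ell$ and $\leq 0$ when $\tau_k\geq\tau_\ell$, which is precisely the stated inequality at $t=0$.

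The only non-routine step is the concavity of $\phi$; the rest is a standard unwinding of concavity, $1$-homogeneity and symmetry. I expect the concavity to be the main obstacle only to the extent that one wishes to give a self-contained proof rather than cite Garding/Andrews-type references; the cleanest self-contained argument is through the duality above, using that $\psi=E_k^{1/k}$ is inverse-concave.
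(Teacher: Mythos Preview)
Your proposal is correct. Each of the three claims is established cleanly: concavity via the duality $\phi(\tau)=\psi(\tau^{-1})^{-1}$ with $\psi=E_k^{1/k}$ inverse-concave; the trace bound $\sum_k\dot\phi^k\geq 1$ via the support-line inequality for a concave $1$-homogeneous function normalized at $\mathbf 1$; and the Schur-type sign condition via the symmetry-plus-concavity argument along the line $t\mapsto \tau+t(e_k-e_\ell)$.

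As for comparison: the paper does not prove this lemma at all. It presents it as a ``review'' of basic properties of symmetric functions and refers the reader to \cite{And-MZ13} (Andrews--McCoy--Zheng) for these facts. Your argument is therefore more detailed than what the paper supplies, and is precisely the standard route one finds in that reference: concavity of the dual from inverse-concavity of $E_k^{1/k}$, Euler's identity combined with the tangent-plane inequality for the trace bound, and symmetry of $\phi$ for the monotonicity of $\dot\phi^k$ in $\tau_k$. There is nothing to correct.
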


Since the initial hypersurface $M_0$ is strictly convex,  it can be shown that the solution to the flow \eqref{s3:flow-2} is given, up to a time-dependent diffeomorphism,  by solving the scalar parabolic partial differential equation on the Wulff shape $\Sigma_F$
 \begin{equation}\label{s6:flow-gauss}
   \frac{\partial }{\partial t}s(z,t)=1-\frac{s(z,t)}{\Phi(\tau_{ij}[s(z,t)])}
 \end{equation}
for the anisotropic support function $s$ of $M_t$. In the rest of this section, we derive the evolution equations of $s, |\bar{\nabla}s|^2$ and $\tau_{ij}[s]$ along the flow \eqref{s6:flow-gauss}.
\begin{lem}
The evolution equation \eqref{s6:flow-gauss} for the anisotropic support function $s$ has the following equivalent form:
\begin{align}\label{s6:s-pde}
 & \frac{\partial }{\partial t}s- s\Phi^{-2}\dot{\Phi}^{k\ell}\left(\bar{\nabla}_k\bar{\nabla}_\ell s-\frac 12Q_{k\ell p}\bar{\nabla}_ps\right)= (1-s{\Phi}^{-1})^2+s^2\Phi^{-2}\left(\sum_k\dot{\phi}^k-1\right).
\end{align}
\end{lem}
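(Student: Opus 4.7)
The identity is purely algebraic once one unpacks the definitions, so the plan is to express $\dot{\Phi}^{k\ell}\bigl(\bar{\nabla}_k\bar{\nabla}_\ell s-\tfrac12 Q_{k\ell p}\bar{\nabla}_p s\bigr)$ in closed form using the $1$-homogeneity of $\Phi$, and then simply substitute $\partial_t s=1-s\Phi^{-1}$ and simplify.

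First I would observe that $\phi(\tau)=(E_n(\tau)/E_{n-k}(\tau))^{1/k}$ is $1$-homogeneous of degree $1$ in $\tau$, because $E_n$ and $E_{n-k}$ are homogeneous of degrees $n$ and $n-k$ respectively. Consequently $\Phi$ is $1$-homogeneous in the matrix argument, and Euler's identity gives
\begin{equation*}
\dot{\Phi}^{k\ell}\,\tau_{k\ell}[s]=\Phi.
\end{equation*}
Plugging in the definition \eqref{s6:tau-def} of $\tau_{k\ell}[s]$ yields
\begin{equation*}
\dot{\Phi}^{k\ell}\bigl(\bar{\nabla}_k\bar{\nabla}_\ell s-\tfrac12 Q_{k\ell p}\bar{\nabla}_p s\bigr)=\Phi-s\,\dot{\Phi}^{k\ell}\bar{g}_{k\ell}=\Phi-s\sum_k\dot\phi^k,
\end{equation*}
where in the last step I use the standard identity \eqref{s5:phi-d} relating $\dot{\Phi}^{k\ell}$ to $\dot{\phi}^k$ at a diagonalized point (noting both sides are tensorial, so this holds pointwise on $\Sigma_F$).

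Multiplying the displayed identity by $s\Phi^{-2}$ and subtracting from $\partial_t s=1-s\Phi^{-1}$, I obtain
\begin{equation*}
\frac{\partial s}{\partial t}-s\Phi^{-2}\dot{\Phi}^{k\ell}\bigl(\bar{\nabla}_k\bar{\nabla}_\ell s-\tfrac12 Q_{k\ell p}\bar{\nabla}_p s\bigr)=1-2s\Phi^{-1}+s^2\Phi^{-2}\sum_k\dot\phi^k.
\end{equation*}
The final step is purely algebraic: expanding $(1-s\Phi^{-1})^2=1-2s\Phi^{-1}+s^2\Phi^{-2}$ shows the right-hand side above equals $(1-s\Phi^{-1})^2+s^2\Phi^{-2}(\sum_k\dot\phi^k-1)$, which is \eqref{s6:s-pde}.

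There is no real obstacle here; the only thing to be careful about is tracking the $1$-homogeneity of $\Phi$ (versus other natural normalizations one might use) and the factor $\tfrac12$ coming from the $Q$ term in the definition of $\tau_{ij}[s]$. Since the formula is derived pointwise and is tensorial on $\Sigma_F$, no further diffeomorphism or parametrization issues arise.
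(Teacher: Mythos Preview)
Your proof is correct and follows essentially the same approach as the paper: use the $1$-homogeneity of $\Phi$ (Euler's identity) together with the definition \eqref{s6:tau-def} to compute $\dot{\Phi}^{k\ell}(\bar{\nabla}_k\bar{\nabla}_\ell s-\tfrac12 Q_{k\ell p}\bar{\nabla}_p s)=\Phi-s\,\dot{\Phi}^{k\ell}\bar{g}_{k\ell}$, substitute into $\partial_t s=1-s\Phi^{-1}$, and complete the square. The paper's proof is line-for-line the same argument.
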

\begin{proof}
Since $\Phi$ is homogeneous of degree $1$ with respect to $\tau_{ij}$, we have
\begin{equation*}
\Phi= \dot{\Phi}^{k\ell}\tau_{k\ell}= \dot{\Phi}^{k\ell}\left(\bar{\nabla}_k\bar{\nabla}_\ell s+\bar{g}_{k\ell}s-\frac 12Q_{k\ell p}\bar{\nabla}_ps\right).
\end{equation*}
This implies that
\begin{align*}
\frac{\partial }{\partial t}s&- s\Phi^{-2}\dot{\Phi}^{k\ell}\left(\bar{\nabla}_k\bar{\nabla}_\ell s-\frac 12Q_{k\ell p}\bar{\nabla}_ps\right) \\
 = & 1-2 s{\Phi}^{-1}+s^2\Phi^{-2}\dot{\Phi}^{k\ell}\bar{g}_{k\ell}\\
 =& (1-s{\Phi}^{-1})^2+s^2\Phi^{-2}\left(\dot{\Phi}^{k\ell}\bar{g}_{k\ell}-1\right).
\end{align*}
Then \eqref{s6:s-pde} follows from the relation \eqref{s5:phi-p1}.
\end{proof}

\begin{lem}
Under the flow \eqref{s6:flow-gauss}, the squared norm $|\bar{\nabla}s|^2$ of the gradient of the anisotropic support function $s$ evolves according to
\begin{align}\label{s6:ds-pde}
 & \frac{\partial }{\partial t} |\bar{\nabla}s|^2 -s\Phi^{-2}\dot{\Phi}^{k\ell}\left(\bar{\nabla}_k\bar{\nabla}_\ell|\bar{\nabla}s|^2 -\frac 12Q_{k\ell p}\bar{\nabla}_p|\bar{\nabla}s|^2\right)\nonumber\\
 = & ~ -2|\bar{\nabla}s|^2\Phi^{-1}+s\Phi^{-2}\dot{\Phi}^{k\ell}\biggl(-2\tau_{ik}\tau_{i\ell}-2s^2\bar{g}_{k\ell}+4s\tau_{k\ell}-2\tau_{ik}Q_{i\ell p}s_p +2sQ_{k\ell p}s_p\biggr)\nonumber\\
 &\quad +s\Phi^{-2}\dot{\Phi}^{k\ell}\biggl(2s_ks_\ell -\frac 12(2Q_{i\ell m}Q_{mkp}-Q_{ipm}Q_{mk\ell})s_is_p-\bar{\nabla}_iQ_{k\ell p}s_ps_i\biggr).
\end{align}
\end{lem}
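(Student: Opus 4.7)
The plan is to differentiate \eqref{s6:flow-gauss} once spatially, pair the result with $2s^i$ (noting that $\bar{g}$ on the Wulff shape is time-independent), and then reorganize the outcome into the parabolic form \eqref{s6:ds-pde}. Using $\bar{\nabla}_i\Phi=\dot{\Phi}^{k\ell}\bar{\nabla}_i\tau_{k\ell}$ together with $\partial_t s=1-s\Phi^{-1}$, one immediately obtains
\begin{equation*}
\partial_t|\bar{\nabla}s|^2=-2\Phi^{-1}|\bar{\nabla}s|^2+2s\Phi^{-2}\dot{\Phi}^{k\ell}s^i\bar{\nabla}_i\tau_{k\ell},
\end{equation*}
which already accounts for the first term $-2|\bar{\nabla}s|^2\Phi^{-1}$ on the right of \eqref{s6:ds-pde}.

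The next step is to expand the linearized operator via $\bar{\nabla}_k\bar{\nabla}_\ell|\bar{\nabla}s|^2=2(\bar{\nabla}_ks^i)(\bar{\nabla}_\ell s_i)+2s^i\bar{\nabla}_k\bar{\nabla}_\ell s_i$, and eliminate every Hessian of $s$ through the identity $s_{ij}=\tau_{ij}-\bar{g}_{ij}s+\tfrac12Q_{ijp}s^p$ read off from \eqref{s6:tau-def}; differentiating this identity once replaces $\bar{\nabla}_ks_{i\ell}$ by $\bar{\nabla}_k\tau_{i\ell}$ plus an explicit polynomial in $s$, $\bar{\nabla}s$, $Q$ and $\bar{\nabla}Q$. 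The Codazzi identity \eqref{s6:Codaz} then converts $s^i\bar{\nabla}_i\tau_{k\ell}$ into $s^i\bar{\nabla}_k\tau_{i\ell}$ modulo the remainder $\tfrac12 s^i(Q_{i\ell p}\tau_{kp}-Q_{k\ell p}\tau_{ip})$, after which the two third-derivative contributions match with opposite signs and cancel exactly. The role of the drift $-\tfrac12 s\Phi^{-2}\dot{\Phi}^{k\ell}Q_{k\ell p}\bar{\nabla}_p|\bar{\nabla}s|^2$ on the left of \eqref{s6:ds-pde} is precisely to kill the leftover $Q_{k\ell p}s^i\tau_{ip}$ produced by the Codazzi swap, once $\bar{\nabla}_p|\bar{\nabla}s|^2=2s^is_{ip}$ is itself re-expanded through $s_{ip}=\tau_{ip}-\bar{g}_{ip}s+\tfrac12Q_{ipq}s^q$.

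With the third-derivative terms eliminated, the remaining algebra identifies the right-hand side of \eqref{s6:ds-pde} directly. Expanding $(\bar{\nabla}_ks^i)(\bar{\nabla}_\ell s_i)=s_k{}^is_{i\ell}$ via $s_{ij}=\tau_{ij}-\bar{g}_{ij}s+\tfrac12Q_{ijp}s^p$ produces the leading contributions $\tau_{ik}\tau_{i\ell}-2s\tau_{k\ell}+s^2\bar{g}_{k\ell}$, yielding $-2\tau_{ik}\tau_{i\ell}$, $+4s\tau_{k\ell}$, $-2s^2\bar{g}_{k\ell}$ after multiplication by $-2s\Phi^{-2}\dot{\Phi}^{k\ell}$; the piece $-\bar{g}_{i\ell}s_k$ inside $\bar{\nabla}_ks_{i\ell}$ provides the $2s_ks_\ell$ term; the four-index symmetry of $\bar{\nabla}_mQ_{ijp}$ on the Wulff shape (recalled in Section \ref{sec2}) brings the $\bar{\nabla}Q$ contribution into the form $-\bar{\nabla}_iQ_{k\ell p}s_ps_i$; the linear-in-$\tau$ cross terms combine, after using $\dot{\Phi}^{k\ell}=\dot{\Phi}^{\ell k}$ and the total symmetry of $Q$, into $-2\tau_{ik}Q_{i\ell p}s_p+2sQ_{k\ell p}s_p$; and the purely $Q\cdot Q$ quadratic-in-$\bar{\nabla}s$ contractions assemble into $-\tfrac12(2Q_{i\ell m}Q_{mkp}-Q_{ipm}Q_{mk\ell})s_is_p$.

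The main obstacle is pure bookkeeping: each substitution $s_{ij}\leftrightarrow\tau_{ij}+\cdots$ generates several $Q$-, $Q\cdot Q$- and $\bar{\nabla}Q$-contractions, and one must relabel indices repeatedly and invoke the symmetries $\dot{\Phi}^{k\ell}=\dot{\Phi}^{\ell k}$, the total symmetry of $Q$, and the four-index symmetry of $\bar{\nabla}_mQ_{ijp}$ on $\Sigma_F$ to collapse them into the compact form \eqref{s6:ds-pde}. No identity beyond the Codazzi equation \eqref{s6:Codaz} and the definition \eqref{s6:tau-def} is needed at this stage; the Simons-type identity \eqref{s6:Simons} will be reserved for the curvature bound in Section \ref{sec:C2}.
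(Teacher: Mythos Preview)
Your proposal is correct and follows essentially the same approach as the paper. The only organizational difference is that the paper expands $\bar{\nabla}_i\tau_{k\ell}$ back into third derivatives of $s$ and commutes $\bar{\nabla}_k\bar{\nabla}_\ell\bar{\nabla}_is$ to $\bar{\nabla}_i\bar{\nabla}_k\bar{\nabla}_\ell s$ via the Ricci identity and the Gauss equation \eqref{s2:gauss-2}, whereas you keep everything at the level of $\bar{\nabla}\tau$ and invoke the Codazzi identity \eqref{s6:Codaz} instead; since \eqref{s6:Codaz} is itself derived from Ricci plus \eqref{s2:gauss-2}, the two computations are the same in content.
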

\begin{proof}
The evolution of $|\bar{\nabla}s|^2$ along the equation \eqref{s6:flow-gauss} can be computed as follows:
\begin{align*}
  \frac{\partial }{\partial t} |\bar{\nabla}s|^2=&~ 2s_i\bar{\nabla}_i(\partial_ts) =  2s_i\bar{\nabla}_i(1-s\Phi^{-1})\\
  =&~-2|\bar{\nabla}s|^2\Phi^{-1}+2s\Phi^{-2}s_i\dot{\Phi}^{k\ell}\bar{\nabla}_i\tau_{k\ell}\\
  =&~-2|\bar{\nabla}s|^2\Phi^{-1}+2s\Phi^{-2}s_i\dot{\Phi}^{k\ell}\biggl(\bar{\nabla}_i\bar{\nabla}_k\bar{\nabla}_\ell s+\bar{\nabla}_is\bar{g}_{k\ell}\\
  &\quad -\frac 12Q_{k\ell p}\bar{\nabla}_i\bar{\nabla}_ps-\frac 12\bar{\nabla}_iQ_{k\ell p}\bar{\nabla}_ps\biggr).
\end{align*}
By the Ricci identity and \eqref{s2:gauss-2}, we get
\begin{align*}
  \bar{\nabla}_k\bar{\nabla}_\ell|\bar{\nabla}s|^2 =& 2s_{ik}s_{i\ell}+2s_i\bar{\nabla}_k\bar{\nabla}_\ell\bar{\nabla}_is \\
  =&2s_{ik}s_{i\ell}+2s_i(\bar{\nabla}_i\bar{\nabla}_k\bar{\nabla}_\ell s+\bar{R}_{ki\ell p}\bar{\nabla}_ps) \\
  =& 2s_{ik}s_{i\ell}+2s_i\bar{\nabla}_i\bar{\nabla}_k\bar{\nabla}_\ell s+2|\bar{\nabla}s|^2\bar{g}_{k\ell}-2s_ks_\ell \\
  &\quad +\frac 12(Q_{i\ell m}Q_{mkp}-Q_{ipm}Q_{mk\ell})s_is_p.
\end{align*}
Combining the above two equations yields
\begin{align*}
 & \frac{\partial }{\partial t} |\bar{\nabla}s|^2 -s\Phi^{-2}\dot{\Phi}^{k\ell}\left(\bar{\nabla}_k\bar{\nabla}_\ell|\bar{\nabla}s|^2 -\frac 12Q_{k\ell p}\bar{\nabla}_p|\bar{\nabla}s|^2\right)\\
 =&~-2|\bar{\nabla}s|^2\Phi^{-1}+s\Phi^{-2}\dot{\Phi}^{k\ell}\biggl(-2s_{ik}s_{i\ell}+2s_ks_\ell\\
  &\quad -\frac 12(Q_{i\ell m}Q_{mkp}-Q_{ipm}Q_{mk\ell})s_is_p-\bar{\nabla}_iQ_{k\ell p}s_ps_i\biggr)\\
= & ~ -2|\bar{\nabla}s|^2\Phi^{-1}+s\Phi^{-2}\dot{\Phi}^{k\ell}\biggl(-2\tau_{ik}\tau_{i\ell}-2s^2\bar{g}_{k\ell}+4s\tau_{k\ell}-2\tau_{ik}Q_{i\ell p}s_p +2sQ_{k\ell p}s_p\biggr)\\
 &\quad +s\Phi^{-2}\dot{\Phi}^{k\ell}\biggl(2s_ks_\ell -\frac 12(2Q_{i\ell m}Q_{mkp}-Q_{ipm}Q_{mk\ell})s_is_p-\bar{\nabla}_iQ_{k\ell p}s_ps_i\biggr).
\end{align*}
\end{proof}

\begin{lem}\label{s6:lem-tau}
Under the flow \eqref{s6:flow-gauss}, the tensor $\tau_{ij}[s]$ evolves by
\begin{align}\label{s6:flow-tau}
 & \frac{\partial}{\partial t}\tau_{ij}-s\Phi^{-2}\dot{\Phi}^{k\ell} \left(\bar{\nabla}_k\bar{\nabla}_\ell\tau_{ij}-\frac 12Q_{k\ell p}\bar{\nabla}_p\tau_{ij}\right)\nonumber\\
 =& s\Phi^{-2}\ddot{\Phi}^{k\ell,pq}\bar{\nabla}_i\tau_{k\ell}\bar{\nabla}_j\tau_{pq}+s\Phi^{-2}\dot{\Phi}*Q*Q*\tau+s\Phi^{-2}\dot{\Phi}*\bar{\nabla}Q*\tau\nonumber\\ & -\left(\Phi^{-1}+s\Phi^{-2}\dot{\Phi}^{k\ell}\bar{g}_{k\ell}\right)\tau_{ij}+\bar{g}_{ij}(1+s\Phi^{-1})\nonumber\\
&\quad +2\Phi^{-2}\bar{\nabla}_is\bar{\nabla}_j\Phi-2s\Phi^{-3}\bar{\nabla}_i\Phi\bar{\nabla}_j\Phi.
\end{align}
\end{lem}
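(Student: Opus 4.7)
The plan is to differentiate the definition \eqref{s6:tau-def} of $\tau_{ij}$ in time and substitute the scalar PDE \eqref{s6:flow-gauss}. Since $\bar{\nabla}$ and $\bar{g}_{ij}$ are $t$-independent,
\begin{equation*}
\frac{\partial}{\partial t}\tau_{ij}=\bar{\nabla}_i\bar{\nabla}_j(\partial_ts)+\bar{g}_{ij}\,\partial_ts-\tfrac{1}{2}Q_{ijp}\bar{\nabla}_p(\partial_ts),
\end{equation*}
so the computation reduces to expanding the Hessian and gradient of $\partial_ts=1-s\Phi^{-1}$. Applying the product rule twice gives
\begin{equation*}
\bar{\nabla}_i\bar{\nabla}_j(s\Phi^{-1})=\Phi^{-1}\bar{\nabla}_i\bar{\nabla}_js-\Phi^{-2}\bigl(\bar{\nabla}_is\,\bar{\nabla}_j\Phi+\bar{\nabla}_js\,\bar{\nabla}_i\Phi\bigr)+2s\Phi^{-3}\bar{\nabla}_i\Phi\,\bar{\nabla}_j\Phi-s\Phi^{-2}\bar{\nabla}_i\bar{\nabla}_j\Phi,
\end{equation*}
and the chain rule on $\Phi(\tau_{k\ell})$ produces
\begin{equation*}
\bar{\nabla}_i\bar{\nabla}_j\Phi=\ddot{\Phi}^{k\ell,pq}\bar{\nabla}_i\tau_{k\ell}\,\bar{\nabla}_j\tau_{pq}+\dot{\Phi}^{k\ell}\bar{\nabla}_i\bar{\nabla}_j\tau_{k\ell}.
\end{equation*}
This immediately contributes the $\ddot{\Phi}$--term in \eqref{s6:flow-tau} together with the two gradient--gradient cross terms $2\Phi^{-2}\bar{\nabla}_is\,\bar{\nabla}_j\Phi-2s\Phi^{-3}\bar{\nabla}_i\Phi\,\bar{\nabla}_j\Phi$.

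The crux is that the resulting second--derivative expression $\dot{\Phi}^{k\ell}\bar{\nabla}_i\bar{\nabla}_j\tau_{k\ell}$ has its Hessian taken in the \emph{wrong} pair of indices: we need $\dot{\Phi}^{k\ell}\bar{\nabla}_k\bar{\nabla}_\ell\tau_{ij}$ to get the elliptic operator appearing on the left-hand side of \eqref{s6:flow-tau}. Here I would invoke the Simons-type identity \eqref{s6:Simons} of Lemma \ref{s6:lem1}; contracting it with $\dot{\Phi}^{k\ell}$ (which is symmetric in $k\ell$) yields
\begin{align*}
\dot{\Phi}^{k\ell}\bar{\nabla}_{(i}\bar{\nabla}_{j)}\tau_{k\ell}=&\,\dot{\Phi}^{k\ell}\bar{\nabla}_{(k}\bar{\nabla}_{\ell)}\tau_{ij}+\tfrac{1}{2}\dot{\Phi}^{k\ell}Q_{ijq}\bar{\nabla}_q\tau_{k\ell}-\tfrac{1}{2}\dot{\Phi}^{k\ell}Q_{k\ell p}\bar{\nabla}_p\tau_{ij}\\
&+\bar{g}_{ij}\dot{\Phi}^{k\ell}\tau_{k\ell}-\bigl(\dot{\Phi}^{k\ell}\bar{g}_{k\ell}\bigr)\tau_{ij}+\dot{\Phi}*Q*Q*\tau+\dot{\Phi}*\bar{\nabla}Q*\tau.
\end{align*}
Using the $1$-homogeneity of $\Phi$ in the form $\dot{\Phi}^{k\ell}\tau_{k\ell}=\Phi$ turns the fourth term into $\bar{g}_{ij}\Phi$, which after multiplication by $s\Phi^{-2}$ contributes $s\Phi^{-1}\bar{g}_{ij}$, combining with the $\bar{g}_{ij}\partial_ts=\bar{g}_{ij}(1-s\Phi^{-1})$ from the first step to produce the clean coefficient $\bar{g}_{ij}(1+s\Phi^{-1})$ appearing in \eqref{s6:flow-tau} after a sign check (the Simons piece enters with $+s\Phi^{-2}\dot{\Phi}^{k\ell}$ and $\partial_ts$ with $+1$, so the signs actually have to be tracked; the $-\Phi^{-1}\bar{\nabla}_i\bar{\nabla}_js$ piece contributes through $\bar{\nabla}_i\bar{\nabla}_js=\tau_{ij}-\bar{g}_{ij}s+\tfrac{1}{2}Q_{ijp}\bar{\nabla}_ps$).

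Finally I collect: the term $-s\Phi^{-2}\dot{\Phi}^{k\ell}\bar{\nabla}_k\bar{\nabla}_\ell\tau_{ij}$ and the transport piece $+\tfrac{1}{2}s\Phi^{-2}\dot{\Phi}^{k\ell}Q_{k\ell p}\bar{\nabla}_p\tau_{ij}$ (the latter also gets a matching contribution from $-\tfrac{1}{2}Q_{ijp}\bar{\nabla}_p(\partial_ts)$ that has to be tracked) move to the left-hand side; the $\tfrac{1}{2}s\Phi^{-2}\dot{\Phi}^{k\ell}Q_{ijq}\bar{\nabla}_q\tau_{k\ell}$ term together with the $\dot{\Phi}^{k\ell}$--contracted index swaps from $\bar{\nabla}_p(\partial_ts)$ will be absorbed into the schematic $s\Phi^{-2}\dot{\Phi}*Q*Q*\tau$ after using $\bar{\nabla}\tau\sim \tau Q+\bar{\nabla}Q\cdot s+\dots$ coming from Codazzi \eqref{s6:Codaz}; the linear $\tau_{ij}$ coefficient $-(\Phi^{-1}+s\Phi^{-2}\dot{\Phi}^{k\ell}\bar{g}_{k\ell})$ is assembled from the $-\Phi^{-1}\tau_{ij}$ piece and the Simons $-\dot{\Phi}^{k\ell}\bar{g}_{k\ell}\,\tau_{ij}$ piece. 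I expect the main obstacle to be \emph{purely bookkeeping}: correctly symmetrising in $(i,j)$, ensuring that every $Q$-contraction that does not fit one of the explicit named terms (Hessian, transport, linear in $\tau_{ij}$, and $\bar g_{ij}$) actually has the schematic shape $\dot{\Phi}*Q*Q*\tau$ or $\dot{\Phi}*\bar{\nabla}Q*\tau$ (which it does, because after the Simons swap the only ``dangerous'' pieces come from either two powers of $Q$ or one derivative of $Q$, each multiplied by one power of $\tau$), and tracking the signs carefully so that the final expression matches the stated identity \eqref{s6:flow-tau}.
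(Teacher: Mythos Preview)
Your strategy is exactly the paper's: differentiate $\tau_{ij}[s]$ in $t$, expand the Hessian and gradient of $s\Phi^{-1}$, and then invoke the Simons identity \eqref{s6:Simons} to swap the pair $(i,j)$ with $(k,\ell)$ in $\dot\Phi^{k\ell}\bar\nabla_i\bar\nabla_j\tau_{k\ell}$.

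The one place your bookkeeping plan would actually fail is the handling of the two first-order $Q$-terms carrying the indices $i,j$. From $-\tfrac12 Q_{ijp}\bar\nabla_p(\partial_t s)$ you obtain, after writing $\bar\nabla_p(s\Phi^{-1})=\Phi^{-1}\bar\nabla_ps-s\Phi^{-2}\bar\nabla_p\Phi$, the two pieces $\tfrac12\Phi^{-1}Q_{ijp}\bar\nabla_ps$ and $-\tfrac12 s\Phi^{-2}Q_{ijp}\bar\nabla_p\Phi$. The first has free indices $i,j$ (not $k,\ell$), so it does \emph{not} feed the transport term $\dot\Phi^{k\ell}Q_{k\ell p}\bar\nabla_p\tau_{ij}$ as your parenthetical suggests; instead it combines with $-\Phi^{-1}\bar\nabla_i\bar\nabla_js$ and $-s\Phi^{-1}\bar g_{ij}$ to reconstitute $-\Phi^{-1}\tau_{ij}$ via the definition \eqref{s6:tau-def}. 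The second piece cancels \emph{exactly} against the Simons by-product $\tfrac12 s\Phi^{-2}\dot\Phi^{k\ell}Q_{ijq}\bar\nabla_q\tau_{k\ell}=\tfrac12 s\Phi^{-2}Q_{ijq}\bar\nabla_q\Phi$. You cannot instead absorb the latter into the schematic $\dot\Phi*Q*Q*\tau$ via Codazzi as you propose: \eqref{s6:Codaz} only controls the antisymmetric combination $\bar\nabla_j\tau_{k\ell}-\bar\nabla_k\tau_{j\ell}$ and gives no identity of the form ``$\bar\nabla\tau\sim Q*\tau$''. Once you use this direct cancellation, every remaining term is already one of the explicit terms in \eqref{s6:flow-tau}, and no appeal to Codazzi is needed anywhere in the proof.
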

\begin{proof}
Taking the time derivative of \eqref{s6:tau-def} and using \eqref{s6:flow-gauss}, we have
\begin{align}\label{s6:tau-1}
  \frac{\partial}{\partial t}\tau_{ij}=&\bar{\nabla}_i\bar{\nabla}_j \frac{\partial}{\partial t}s+\bar{g}_{ij} \frac{\partial}{\partial t}s-\frac 12Q_{ijk}\bar{\nabla}_k \frac{\partial}{\partial t}s\nonumber\\
  =&-\bar{\nabla}_i\bar{\nabla}_j\frac{s}{\Phi}+\bar{g}_{ij}(1-\frac{s}{\Phi})+\frac 12Q_{ijk}\bar{\nabla}_k\frac{s}{\Phi}\nonumber\\
=& -\Phi^{-1}\bar{\nabla}_i\bar{\nabla}_js+s\Phi^{-2}\bar{\nabla}_i\bar{\nabla}_j\Phi+2\Phi^{-2}\bar{\nabla}_is\bar{\nabla}_j\Phi-2s\Phi^{-3}\bar{\nabla}_i\Phi\bar{\nabla}_j\Phi\nonumber\\
&\quad +\bar{g}_{ij}(1-\frac{s}{\Phi})+\frac 12Q_{ijk}\left(\Phi^{-1}\bar{\nabla}_ks-s\Phi^{-2}\bar{\nabla}_k\Phi\right)\nonumber\\
=& -\Phi^{-1}\tau_{ij}+s\Phi^{-2}\bar{\nabla}_i\bar{\nabla}_j\Phi+2\Phi^{-2}\bar{\nabla}_is\bar{\nabla}_j\Phi-2s\Phi^{-3}\bar{\nabla}_i\Phi\bar{\nabla}_j\Phi\nonumber\\
&\quad +\bar{g}_{ij}-\frac 12s\Phi^{-2}Q_{ijk}\bar{\nabla}_k\Phi,
\end{align}
where  in the last equality we used the definition \eqref{s6:tau-def} of $\tau_{ij}$ again. Using the Simons identity \eqref{s6:Simons}, the second term on the right-hand side of \eqref{s6:tau-1} can be expressed as
\begin{align}\label{s6:tau-2}
 \bar{\nabla}_i\bar{\nabla}_j\Phi=& \dot{\Phi}^{k\ell}\bar{\nabla}_i\bar{\nabla}_j\tau_{k\ell}+\ddot{\Phi}^{k\ell,pq}\bar{\nabla}_i\tau_{k\ell}\bar{\nabla}_j\tau_{pq}\nonumber\\
 =&\dot{\Phi}^{k\ell} \bar{\nabla}_k\bar{\nabla}_\ell\tau_{ij}+\frac 12Q_{jiq}\bar{\nabla}_q\Phi-\frac 12\dot{\Phi}^{k\ell}Q_{k\ell p}\bar{\nabla}_p\tau_{ij}+\Phi\bar{g}_{ij}-\dot{\Phi}^{k\ell}\bar{g}_{k\ell}\tau_{ij} \nonumber\\
 & +\dot{\Phi}*Q*Q*\tau+\dot{\Phi}*\bar{\nabla}Q*\tau+\ddot{\Phi}^{k\ell,pq}\bar{\nabla}_i\tau_{k\ell}\bar{\nabla}_j\tau_{pq}.
\end{align}
Substituting \eqref{s6:tau-2} into \eqref{s6:tau-1} and rearranging the terms, we obtain \eqref{s6:flow-tau}.
\end{proof}

\section{Curvature estimate}\label{sec:C2}

In this section, we derive uniform upper and lower bounds on the anisotropic principal curvatures. We first show that the function $E_k(\kappa)$ is uniformly bounded from above.
\begin{prop}\label{s7:lem-Psi}
Assume that the initial hypersurface $M_0$ is smooth, closed and strictly convex.  Then along the flow \eqref{flow-1} we have a uniform upper bound on $E_{k}(\kappa)$.
\end{prop}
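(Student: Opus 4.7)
The plan is to apply the parabolic maximum principle directly to the scalar evolution equation \eqref{s3:evl-Psi} for $\Psi = E_k^{1/k}(\kappa)$ itself. Short-time existence guarantees that $M_t$ stays smooth and strictly convex on some time interval, so the anisotropic principal curvatures remain in the positive cone $\Gamma_+$ where all of the structural properties of $E_k^{1/k}$ that I need are available.

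The algebraic inputs are three. First, $\sigma_F > 0$ on $M_t$, which was already established in Lemma \ref{s4:lem-sigm}. Second, $\dot{\Psi}^{ij}$ is positive semi-definite, because $E_k^{1/k}$ is monotone increasing in each argument on $\Gamma_+$ and the matrix of first derivatives is diagonal (with positive diagonal entries) in a frame that diagonalizes $\hat{h}_i^j$, as recorded in \eqref{s5:phi-d}. Third, and crucially, the inverse-concavity of $E_k^{1/k}$ on $\Gamma_+$ gives the pointwise bound \eqref{s4.psiij}:
$$\dot{\Psi}^{ij}\hat{h}_i^k\hat{h}_k^j \;\geq\; \Psi^2.$$

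With these in hand, I evaluate \eqref{s3:evl-Psi} at an interior spatial maximum of $\Psi$ at a fixed time $t$. There $\hat{\nabla}\Psi = 0$, which annihilates all three gradient terms in the evolution equation, namely the drift $\sigma_F\dot{\Psi}^{ij}A_{qip}\hat{g}^{jp}\hat{\nabla}^q\Psi$, the cross term $2\dot{\Psi}^{ij}\hat{\nabla}_i\sigma_F\hat{\nabla}^j\Psi$, and the transport term $\Psi\hat{\nabla}^k\Psi\,G(\nu_F)(X,\partial_kX)$. The remaining second-order term $\sigma_F\dot{\Psi}^{ij}\hat{\nabla}^j\hat{\nabla}_i\Psi$ is non-positive because $\sigma_F > 0$, $\dot{\Psi}^{ij}$ is PSD, and the Hessian of $\Psi$ is negative semi-definite at the maximum. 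The zeroth-order reaction $\Psi^2 - \dot{\Psi}^{ij}\hat{h}_i^k\hat{h}_k^j$ is non-positive by the inverse-concavity inequality above. Hence $\partial_t \Psi \leq 0$ at the maximum, and the standard parabolic maximum principle yields $\max_{M_t}\Psi \leq \max_{M_0}\Psi$, so that $E_k(\kappa) \leq \bigl(\max_{M_0} E_k^{1/k}\bigr)^k$ uniformly in time.

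I do not expect a real obstacle for this particular proposition: the three structural ingredients are already secured, and \eqref{s3:evl-Psi} has precisely the sign pattern that turns $\Psi$ into a subsolution of a degenerate-parabolic PDE at its spatial maxima. The genuine difficulty, postponed to the rest of Section \ref{sec:C2}, is the matching lower bound on the anisotropic principal curvatures (equivalently, the upper bound on the $\tau_i$), for which a single scalar maximum principle is insufficient and one must work with the full tensor $\tau_{ij}[s]$ together with an auxiliary function involving $s$ and $|\bar{\nabla}s|$, as advertised in the introduction.
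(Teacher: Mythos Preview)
Your argument is correct and matches the paper's own proof essentially verbatim: both use the inverse-concavity inequality \eqref{s4.psiij} to show that the reaction term $\Psi^2 - \dot{\Psi}^{ij}\hat{h}_i^k\hat{h}_k^j$ in \eqref{s3:evl-Psi} is non-positive, and then apply the parabolic maximum principle to conclude that $\max_{M_t}\Psi$ is non-increasing. The paper packages this as the differential inequality \eqref{s7:evl-Psi}, while you evaluate at a spatial maximum, but these are the same argument.
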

\begin{proof}
Since $\Psi={E_{k}^{1/k}(\kappa)}$ is inverse-concave, we have $\Psi^2\leq \dot{\Psi}^{ij}\hat{h}_i^k\hat{h}_k^j$. So the evolution equation \eqref{s3:evl-Psi} of $\Psi$ satisfies
\begin{align}\label{s7:evl-Psi}
  \frac{\partial}{\partial t}\Psi\leq  & ~ \sigma_F\dot{\Psi}^{ij}\left(\hat{\nabla}^j\hat{\nabla}_i\Psi+A_{piq}\hat{g}^{jq}\hat{\nabla}^p\Psi\right)+2\dot{\Psi}^{ij}\hat{\nabla}_i\sigma_F\hat{\nabla}^j\Psi\nonumber\\
  &\quad +\Psi\hat{\nabla}^k\Psi G(\nu_F)(X,\partial_kX).
\end{align}
Then the parabolic maximum principle implies that  $\max_{M_t}\Psi$ is non-increasing in time. Therefore $\max_{M_t}\Psi\leq \max_{M_0}\Psi$.
\end{proof}

Next, we derive the uniform positive lower bound on $\kappa_i$ of $M_t$ for $t>0$.
\begin{prop}\label{s7:prop-tau}
Along the flow \eqref{flow-1}, the anisotropic principal curvatures $\kappa_i$ of the solution $M_t$ have a uniform positive lower bound for $t>0$.
\end{prop}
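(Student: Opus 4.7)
Since $\kappa_i = 1/\tau_i$, the claim is equivalent to showing a uniform upper bound on the largest eigenvalue $\tau_{\max}$ of the tensor $\tau_{ij}[s]$ on the Wulff shape. The plan is to work in the anisotropic Gauss map parametrization with the scalar equation \eqref{s6:flow-gauss}, and to apply the parabolic maximum principle to an auxiliary function of the form
\begin{equation*}
W(z,t) \;=\; \log \max_{|\xi|_{\bar g}=1}\tau_{ij}\xi^i\xi^j \;-\; A\log s \;+\; B\,|\bar\nabla s|^2,
\end{equation*}
with constants $A,B>0$ to be chosen. At a first interior space-time maximum $(z_0,t_0)$ with $t_0>0$, I would introduce normal coordinates on $\Sigma_F$ at $z_0$ diagonalizing $\tau_{ij}$, pick the eigenvector $\xi$ of $\tau_{\max}$, and work with the smooth quantity $\tau_{\xi\xi}$ to avoid the multiplicity issue. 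The already established $C^0$ and $C^1$ bounds give $0<c_0\le s\le C_0$ and $|\bar\nabla s|\le C_1$, and Proposition~\ref{s7:lem-Psi} together with Lemma~\ref{s4:lem-sigm} yields $\Phi=s/\Psi\ge c_0/\max_{M_0}\Psi>0$, keeping $\Phi^{-1}$ bounded throughout.

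Next I would combine the evolution equations \eqref{s6:flow-tau}, \eqref{s6:ds-pde} and \eqref{s6:s-pde} at $(z_0,t_0)$ to derive the inequality $0\le (\partial_t - s\Phi^{-2}\dot\Phi^{k\ell}\bar\nabla_k\bar\nabla_\ell)W$ and extract a bound on $\tau_{\max}$. The single most useful sign comes from the concavity of $\phi=(E_n/E_{n-k})^{1/k}$ (dual to the inverse-concavity of $\Psi=E_k^{1/k}$), which makes the second-variation term $s\Phi^{-2}\ddot\Phi^{k\ell,pq}\bar\nabla_\xi\tau_{k\ell}\bar\nabla_\xi\tau_{pq}$ in \eqref{s6:flow-tau} nonpositive, and by \eqref{s5:phi-d2} also provides a nonpositive contribution $2\sum_{k<\ell}(\dot\phi^k-\dot\phi^\ell)/(\tau_k-\tau_\ell)(\bar\nabla_\xi\tau_{k\ell})^2$ that will be used to kill off-diagonal gradient terms. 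The role of $-A\log s$ in $W$ is to generate, via $\partial_t s=1-s/\Phi$ and \eqref{s5:phi-p1}, a term proportional to $A s\Phi^{-2}\dot\Phi^{k\ell}\bar g_{k\ell}\geq A s\Phi^{-2}$, which (after enlarging $A$) dominates the zeroth order term $\bar g_{\xi\xi}(1+s\Phi^{-1})$ in \eqref{s6:flow-tau} that grows linearly in $\tau_{\max}$; the role of $B|\bar\nabla s|^2$ is to absorb the mixed gradient terms of the form $\Phi^{-2}\bar\nabla_\xi s\,\bar\nabla_\xi\Phi$ that appear in \eqref{s6:flow-tau}. The critical point condition $\bar\nabla_i W=0$ converts $\bar\nabla_i\tau_{\xi\xi}$ into $A\tau_{\xi\xi}\bar\nabla_i\log s - B\tau_{\xi\xi}\bar\nabla_i|\bar\nabla s|^2$, which, together with the homogeneity relation $\Phi=\dot\Phi^{k\ell}\tau_{k\ell}$, lets one eliminate $\bar\nabla\Phi$ from the bad terms in favor of $\bar\nabla s$, already controlled by the $C^1$ estimate.

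The main obstacle I expect is handling the anisotropic ``error'' terms $s\Phi^{-2}\dot\Phi\ast Q\ast Q\ast\tau$ and $s\Phi^{-2}\dot\Phi\ast\bar\nabla Q\ast\tau$ in \eqref{s6:flow-tau}, together with the commutator produced by the Codazzi identity \eqref{s6:Codaz} when one applies $\bar\nabla_\xi\bar\nabla_\xi$ to extract $\bar\nabla_k\bar\nabla_\ell\tau_{\xi\xi}$ from $\bar\nabla_\xi\bar\nabla_\xi\tau_{k\ell}$ via the Simons-type identity \eqref{s6:Simons}. In the isotropic case these terms vanish, so this is where the anisotropy genuinely bites. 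This is exactly the point at which the Guan--Shi--Sui observation from \cite{Guanbo15} is invoked: it says that for a concave symmetric $\phi$ with $\dot\phi^i>0$, at a critical point of the largest eigenvalue one has pointwise inequalities of the type $\sum_k\dot\phi^k(\tau_\xi-\tau_k)\cdot(\text{sign factor})\ge 0$, which, combined with the off-diagonal negative term in \eqref{s5:phi-d2} and the total symmetry of $Q$ and $\bar\nabla Q$ on $\Sigma_F$, allow one to bound these $Q$-induced terms by $C(1+\tau_{\max})+\varepsilon\tau_{\max}^2/\Phi^{-2}$. Choosing $A$ much larger than $B^{-1},\varepsilon^{-1}$ and $B\ge B(F,C_0,C_1)$, the differential inequality at $(z_0,t_0)$ becomes $0\le C-c\,\tau_{\max}(z_0,t_0)$, yielding $\tau_{\max}(z_0,t_0)\le C$ and hence a uniform upper bound on $W$, which gives the required positive lower bound on $\kappa_i=1/\tau_i$ for all $t\in[0,T)$.
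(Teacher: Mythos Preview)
Your framework matches the paper's --- anisotropic Gauss map parametrization, an auxiliary function built from $\log\tau_{\max}$, a term in $s$, and $|\bar\nabla s|^2$, the concavity of $\phi$, and the Guan--Shi--Sui lemma --- but the roles you assign to the ingredients are misidentified in ways that leave a genuine gap.

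The $Q$-induced error terms are not the subtle point: since $Q,\bar\nabla Q$ are bounded tensors on $\Sigma_F$, those terms are trivially $\le Cs\Phi^{-2}(\sum_k\dot\phi^k)\tau_{11}$, and after dividing by $\tau_{11}$ they become $Cs\Phi^{-2}\sum_k\dot\phi^k$. The actual difficulty is that $\sum_k\dot\phi^k$ may blow up as $\tau$ approaches $\partial\Gamma_+$, and \emph{this} is what Guan--Shi--Sui addresses --- not the $Q$-terms. Its content is not a sign inequality of the form you wrote; rather, when the normal $\nu_\tau$ to the level set of $\phi$ is far from the diagonal direction $\nu_\mu$ with $\mu=r(1,\dots,1)$, it gives $\sum_k\dot\phi^k-1\ge(\theta/r)(\sum_k\dot\phi^k+1)$, which makes the term $-\alpha s^2\Phi^{-2}(\sum_k\dot\phi^k-1)$ arising from \eqref{s6:s-pde} (the paper uses $-\alpha s$, not $-A\log s$) strong enough to absorb the bad $C\sum_k\dot\phi^k$; in the complementary case all $\dot\phi^i$ are comparable and one argues directly. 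You have also misassigned the purpose of $\beta|\bar\nabla s|^2$: the mixed gradient $2\Phi^{-2}\bar\nabla_1 s\,\bar\nabla_1\Phi$ is handled by completing the square against $-2s\Phi^{-3}|\bar\nabla_1\Phi|^2$; the real contribution of $|\bar\nabla s|^2$ is, via the Hessian piece $-2\dot\Phi^{k\ell}s_{ik}s_{i\ell}$ in \eqref{s6:ds-pde}, the quadratic good term $-2\beta s\Phi^{-2}\sum_k\dot\phi^k\tau_k^2$. It is this $\tau_k^2$ term, not a linear $-c\,\tau_{\max}$, that closes the estimate --- which is also why one must choose $\beta$ small first (so that the term $C\beta^2\dot\phi^1\tau_1^2$ produced by the critical-point substitution is beaten by $\beta\dot\phi^1\tau_1^2$) and then $\alpha$ large, the opposite order to what you propose. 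Finally, the zeroth-order term $(1+s\Phi^{-1})$ does not grow in $\tau_{\max}$ (you already noted $\Phi^{-1}$ is bounded); it is cancelled outright by $-(\Phi^{-1}+s\Phi^{-2}\sum_k\dot\phi^k)\tau_{11}$ using $\tau_{11}\ge\Phi$ and $\sum_k\dot\phi^k\ge1$.
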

\begin{proof}
To prove this estimate, we employ the anisotropic Gauss map parametrization of the flow \eqref{flow-1} as described in \S \ref{sec:Gauss}. The flow \eqref{flow-1} is equivalent to the following scalar parabolic equation of the anisotropic support function $s(z,t)$:
  \begin{equation}\label{s7:flow-s}
   \frac{\partial }{\partial t}s(z,t)=1-\frac{s(z,t)}{\Phi(\tau_{ij})},\quad (z,t)\in \Sigma_F\times[0,T),
 \end{equation}
where
\begin{equation*}
  \Phi(\tau_{ij})=\left(\frac{E_{n}(\tau)}{E_{n-k}(\tau)}\right)^{1/k}
\end{equation*}
is the dual function of $\Psi$ and $\tau_{ij}=\tau_{ij}[s]$ is the matrix defined in \eqref{s6:tau-def}. The eigenvalues $\tau=(\tau_1,\cdots,\tau_n)$ of $\tau_{ij}$ are equal to the reciprocal of the anisotropic principal curvatures $\kappa=(\kappa_1,\cdots,\kappa_n)$. Therefore, in order to estimate the lower bound of $\kappa$, it suffices to estimate the upper bound of the eigenvalues of $\tau_{ij}$.

We choose the orthonormal frame such that $(\tau_{ij})$ is diagonal at the point we are considering. Suppose that $e_1$ is the direction where the largest eigenvalue of $(\tau_{ij})$ occurs. By the evolution equation \eqref{s6:flow-tau} of $\tau_{ij}$, we have
\begin{align}\label{s7:flow-tau2}
  &\frac{\partial}{\partial t}\tau_{11}- s\Phi^{-2}\left(\dot{\Phi}^{k\ell} \bar{\nabla}_k\bar{\nabla}_\ell\tau_{11}-\frac 12\dot{\Phi}^{k\ell}Q_{k\ell p}\bar{\nabla}_p\tau_{11}\right)\nonumber\\
\leq & ~ s\Phi^{-2}\ddot{\Phi}^{k\ell,pq}\bar{\nabla}_1\tau_{k\ell}\bar{\nabla}_1\tau_{pq}+Cs\Phi^{-2}\dot{\Phi}^{k\ell}\bar{g}_{k\ell}\tau_{11}\nonumber\\
 &-\left(\Phi^{-1}+s\Phi^{-2}\dot{\Phi}^{k\ell}\bar{g}_{k\ell}\right)\tau_{11} +(1+s\Phi^{-1})\nonumber\\
 & +2\Phi^{-2}\bar{\nabla}_1s\bar{\nabla}_1\Phi-2s\Phi^{-3}|\bar{\nabla}_1\Phi|^2,
\end{align}
where $C=C(Q,\bar{\nabla}Q)$ is a constant depending only on $Q$ and $\bar{\nabla}Q$.  Since $\phi(\tau)$ is increasing in each $\tau_i$, we have
\begin{equation}\label{s6.1}
  \tau_{11}=\tau_{11}\phi(1)\geq \phi(\tau)=\Phi(\tau_{ij}).
\end{equation}
Using the fact
\begin{equation}\label{s6.2}
  \dot{\Phi}^{k\ell}\bar{g}_{k\ell}=\sum_k\dot{\phi}^k\geq 1,
\end{equation}
we see that $(1+s\Phi^{-1})$ can be cancelled out by $-\left(\Phi^{-1}+s\Phi^{-2}\dot{\Phi}^{k\ell}\bar{g}_{k\ell}\right)\tau_{11}$. Therefore, the second line on the right-hand side of \eqref{s7:flow-tau2} is non-positive. On the other hand, the last line of \eqref{s7:flow-tau2} can be estimated as
\begin{align}\label{s6.3}
 &2\Phi^{-2}\bar{\nabla}_1s\bar{\nabla}_1\Phi-2s\Phi^{-3}|\bar{\nabla}_1\Phi|^2\nonumber\\
 =& -2s\Phi^{-3}\left(\bar{\nabla}_1\Phi-\frac{1}{2}\Phi\bar{\nabla}_1\log s\right)^2
 +\frac 1{2s\Phi}|\bar{\nabla}_1s|^2\nonumber\\
\leq  &\frac 1{2s\Phi}|\bar{\nabla}_1s|^2.
\end{align}
By the $C^0$ and $C^1$ estimates of $s$ and the facts \eqref{s6.1}--\eqref{s6.2}, the last term of \eqref{s6.3} can be absorbed by the term $Cs\Phi^{-2}\dot{\Phi}^{k\ell}\bar{g}_{k\ell}\tau_{11}$, where $C$ depends on $F$ and $M_0$. Therefore, we arrive at
\begin{align}\label{s7:flow-tau3}
  \frac{\partial}{\partial t}\tau_{11}\leq & s\Phi^{-2}\left(\dot{\Phi}^{k\ell} \bar{\nabla}_k\bar{\nabla}_\ell\tau_{11}-\frac 12\dot{\Phi}^{k\ell}Q_{k\ell p}\bar{\nabla}_p\tau_{11}\right)\nonumber\\
 &+s\Phi^{-2}\ddot{\Phi}^{k\ell,pq}\bar{\nabla}_1\tau_{k\ell}\bar{\nabla}_1\tau_{pq}+ Cs\Phi^{-2}\left(\sum_k\dot{\phi}^k\right)\tau_{11},
\end{align}
where $C$ depends on $F$ and $M_0$. Since $\sum_k\dot{\phi}^k$ may not be bounded, we can not apply the parabolic maximum principle directly to conclude that $\tau_{11}$ is bounded from above on a finite time interval.

To overcome this problem, we define
\begin{equation*}
  \zeta= \sup\{\tau_{ij}\xi^i\xi^j:~|\xi|=1\}
\end{equation*}
and consider the auxiliary function
\begin{equation}\label{s6:omeg-def}
  \omega(z,t) =\log\zeta(z,t)-\alpha s(z,t)+\beta |\bar{\nabla}s|^2
\end{equation}
on $\Sigma_F\times [0,T)$, where $\alpha, \beta>0$ are positive constants to be determined later. We consider a point $(z_1,t_1)$ where a new maximum of the function $\omega$ is achieved, i.e.,  $\omega(z_1,t_1)=\max_{\Sigma_F\times [0,t_1]}\omega$ and $\omega(z,t)<\omega(z_1,t_1)$ for $t<t_1$. By rotation of local orthonormal frame, we assume that $\xi=e_1$ and $(\tau_{ij})=\mathrm{diag}(\tau_1,\cdots,\tau_n)$ is diagonal at $(z_1,t_1)$. Firstly, at $(z_1,t_1)$ we have $\zeta=\tau_{11}$ and
\begin{align*}
  \frac{\partial }{\partial t}\omega =& \frac 1{\tau_{11}}\frac{\partial }{\partial t}\tau_{11} -\alpha \frac{\partial }{\partial t}s+\beta\frac{\partial }{\partial t}|\bar{\nabla}s|^2,\\
  \bar{\nabla}_\ell\omega =& \frac 1{\tau_{11}}\bar{\nabla}_\ell\tau_{11}-\alpha \bar{\nabla}_\ell s+\beta\bar{\nabla}_\ell|\bar{\nabla}s|^2,\\
  \bar{\nabla}_k\bar{\nabla}_\ell\omega =&\frac 1{\tau_{11}}\bar{\nabla}_k\bar{\nabla}_\ell\tau_{11}-\alpha \bar{\nabla}_k\bar{\nabla}_\ell s+\beta\bar{\nabla}_k\bar{\nabla}_\ell|\bar{\nabla}s|^2-\frac 1{(\tau_{11})^2}\bar{\nabla}_k\tau_{11}\bar{\nabla}_\ell\tau_{11}.
\end{align*}
Combining these equations with \eqref{s6:s-pde}, \eqref{s6:ds-pde} and \eqref{s7:flow-tau3} gives
\begin{align}\label{s7:evl-S}
   &\frac{\partial }{\partial t}\omega -s\Phi^{-2}\dot{\Phi}^{k\ell}\left(\bar{\nabla}_k\bar{\nabla}_\ell\omega-\frac 12Q_{k\ell p}\bar{\nabla}_p\omega\right)\nonumber\\
   =&\frac 1{\tau_{11}}\left(\frac{\partial }{\partial t}\tau_{11} -s\Phi^{-2}\dot{\Phi}^{k\ell}\left(\bar{\nabla}_k\bar{\nabla}_\ell\tau_{11}-\frac 12Q_{k\ell p}\bar{\nabla}_p\tau_{11}\right)\right)\nonumber\\
   & -\alpha \left(\frac{\partial }{\partial t}s -s\Phi^{-2}\dot{\Phi}^{k\ell}\left(\bar{\nabla}_k\bar{\nabla}_\ell s-\frac 12Q_{k\ell p}\bar{\nabla}_ps\right)\right)\nonumber\\
   &+\beta \left(\frac{\partial }{\partial t}|\bar{\nabla}s|^2 -s\Phi^{-2}\dot{\Phi}^{k\ell}\left(\bar{\nabla}_k\bar{\nabla}_\ell|\bar{\nabla}s|^2-\frac 12Q_{k\ell p}\bar{\nabla}_p|\bar{\nabla}s|^2\right)\right)\nonumber\\
   &+\frac 1{(\tau_{11})^2}s\Phi^{-2}\dot{\Phi}^{k\ell}\bar{\nabla}_k\tau_{11}\bar{\nabla}_\ell\tau_{11}\nonumber\\
  \leq &\frac {s\Phi^{-2}}{\tau_{11}}\left(\ddot{\Phi}^{k\ell,pq}\bar{\nabla}_1\tau_{k\ell}\bar{\nabla}_1\tau_{pq}+C\sum_k\dot{\phi}^{k}\tau_{11}\right)\nonumber\\
  &-\alpha\left(1- s{\Phi}^{-1}\right)^2-\alpha s^2\Phi^{-2}\left(\sum_k\dot{\phi}^{k}-1\right) +\frac 1{(\tau_{11})^2}s\Phi^{-2}\dot{\Phi}^{k\ell}\bar{\nabla}_k\tau_{11}\bar{\nabla}_\ell\tau_{11}\nonumber\\
  &+\beta\biggl(-2|\bar{\nabla}s|^2\Phi^{-1}+s\Phi^{-2}\dot{\Phi}^{k\ell}\biggl(-2\tau_{ik}\tau_{i\ell}-2s^2\bar{g}_{k\ell}+4s\tau_{k\ell}-2\tau_{ik}Q_{i\ell p}s_p +2sQ_{k\ell p}s_p\biggr)\nonumber\\
 &\quad +s\Phi^{-2}\dot{\Phi}^{k\ell}\biggl(2s_ks_\ell -\frac 12(2Q_{i\ell m}Q_{mkp}-Q_{ipm}Q_{mk\ell})s_is_p-\bar{\nabla}_iQ_{k\ell p}s_ps_i\biggr)\biggr)\nonumber\\
 \leq & s\Phi^{-2}\left(\frac 1{\tau_{11}}\ddot{\Phi}^{k\ell,pq}\bar{\nabla}_1\tau_{k\ell}\bar{\nabla}_1\tau_{pq}+\frac 1{(\tau_{11})^2}\dot{\Phi}^{k\ell}\bar{\nabla}_k\tau_{11}\bar{\nabla}_\ell\tau_{11}\right)\nonumber\\
  &-\alpha\left(1- s{\Phi}^{-1}\right)^2-\alpha s^2\Phi^{-2}\left(\sum_k\dot{\phi}^{k}-1\right) -2\beta s\Phi^{-2}\sum_k\dot{\phi}^k\tau_k^2\nonumber\\
  &+C(\beta+1) s\Phi^{-2}\sum_k\dot{\phi}^{k}+C\beta s\Phi^{-1},
\end{align}
where we used the $C^0$ and $C^1$ estimates of $s$ in the last inequality, and the constants $C$ in the last line of \eqref{s7:evl-S} depend on $F$ and the initial hypersurface $M_0$. We next apply the maximum principle to \eqref{s7:evl-S} to obtain a uniform upper bound on $\tau_{11}$. Note that $\tau_{11}=\tau_1$ is the largest eigenvalue of $(\tau_{ij})$ at the point $(z_1,t_1)$.

We first estimate the gradient terms in \eqref{s7:evl-S}.  By \eqref{s5:phi-d2} and \eqref{s5:phi-p2}, we have
\begin{align}\label{s5:LTE1}
  \ddot{\Phi}^{k\ell,pq}\bar{\nabla}_1\tau_{k\ell}\bar{\nabla}_1\tau_{pq}= &~\ddot{\phi}^{k\ell}\bar{\nabla}_1\tau_{kk}\bar{\nabla}_1\tau_{\ell\ell}+2\sum_{k>\ell}\frac{\dot{\phi}^k-\dot{\phi}^\ell}{\tau_k-\tau_\ell}(\bar{\nabla}_1\tau_{k\ell})^2\nonumber\\
   \leq &~ 2\sum_{k>1}\frac{\dot{\phi}^k-\dot{\phi}^1}{\tau_k-\tau_1}(\bar{\nabla}_1\tau_{k1})^2\nonumber\\ \leq &~ -2\sum_{k>1}\frac 1{\tau_1}(\dot{\phi}^k-\dot{\phi}^1)(\bar{\nabla}_1\tau_{k1})^2.
\end{align}
The Codazzi equation \eqref{s6:Codaz} implies that
\begin{align}\label{s5:LTE2}
  (\bar{\nabla}_1\tau_{k1})^2= & \left( \bar{\nabla}_k\tau_{11}+\frac 12Q_{11p}\tau_{kp}-\frac 12Q_{k1p}\tau_{1p}\right)^2
  \geq   \frac 12(\bar{\nabla}_k\tau_{11})^2-C(\tau_{11})^2.
\end{align}
Substituting \eqref{s5:LTE2} into \eqref{s5:LTE1}, and noting that
\begin{equation*}
  \frac 1{\tau_{11}}\bar{\nabla}_k\tau_{11}=\alpha\bar{\nabla}_ks-\beta \bar{\nabla}_k|\bar{\nabla}s|^2, \quad k=1,\cdots,n
\end{equation*}
holds at $(z_1,t_1)$,   we have
\begin{align*}
   & \frac 1{\tau_{11}}\ddot{\Phi}^{k\ell,pq}\bar{\nabla}_1\tau_{k\ell}\bar{\nabla}_1\tau_{pq}+\frac 1{(\tau_{11})^2}\sum_k\dot{\phi}^k(\bar{\nabla}_k\tau_{11})^2\nonumber\\
   \leq &~-2\sum_{k>1}\frac 1{\tau_{11}^{2}}(\dot{\phi}^k-\dot{\phi}^1)\left(\frac 12(\bar{\nabla}_k\tau_{11})^2-C(\tau_{11})^2\right)+\frac 1{(\tau_{11})^2}\sum_k\dot{\phi}^k(\bar{\nabla}_k\tau_{11})^2\nonumber\\
= &~\dot{\phi}^1\tau_{11}^{-2}\sum_k(\bar{\nabla}_k\tau_{11})^2+2C\sum_{k>1}(\dot{\phi}^k-\dot{\phi}^1)\nonumber\\
   = &~\dot{\phi}^1\sum_k\left(\alpha\bar{\nabla}_ks-\beta \bar{\nabla}_k|\bar{\nabla}s|^2\right)^2+2C\sum_{k>1}(\dot{\phi}^k-\dot{\phi}^1)
\end{align*}
at $(z_1,t_1)$. By the definition of $\tau_{ij}$ together with the $C^0$ and $C^1$ estimates of $s$, we have
\begin{equation*}
  \left(\alpha\bar{\nabla}_ks-\beta \bar{\nabla}_k|\bar{\nabla}s|^2\right)^2\leq C\left(\alpha^2+\beta^2(\tau_1^2+1)\right),\quad k=1,\cdots,n
\end{equation*}
for some constant $C$ depending only on $M_0$ and $F$. In summary, the gradient terms on the right-hand side of \eqref{s7:evl-S} satisfies
\begin{align}\label{s6:gradt}
& s\Phi^{-2}\left(\frac 1{\tau_{11}}\ddot{\Phi}^{k\ell,pq}\bar{\nabla}_1\tau_{k\ell}\bar{\nabla}_1\tau_{pq}+\frac 1{(\tau_{11})^2}\dot{\Phi}^{k\ell}\bar{\nabla}_k\tau_{11}\bar{\nabla}_\ell\tau_{11}\right)\nonumber\\
 \leq  &  s\Phi^{-2}\left(C\dot{\phi}^1\left(\alpha^2+\beta^2(\tau_1^2+1)\right)+C\sum_k\dot{\phi}^k\right)
\end{align}
at $(z_1,t_1)$.

Substituting \eqref{s6:gradt} into \eqref{s7:evl-S}, and applying the maximum principle, we have
\begin{align*}
0\leq &~s\Phi^{-2}\left(C\dot{\phi}^1\left(\alpha^2+\beta^2(\tau_1^2+1)\right)+C\sum_k\dot{\phi}^k\right)\nonumber\\
 &-\alpha\left(1- s{\Phi}^{-1}\right)^2-\alpha s^2\Phi^{-2}\left(\sum_k\dot{\phi}^{k}-1\right) -2\beta s\Phi^{-2}\sum_k\dot{\phi}^k\tau_k^2\nonumber\\
  &+C(\beta+1) s\Phi^{-2}\sum_k\dot{\phi}^{k}+C\beta s\Phi^{-1}
\end{align*}
at $(z_1,t_1)$. Multiplying by $s^{-1}\Phi^2$ the above inequality, rearranging the terms and using $\Phi\leq \tau_{11}$, we have
\begin{align*}
   & 2\beta \sum_k\dot{\phi}^k\tau_k^2+\alpha s\left(\frac{\Phi}s-1\right)^2+\alpha s\left(\sum_k\dot{\phi}^{k}-1\right) \\
  \leq & C\dot{\phi}^1\left(\alpha^2+\beta^2(\tau_1^2+1)\right)+C(\beta+1)\sum_k\dot{\phi}^k+C\beta\Phi
\end{align*}
at $(z_1,t_1)$. Since $0<r\leq s(z,t)\leq R$ for any $(z,t)\in \Sigma_F\times [0,T)$, we have
\begin{align}\label{s6:tau-pf1}
   & 2\beta \sum_k\dot{\phi}^k\tau_k^2+\alpha r\left(\frac{\Phi}s-1\right)^2+\alpha r\left(\sum_k\dot{\phi}^{k}-1\right) \nonumber\\
  \leq & C\beta^2\dot{\phi}^1\tau_1^2+C\dot{\phi}^1\left(\alpha^2+\beta^2\right)+C(\beta+1)\sum_k\dot{\phi}^k+C\beta\Phi
\end{align}
at $(z_1,t_1)$.

Now, we apply an observation in \cite{Guanbo15} and \cite{Xia17} which relies on the property that $\phi$ is increasing and concave in $\Gamma_+$, and vanishes on $\partial\Gamma_+$. Precisely, for $\phi=\left({E_{n}}/{E_{n-k}}\right)^{1/k}$ we have the following.
\begin{prop}[Lemma 2.2 in \cite{Guanbo15}]\label{s7:prop-Guan}
Let $K\subset \Gamma_+$ be a compact set and $\gamma>0$. Then there exists a constant $\theta>0$ depending only on $K$ and $\gamma$ such that for $\tau\in \Gamma_+$ and $\mu\in K$ satisfying $|\nu_{\tau}-\nu_\mu|\geq \gamma$, we have
\begin{equation*}
  \sum_{k}\dot{\phi}^k(\tau)\mu_k-\phi(\mu)\geq \theta \left(\sum_k\dot{\phi}^k(\tau)+1\right),
\end{equation*}
where $\nu_\tau$ denotes the unit normal vector of the level set $\{x\in \Gamma_+:~\phi(x)=\phi(\tau)\}$ at the point $\tau$, i.e. $\nu_\tau=D\phi(\tau)/|D\phi(\tau)|$.
\end{prop}

Let $\mu=r(1,\cdots,1)\in \Gamma_+$, where $r$ is the lower bound of the anisotropic support function $s(z,t)$, and let $K=\{\mu\}$ in Proposition~\ref{s7:prop-Guan}. There exists a small constant $\gamma>0$ such that $\nu_\mu-2\gamma (1,\cdots,1)\in \Gamma_+$. We have two cases. Firstly, if the anisotropic principal radii of curvature $\tau$ at the point  $(z_1,t_1)$  satisfy $|\nu_{\tau}-\nu_\mu|\geq \gamma$, Proposition~\ref{s7:prop-Guan} implies that
\begin{equation}\label{s6:tau-pf2}
 \sum_k\dot{\phi}^k(\tau)-1\geq \frac {\theta} r \left(\sum_k\dot{\phi}^k(\tau)+1\right).
\end{equation}
Substituting \eqref{s6:tau-pf2} into \eqref{s6:tau-pf1}, we have
\begin{align*}
   & 2\beta \sum_k\dot{\phi}^k\tau_k^2+\alpha r\left(\frac{\Phi}s-1\right)^2+\alpha \theta\left(\sum_k\dot{\phi}^{k}+1\right) \\
  \leq & C\beta^2\dot{\phi}^1\tau_1^2+C\dot{\phi}^1\left(\alpha^2+\beta^2\right)+C(\beta+1)\sum_k\dot{\phi}^k+C\beta \Phi.
\end{align*}
We first choose $\beta$ small such that
\begin{equation*}
   C\beta^2\dot{\phi}^1\tau_1^2\leq \beta \sum_k\dot{\phi}^k\tau_k^2,
\end{equation*}
and then choose $\alpha$ large such that
\begin{align*}
 C(\beta+1)\sum_k\dot{\phi}^k+ C\beta\Phi \leq  & \alpha r\left(\frac{\Phi}s-1\right)^2+\alpha \theta\left(\sum_k\dot{\phi}^{k}+1\right).
\end{align*}
Note that the constants $\alpha$ and $\beta$ can  be chosen depending only on $M_0$ and $F$. Then we have
\begin{equation*}
\beta \dot{\phi}^1\tau_1^2\leq C\dot{\phi}^1\left(\alpha^2+\beta^2\right),
\end{equation*}
which gives an upper bound on $\tau_1$.

Secondly, if the anisotropic principal radii of curvature $\tau$ at the point  $(z_1,t_1)$  satisfy $|\nu_{\tau}-\nu_\mu|\leq \gamma$, it follows from $\nu_\mu-2\gamma (1,\cdots,1)\in \Gamma_+$ that $\nu_\tau-\gamma (1,\cdots,1)\in \Gamma_+$ and so $\dot{\phi}^i\geq C'(\sum_k(\dot{\phi}^k)^2)^{1/2}\geq C\sum_k\dot{\phi}^k$ at $\tau$ for some constants $C'$, $C$ and all $i=1,\cdots,n$. Thus
\begin{equation*}
  \Phi=\sum_k\dot{\phi}^k\tau_k\leq (\sum_k\dot{\phi}^k)\tau_1\leq C\dot{\phi}^1\tau_1.
\end{equation*}
Then we can discard the terms involving $\alpha$ on the left-hand side of \eqref{s6:tau-pf1} and obtain
\begin{align}\label{s6:tau-pf3}
 2\beta \dot{\phi}^1\tau_1^2\leq & C\beta^2\dot{\phi}^1\tau_1^2+C\dot{\phi}^1\left(\alpha^2+\beta^2\right)+C(\beta+1)\sum_k\dot{\phi}^k+C\beta \Phi\nonumber\\
 \leq &\dot{\phi}^1\left(C\beta^2\tau_1^2+C\left(\alpha^2+\beta^2\right)+C(\beta+1) +C\beta\tau_1\right).
\end{align}
Choosing $\beta$ small such that
\begin{equation*}
   C\beta^2\tau_1^2\leq \beta \tau_1^2
\end{equation*}
and multiplying by $1/{\dot{\phi}^1}$ both sides of \eqref{s6:tau-pf3}, we have
\begin{align}\label{s6:tau-pf4}
 \beta \tau_1^2\leq & C\left(\alpha^2+\beta^2\right)+C(\beta+1) +C\beta\tau_1.
\end{align}
This implies that $\tau_1$ is bounded from above.

Therefore, we can choose a small constant $\beta$ and a large constant $\alpha$ in the definition \eqref{s6:omeg-def} of $\omega(z,t)$ such that in both the above two cases, the largest anisotropic principal radius of curvature $\tau_1$ has a uniform upper bound at $(z_1,t_1)$. By the definition of $\omega(z,t)$ together with the $C^0$ and $C^1$ estimates, we conclude that $\tau_1$ is bounded from above uniformly along the flow \eqref{flow-1}. This completes the proof of Proposition \ref{s7:prop-tau}.
\end{proof}

Combining Proposition \ref{s7:lem-Psi} and Proposition \ref{s7:prop-tau}, we obtain the uniform curvature estimate of the solution $M_t$ to the flow \eqref{flow-1}. In fact, since
\begin{align*}
 E_k\geq &{\binom nk}^{-1}\kappa_n\cdots \kappa_{n-k+1}  \geq  {\binom nk}^{-1}\kappa_n \kappa_1^{k-1},
\end{align*}
the uniform upper bound on $E_k(\kappa)$ and the uniform positive lower bound on the anisotropic principal curvature $\kappa_i$  imply that
\begin{equation}\label{s6.curv-est}
  0<\frac 1C\leq \kappa_i\leq C,\qquad i=1,\cdots,n
\end{equation}
for some constant $C$ depending only on $F$ and $M_0$.

\section{Monotonicity of the isoperimetric ratio}\label{sec:monot}
In this section we show that the following higher order isoperimetric ratio
\begin{equation*}
  \mathcal{I}_k(\Omega,W_F)=\frac{V_{n+2-k}(\Omega, W_F)}{V_{n+1}(\Omega,W_F)^{\frac{n+2-k}{n+1}}},\quad k=2,\cdots,n
\end{equation*}
is monotone non-increasing along the flow \eqref{flow-1}. Note that the power to $V_{n+1}(\Omega,W_F)$ is chosen such that $\mathcal{I}_k(\Omega,W_F)$ is invariant under the scaling of the domain $\Omega$.
\begin{prop}
Let $\Omega_t$ be the domain enclosed by the solution $M_t$ of the flow \eqref{flow-1}. Then $ \mathcal{I}_k(\Omega_t,W_F)$ is monotone non-increasing in time $t$.
\end{prop}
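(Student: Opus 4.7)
The plan is to differentiate $\mathcal{I}_k(\Omega_t,W_F)$ directly, reduce $\frac{d}{dt}\mathcal{I}_k\leq 0$ to a single product inequality between integrals on $M_t$, and verify that inequality pointwise using the generalized Minkowski identity \eqref{eq2.Minkowski} together with the Newton--MacLaurin inequalities on $\Gamma_+$.

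First I would apply Reilly's variational formula \eqref{s3:evl-Vk} with speed $\eta=1-E_k^{1/k}\sigma_F$ and index $j=k-1$, together with \eqref{s3:Vol-1}, to record
$$\frac{d}{dt}V_{n+2-k}(\Omega_t,W_F)=(n+2-k)\int_{M_t}\eta\, E_{k-1}\,d\mu_F,\qquad \frac{d}{dt}V_{n+1}(\Omega_t,W_F)=(n+1)\int_{M_t}\eta\,d\mu_F.$$
Writing $V_{n+1}=\int_{M_t}\sigma_F\,d\mu_F$ via the first Minkowski identity and applying the quotient rule to $\mathcal{I}_k=V_{n+2-k}/V_{n+1}^{(n+2-k)/(n+1)}$, the monotonicity $\frac{d}{dt}\mathcal{I}_k\leq 0$ is equivalent to the integral inequality
\begin{equation}\label{eq:planmono}
\int_{M_t}\sigma_F\,d\mu_F\cdot\int_{M_t}\eta\, E_{k-1}\,d\mu_F\ \leq\ \int_{M_t}E_{k-1}\,d\mu_F\cdot\int_{M_t}\eta\,d\mu_F.
\end{equation}

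The next step is to substitute $\eta=1-E_k^{1/k}\sigma_F$ and apply \eqref{eq2.Minkowski} twice: use $\int E_{k-1}\,d\mu_F=\int E_k\sigma_F\,d\mu_F$ on the constant-$1$ contribution on each side, and $\int d\mu_F=\int E_1\sigma_F\,d\mu_F$ on the remaining factor on the right. After cancellations, both sides of \eqref{eq:planmono} consolidate as
$$\mathrm{LHS}=\int_{M_t}\sigma_F\,d\mu_F\cdot\int_{M_t}\bigl(E_k-E_{k-1}E_k^{1/k}\bigr)\sigma_F\,d\mu_F,$$
$$\mathrm{RHS}=\int_{M_t}E_k\sigma_F\,d\mu_F\cdot\int_{M_t}\bigl(E_1-E_k^{1/k}\bigr)\sigma_F\,d\mu_F.$$
Since $\kappa\in\Gamma_+$ throughout the flow by Proposition \ref{s7:prop-tau} and $\sigma_F>0$ by Lemma \ref{s4:lem-sigm}, the Newton--MacLaurin inequalities $E_{k-1}^{1/(k-1)}\geq E_k^{1/k}$ and $E_1\geq E_k^{1/k}$ force $E_k-E_{k-1}E_k^{1/k}\leq 0$ and $E_1-E_k^{1/k}\geq 0$ pointwise on $M_t$, so $\mathrm{LHS}\leq 0\leq \mathrm{RHS}$, with equality iff all $\kappa_i$ coincide, i.e., iff $M_t$ is a scaled Wulff shape.

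I do not anticipate a serious obstacle; after the differentiation step the argument is purely algebraic. The one point that requires care is the combinatorial matching in the second step: \eqref{eq2.Minkowski} must be applied precisely so that the surviving factor of $E_k^{1/k}$ pairs with $E_k$ on the left and with $E_1$ on the right, after which the Newton--MacLaurin chain supplies the opposite signs needed. The flow speed $1-E_k^{1/k}\sigma_F$ appears tailor-made for this pairing, which is exactly the \emph{motivation} the authors allude to in the introduction.
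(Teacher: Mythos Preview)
Your argument is correct and uses precisely the same two ingredients as the paper—the Minkowski identity \eqref{eq2.Minkowski} and the Newton--MacLaurin inequalities—so the proofs are essentially the same in content. The organization differs, however, and the paper's is more direct. Rather than differentiating the ratio via the quotient rule and then verifying the resulting product inequality \eqref{eq:planmono}, the paper simply shows the two monotonicities separately: $\frac{d}{dt}V_{n+1}(\Omega_t,W_F)\geq 0$ (rewriting $\int_{M_t}(1-E_k^{1/k}\sigma_F)\,d\mu_F=\int_{M_t}(E_1-E_k^{1/k})\sigma_F\,d\mu_F$ via Minkowski) and $\frac{d}{dt}V_{n+2-k}(\Omega_t,W_F)\leq 0$ (using $E_{k-1}E_k^{1/k}\geq E_k$ and then Minkowski again). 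Since the numerator of $\mathcal{I}_k$ is non-increasing and the denominator non-decreasing, the ratio drops. Your route through the product inequality ultimately decomposes into exactly these two sign statements (your $\mathrm{LHS}\leq 0$ is the numerator fact, your $\mathrm{RHS}\geq 0$ is the denominator fact), so nothing is gained by first passing through \eqref{eq:planmono}. A minor remark: you invoke Proposition~\ref{s7:prop-tau} for $\kappa\in\Gamma_+$, but this is not needed here—strict convexity of $M_t$ is preserved on the existence interval and is all one uses.
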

\proof
By the Minkowski identity \eqref{eq2.Minkowski} for $k=0$, we have
\begin{equation*}
  \int_{M_t}d\mu_F=\int_{M_t}E_1(\kappa)\sigma_Fd\mu_F.
\end{equation*}
Then the equation \eqref{s3:Vol-1} implies that along the flow \eqref{flow-1},  the volume of $\Omega_t$ satisfies
\begin{align}\label{s7:mon-1}
  \frac{d}{dt} \mathrm{Vol}(\Omega_t)~=&~\int_{M_t}(1-E_k^{1/k}(\kappa)\sigma_F)d\mu_F\nonumber \\
= &~\int_{M_t}(E_1(\kappa)-E_k^{1/k}(\kappa))\sigma_Fd\mu_F.
\end{align}
Since the anisotropic support function $\sigma_F$ is positive along the flow by Lemma \ref{s4:lem-sigm}, the Newton--MacLaurin inequality (see Lemma 2.5 in \cite{Guan14})
\begin{equation}\label{s7:Newt}
  E_1(\kappa)~\geq ~E_2^{1/2}(\kappa)~\geq ~ E_k^{1/k}(\kappa),\quad k=2,\cdots,n
\end{equation}
implies that the right-hand side of \eqref{s7:mon-1} is positive unless $M_t$ is  anisotropically totally umbilical.

On the other hand, by \eqref{s3:evl-Vk}
\begin{align}\label{s7:mon-2}
  \frac{d}{dt} V_{n+2-k}(\Omega_t, W_F) =& (n+2-k)\int_{M_t}E_{k-1}(\kappa)\left(1-E_k^{1/k}(\kappa)\sigma_F\right)d\mu_F\nonumber\\
  \leq &(n+2-k)\int_{M_t}\left(E_{k-1}(\kappa)-E_k(\kappa)\sigma_F\right)d\mu_F\nonumber\\
  =& 0,
\end{align}
where we used \eqref{s7:Newt} and \eqref{eq2.Minkowski} again. Combining \eqref{s7:mon-1}, \eqref{s7:mon-2} and the fact $V_{n+1}(\Omega_t,W_F)=(n+1) \mathrm{Vol}(\Omega_t)$  implies that $ \mathcal{I}_k(\Omega_t,W_F)$ is monotone non-increasing along the flow \eqref{flow-1}.
\endproof

\section{Long time existence and smooth convergence}\label{sec:LTE}

In this section we prove that the solution $M_t$ of the flow \eqref{flow-1} exists for all positive time and converges smoothly to a scaled Wulff shape centered at the origin.
\subsection{Long time existence}\label{sec:8-1}
In \S \ref{sec:C2} we have obtained  the uniform estimate
\begin{equation*}
  0<\frac 1C\leq \kappa_i\leq C,\qquad i=1,\cdots,n
\end{equation*}
on the anisotropic principal curvature $\kappa_i$ of $M_t$ for some constant $C$ depending only on $F$ and $M_0$. This together with the $C^0, C^1$ estimates in \S \ref{sec:C01} yields the $C^2$ estimate of the solution $M_t$. Since $E_k^{1/k}$ is homogeneous of degree one and concave with respect to its argument, and the anisotropic support function $\sigma_F$ is uniformly bounded from below and above by positive constants, the standard Krylov--Safonov theory and Schauder theory (see \cite{Lie96}) imply that the solution $M_t$ has uniform $C^{k,\alpha}$ estimates for all $k\geq 2$ and some $\alpha\in (0,1)$. This guarantees that the solution $M_t$ exists for all time $t\in [0, \infty)$ and has uniform regularity estimates.
\subsection{Smooth convergence}
We need to show that the solution $M_t$ converges to a scaled Wulff shape centered at the origin as the time $t\to\infty$. Along the flow \eqref{flow-1}, we know from \S \ref{sec:monot} that the volume of the enclosed domain $\Omega_t$ is monotone non-decreasing. By the $C^0$ estimate, the volume of $\Omega_t$ is bounded from above by the volume of $RW_F$, where $R$ is the constant in \S \ref{sec:C0}. Integrating \eqref{s7:mon-1} over $[0,\infty)$, we get
\begin{equation}\label{s8.1}
0\leq \int_0^{\infty}\int_{M_t}\left(E_1-E_k^{1/k}\right)\sigma_Fd\mu_F dt~\leq ~R^{n+1}|W_F|-|\Omega_0|~<\infty.
\end{equation}
By the uniform regularity estimates of $M_t$, the integrand in the time integral of \eqref{s8.1} is uniformly continuous in $t$. This implies that
\begin{equation*}
  0\leq \int_{M_t}\left(E_1-E_k^{1/k}\right)\sigma_Fd\mu_F ~\to ~ 0,\qquad \mathrm{as} ~t\to\infty.
\end{equation*}
Since $k\geq 2$, the Newton--MacLaurin inequality \eqref{s7:Newt} implies that
\begin{align*}
  0\leq &\int_{M_t}\left(E_1-E_2^{1/2}\right)\sigma_Fd\mu_F \leq ~\int_{M_t}\left(E_1-E_k^{1/k}\right)\sigma_Fd\mu_F  ~\to ~ 0,\qquad \mathrm{as} ~t\to\infty.
\end{align*}
It follows from the uniform regularity estimates, the positive bound on $\sigma_F$ and the interpolation inequalities that
\begin{align*}
  0~\leq &~E_1-E_2^{1/2} ~\to ~ 0,\quad \mathrm{uniformly~in} ~C^\infty \quad \mathrm{as} ~t\to\infty.
\end{align*}
This is equivalent to that
\begin{align}\label{s8.3}
 \sum_{k<\ell}|\kappa_k-\kappa_\ell|^2= &n^2(n-1)(E_1^2-E_2)\nonumber\\
 =&n^2(n-1)(E_1+E_2^{1/2})(E_1-E_2^{1/2})~\to ~ 0
\end{align}
uniformly in $C^\infty$ as the time $t\to\infty$, where we used the uniform positive bounds on $E_1+E_2^{1/2}$.

The uniform regularity estimates of $M_t$ imply that for any sequence of times $\{t_k\}$ tending to the infinity, there exists a subsequence of times $\{t_{k_j}\}$ such that $M_{t_{k_j}}$ converges to a limit hypersurface $M_\infty$ smoothly. Using the anisotropic Gauss parametrization
\begin{equation}\label{s8.2-s}
  \frac{\partial}{\partial t}s=1-\frac s{\Phi(\tau_{ij})},
\end{equation}
of the flow \eqref{flow-1}, we see that the spatial maximum $s_{\max}$ of the anisotropic support function $s(z,t)$ of $M_t$ is non-increasing in time, while the spatial minimum $s_{\min}$ is non-decreasing in time. Then the sub-convergence of $M_{t_k}$ to $M_\infty$ implies that $M_t$ converges to $M_{\infty}$ in $C^0$ for all time $t\to\infty$. The smooth convergence of $M_t$ to $M_\infty$ then follows from the interpolation inequality and the uniform $C^k$ estimate for all $k\geq 0$.  By \eqref{s8.3} the limit hypersurface $M_\infty$ is anisotropically totally umbilical and therefore is a scaled Wulff shape of some radius $\bar{r}$.   The limit Wulff shape must be centered at the origin, since otherwise $s_{\max}$ is strictly decreasing and $s_{\min}$ is strictly increasing, and then the flow will move the center of the Wulff shape towards the origin. Therefore, we conclude that the solution $M_t$ converges smoothly as $t\to\infty$ to the Wulff shape $\bar{r}\Sigma_F$ centered at the origin.

\subsection{Proof of Corollary \ref{s1-cor1}}
The smooth convergence of the flow \eqref{flow-1} together with the monotonicity of the isoperimetric ratio $ \mathcal{I}_k(\Omega_t,W_F)$ can be used to derive the conclusion in Corollary \ref{s1-cor1}.
In fact, for any smooth, closed and strictly convex hypersurface $M_0=\partial\Omega_0$ in $\mathbb{R}^{n+1}$, we run the flow \eqref{flow-1} starting from $M_0$. Theorem \ref{s1-thm1} implies that the solution $M_t$ converges to a scaled Wulff shape as $t\to\infty$. By the monotonicity of $ \mathcal{I}_k(\Omega_t,W_F)$, we have
\begin{align*}
  \mathcal{I}_k(\Omega_0,W_F)\geq &~\lim_{t\to\infty} \mathcal{I}_k(\Omega_t,W_F)=  \mathcal{I}_k(\bar{r}W_F,W_F)=\left((n+1)|W_F|\right)^{\frac{k-1}{n+1}}.
\end{align*}
Then the definition \eqref{s1.Ik} of $ \mathcal{I}_k(\Omega_0,W_F) $ implies that
\begin{equation}\label{s8.4}
  \int_{M_0}E_{k-2}(\kappa)d\mu_F\geq (n+1)|\Omega_0|^{\frac{n+2-k}{n+1}}|W_F|^{\frac{k-1}{n+1}},\quad k=2,\cdots,n,
\end{equation}
which is equivalent to \eqref{s1:AF} in Corollary \ref{s1-cor1}. If the equality holds in \eqref{s8.4}, the proof of the monotonicity of $ \mathcal{I}_k(\Omega_t,W_F)$ given in \S \ref{sec:monot} implies that each $M_t$ is anisotropically totally umbilical. In particular, $M_0$ is a scaled Wulff shape.

\section{Exponential convergence}\label{sec:exp}
In this section, we prove the exponential convergence of the flow \eqref{flow-1} and complete the proof of Theorem \ref{s1-thm1}.  This will be obtained by studying the linearization of \eqref{s8.2-s}.
\begin{lem}
The linearization of \eqref{s8.2-s} around $s\equiv \bar{r}$ is
\begin{equation}\label{s9.liner}
   \frac{\partial}{\partial t}h(z,t)=\frac{1}{n\bar{r}}\left(\bar{\Delta} h(z,t)-\frac{1}{2}\bar{g}^{ij}Q_{ijk}\bar{\nabla}_kh(z,t)\right).
\end{equation}
The right-hand side is a self-adjoint non-positive operator with respect to the inner product $\langle \phi,\varphi\rangle=\int_{\Sigma_F}\phi\varphi d\mu_F$.
\end{lem}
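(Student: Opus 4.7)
The plan is to first linearize the evolution equation \eqref{s8.2-s} around $s\equiv\bar{r}$, and then verify the stated self-adjointness and non-positivity via a divergence-form rewriting.

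For the linearization, note that at $s\equiv\bar{r}$ we have $\tau_{ij}[\bar{r}]=\bar{r}\bar{g}_{ij}$, so every eigenvalue $\tau_i$ equals $\bar{r}$. Since $\phi=(E_n/E_{n-k})^{1/k}$ is symmetric and $1$-homogeneous, $\Phi(\bar{r}\bar{g}_{ij})=\bar{r}$, while Euler's identity combined with symmetry forces $\dot{\phi}^i(\bar{r},\ldots,\bar{r})=1/n$ and hence $\dot{\Phi}^{ij}|_{\bar{r}\bar{g}}=\tfrac{1}{n}\bar{g}^{ij}$. Writing $s=\bar{r}+\epsilon h$, differentiating the right-hand side of \eqref{s8.2-s} at $\epsilon=0$, and using $\bar{g}^{ij}\bar{g}_{ij}=n$, one obtains
\[
\partial_t h
=-\frac{h}{\bar{r}}+\frac{1}{\bar{r}}\dot{\Phi}^{ij}\tau_{ij}[h]
=-\frac{h}{\bar{r}}+\frac{1}{n\bar{r}}\Bigl(\bar{\Delta}h+nh-\tfrac{1}{2}\bar{g}^{ij}Q_{ijk}\bar{\nabla}_k h\Bigr),
\]
and the two $h/\bar{r}$ contributions cancel, producing exactly \eqref{s9.liner}.

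For the operator properties, set $Lh:=\bar{\Delta}h-\tfrac12 w^k\bar{\nabla}_k h$ with $w_k:=\bar{g}^{ij}Q_{ijk}$, so that the right-hand side of \eqref{s9.liner} equals $Lh/(n\bar{r})$. The key observation is that $w_k$ defines a \emph{closed} $1$-form on $\Sigma_F$: by the total symmetry of $\bar{\nabla}_\ell Q_{ijk}$ in all four indices recalled in \S\ref{sec:2-2},
\[
\bar{\nabla}_\ell w_k-\bar{\nabla}_k w_\ell
=\bar{g}^{ij}\bigl(\bar{\nabla}_\ell Q_{ijk}-\bar{\nabla}_k Q_{ij\ell}\bigr)=0.
\]
Since $\Sigma_F\cong\SS^n$ is simply connected for $n\geq 2$, there exists a smooth potential $U$ on $\Sigma_F$ with $w_k=\bar{\nabla}_k U$, so that $L$ admits the divergence form
\[
L\phi=e^{U/2}\bar{\nabla}_k\bigl(e^{-U/2}\bar{\nabla}^k\phi\bigr).
\]
This operator is formally symmetric and non-positive with respect to the weighted measure $e^{-U/2}\,d\bar{\mu}$, where $d\bar{\mu}$ is the Riemannian volume of $\bar{g}$: indeed
\[
\int\phi\,L\psi\,e^{-U/2}\,d\bar{\mu}=-\int e^{-U/2}\,\bar{g}(\bar{\nabla}\phi,\bar{\nabla}\psi)\,d\bar{\mu}
\]
is symmetric in $\phi,\psi$ and non-positive for $\phi=\psi$.

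It remains to identify $e^{-U/2}\,d\bar{\mu}$ with $d\mu_F$ up to a positive constant on $\Sigma_F$. Using $d\mu_F=F(\nu)\,d\mu_{\mathrm{Eucl}}$ on $\Sigma_F$ together with the expression of $d\bar{\mu}$ as the volume form of $G|_{T\Sigma_F}$, one differentiates $\log(d\mu_F/d\bar{\mu})$ in a tangential direction and checks that its gradient equals $-\tfrac{1}{2}w_k$, which forces $d\mu_F=c\,e^{-U/2}\,d\bar{\mu}$ for some constant $c>0$. I expect this matching of measures to be the most delicate step, as it requires the precise structural interplay on the Wulff shape between $F$, the dual norm $F^0$, and the third-order tensor $Q=\tfrac{1}{2}D^3(F^0)^2$. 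Once in hand, the claimed self-adjointness and non-positivity of the linearization with respect to $\langle\cdot,\cdot\rangle_{\mu_F}$ follow immediately from the divergence form.
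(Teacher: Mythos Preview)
Your linearization computation is correct and matches the paper's proof essentially line for line: both write $s=\bar r+\varepsilon h$, use $\dot\Phi^{ij}(\bar r\bar g)=\tfrac1n\bar g^{ij}$, and observe the cancellation of the zero-order terms.

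For the operator properties the two arguments diverge. The paper does not prove self-adjointness and non-positivity at all; it simply cites \cite[Lemma~2.8]{Xia13}. You instead sketch a direct argument: you show $w_k=\bar g^{ij}Q_{ijk}$ is closed (correct, by the four-index symmetry of $\bar\nabla Q$), produce a potential $U$ on the simply connected $\Sigma_F$, and rewrite $L$ in divergence form relative to the weighted measure $e^{-U/2}d\bar\mu$. This is a clean route and is essentially how such lemmas are proved; what it buys over the citation is an explicit quadratic form $\langle\phi,L\phi\rangle=-\int e^{-U/2}|\bar\nabla\phi|^2\,d\bar\mu$.

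The genuine gap is exactly where you flag it: the identification $d\mu_F=c\,e^{-U/2}d\bar\mu$ (equivalently $\bar\nabla_k\log(d\mu_F/d\bar\mu)=-\tfrac12 w_k$) is asserted but not verified. Your integration-by-parts computation shows that self-adjointness of $L$ with respect to a measure $\rho\,d\bar\mu$ holds \emph{if and only if} $\bar\nabla_k\log\rho=-\tfrac12 w_k$, so this identity is not a technicality but the entire content of the claim. Proving it requires an honest computation relating the Euclidean area form, $F(\nu)$, and $\det\bar g$ on $\Sigma_F$ (this is carried out in \cite{Xia13}); your proposal outlines the strategy but stops short of executing it. Until that step is done, the second half of the lemma remains unproved in your write-up.
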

\proof Write $s(z,t)=\bar{r}+\varepsilon h(z,t)$ for small $\varepsilon$. We have
\begin{align*}
  \frac{d}{d\varepsilon}\bigg|_{\varepsilon=0}s(z,t) =& h(z,t), \\
   \frac{d}{d\varepsilon}\bigg|_{\varepsilon=0} \Phi(\tau_{ij}[z,t])=& \dot{\Phi}^{ij}(\bar{r}\bar{g})\tau_{ij}[h(z,t)]\\
   =&\frac{1}{n}\left(\bar{\Delta} h(z,t)+nh(z,t)-\frac{1}{2}\bar{g}^{ij}Q_{ijk}\bar{\nabla}_kh(z,t)\right).
\end{align*}
Then the linearization of \eqref{s8.2-s} is given by
 \begin{align*}
  \frac{\partial}{\partial t}h(z,t)= & -\frac{1}{\bar{r}}h(z,t)+\frac{1}{\bar{r}n}\left(\bar{\Delta} h(z,t)+nh(z,t)-\frac{1}{2}\bar{g}^{ij}Q_{ijk}\bar{\nabla}_kh(z,t)\right) \\
   = & \frac{1}{n\bar{r}}\left(\bar{\Delta} h(z,t)-\frac{1}{2}\bar{g}^{ij}Q_{ijk}\bar{\nabla}_kh(z,t)\right).
 \end{align*}
The operator on the right-hand side is self-adjoint and non-positive; see \cite[Lemma 2.8]{Xia13}.
\endproof

Let
\begin{equation}\label{s9.L}
  \mathcal{L}h(z,t)=\bar{\Delta} h(z,t)-\frac{1}{2}\bar{g}^{ij}Q_{ijk}\bar{\nabla}_kh(z,t),
\end{equation}
which is self-adjoint. Then there exists a sequence of eigenvalues $0=\lambda_0<\lambda_1\leq \cdots \leq \lambda_k\to\infty$ of the operator $\mathcal{L}$ with corresponding eigenfunctions $\phi_k$ satisfying
\begin{equation*}
  \mathcal{L}\phi_k=-\lambda_k\phi_k.
\end{equation*}
We may choose $\{\phi_k\}_{k\geq 0}$ as an orthonormal basis of $L^2(\Sigma_F)$ with respect to the inner product $\langle \phi,\varphi\rangle=\int_{\Sigma_F}\phi\varphi d\mu_F$.

Since $s(z,t)$ converges to $\bar{r}$ smoothly as $t\to\infty$, for sufficiently large time $t$ we know that $\sigma(z,t):=s(z,t)-\bar{r}$ is sufficiently small. We would like to show that $\sigma(z,t)$ converges to zero exponentially. Applying the linearization \eqref{s9.liner}, we have
\begin{align*}
  \frac{\partial}{\partial t}\sigma(z,t) =& \frac{1}{n\bar{r}}\mathcal{L} \sigma(z,t) +O(|\sigma(z,t)|_{C^2}^2).
\end{align*}
We decompose $\sigma(z,t)$ in the following form
\begin{equation*}
  \sigma(z,t)=\sum_{k=0}^\infty \varphi_k(z,t)
\end{equation*}
with $\varphi_k(z,t)=f_k(t)\phi_k(z)$ being an eigenfunction of $\mathcal{L}$ corresponding to the eigenvalue $\lambda_k$, where $f_k(t)$ is just the coefficient of the decomposition of $\sigma(z,t)$ with respect to the basis $\{\phi_k\}_{k\geq 0}$ for each fixed time $t$. Note that $\varphi_0(z,t)=f_0(t)\phi_0$ is a constant function on $\Sigma_F$ for each $t$ and satisfies
\begin{align*}
  \varphi_0=&\frac{1}{|\Sigma_F|_F }\int_{\Sigma_F}\sigma d\mu_F.
\end{align*}
Then
\begin{align}\label{s8.3-1}
  \frac{1}{2}\frac{d}{d t}\int_{\Sigma_F}\sigma(z,t)^2d\mu_F \leq & \frac{1}{n\bar{r}}\int_{\Sigma_F}\sigma(z,t)\mathcal{L} \sigma(z,t)d\mu_F +C\int_{\Sigma_F} |\sigma||\sigma(z,t)|_{C^2}^2d\mu_F\nonumber\\
  =& \frac{1}{n\bar{r}}\int_{\Sigma_F}-\sum_{k\geq 1}\lambda_k\varphi_k^2(z,t)d\mu_F+C\int_{\Sigma_F} |\sigma||\sigma(z,t)|_{C^2}^2d\mu_F\nonumber\\
  =&- \frac{1}{n\bar{r}}\int_{\Sigma_F}\lambda_1\sum_{k\geq 0}\varphi_k^2(z,t)d\mu_F+\frac{1}{n\bar{r}}\int_{\Sigma_F}\lambda_1\varphi_0^2(z,t)d\mu_F\nonumber\\
  &\quad -\frac{1}{n\bar{r}}\int_{\Sigma_F}\sum_{k\geq 2}(\lambda_k-\lambda_1)\varphi_k^2(z,t)d\mu_F+C\int_{\Sigma_F} |\sigma||\sigma(z,t)|_{C^2}^2d\mu_F\nonumber\\
  \leq & - \frac{\lambda_1}{n\bar{r}}\int_{\Sigma_F}\sigma(z,t)^2d\mu_F+\underbrace{\frac{\lambda_1}{n\bar{r}|\Sigma_F|_F}\left(\int_{\Sigma_F}\sigma(z,t)d\mu_F\right)^2}_I\nonumber\\
  &\quad +C\underbrace{\int_{\Sigma_F} |\sigma||\sigma(z,t)|_{C^2}^2d\mu_F}_{II}.
\end{align}
We will show that the terms $I, II$ decay faster than  $\int_{\Sigma_F}\sigma(z,t)^2d\mu_F$. Then the above inequality \eqref{s8.3-1} implies the exponential decay of $\int_{\Sigma_F}\sigma(z,t)^2d\mu_F$.

To estimate the term $I$, we explore the monotonicity of the volume $V_{n+1}(\Omega,W_F)$ and the mixed volume $V_{n+2-k}(\Omega,W_F)$. Under the anisotropic Gauss map parametrization,
\begin{align*}
  V_{n+1}(\Omega,W_F)= & \int_{\Sigma_F}sE_n(\tau[s])d\mu_F, \\
   V_{n+2-k}(\Omega,W_F)=& \int_{\Sigma_F}sE_{n-k+1}(\tau[s])d\mu_F,
\end{align*}
where $s=s(z)$ is the anisotropic support function of $\partial\Omega$ and $d\mu_F$ is the anisotropic area form on the Wulff shape $\Sigma_F$. By the monotonicity, we have
\begin{align}
   V_{n+1}(\Omega_t,W_F)\leq & V_{n+1}(\bar{r}W_F,W_F)=\bar{r}^{n+1}|\Sigma_F|_F,\label{s8.3-V}\\ V_{n+2-k}(\Omega_t,W_F)\geq & V_{n+2-k}(\bar{r}W_F,W_F)=\bar{r}^{n+2-k}|\Sigma_F|_F.\label{s8.3-Vk}
\end{align}
Expanding the left-hand side of \eqref{s8.3-V} and \eqref{s8.3-Vk} using the fact $\sigma(z,t)=s(z,t)-\bar{r}$ is sufficiently small, we obtain
\begin{align}\label{s8.3-Vb}
  0~\geq  & ~V_{n+1}(\Omega_t,W_F)- \bar{r}^{n+1}|\Sigma_F|_F\nonumber\\
  = & \int_{\Sigma_F}sE_n(\tau[s])d\mu_F- \bar{r}^{n+1}|\Sigma_F|_F\nonumber\\
  =& \int_{\Sigma_F} \left(\bar{r}+\sigma +O(\sigma^2)\right)\left(\bar{r}^n+ \bar{r}^{n-1}(\bar{L}\sigma+n\sigma)+\bar{r}^{n-2}O(|\sigma|_{C^2}^2)\right)d\mu_F\nonumber\\
  &\quad - \bar{r}^{n+1}|\Sigma_F|_F\nonumber\\
  =&(n+1)\bar{r}^n \int_{\Sigma_F} \sigma d\mu_F+\int_{\Sigma_F} O(|\sigma|_{C^2}^2) d\mu_F
\end{align}
and
\begin{align}\label{s8.3-Vk-b}
  0~\leq  & ~V_{n+2-k}(\Omega_t,W_F)- \bar{r}^{n+2-k}|\Sigma_F|_F\nonumber\\
  = & \int_{\Sigma_F}sE_{n-k+1}(\tau[s])d\mu_F- \bar{r}^{n+2-k}|\Sigma_F|_F\nonumber\\
  =& \int_{\Sigma_F} \left(\bar{r}+\sigma +O(\sigma^2)\right)\left(\bar{r}^{n-k+1}+\frac{n-k+1}{n}\bar{r}^{n-k}(\bar{L}\sigma+n\sigma)+\bar{r}^{n-k-1}O(|\sigma|_{C^2}^2)\right)d\mu_F\nonumber\\
  &\quad - \bar{r}^{n+2-k}|\Sigma_F|_F\nonumber\\
  =&(n-k+2)\bar{r}^{n-k+1} \int_{\Sigma_F} \sigma d\mu_F+\int_{\Sigma_F} O(|\sigma|_{C^2}^2) d\mu_F.
\end{align}
Combining \eqref{s8.3-Vb} and \eqref{s8.3-Vk-b} implies that
\begin{equation}\label{s8.3-I-est}
  \left(\int_{\Sigma_F} \sigma d\mu_F\right)^2\leq C\left(\int_{\Sigma_F}|\sigma|_{C^2}^2d\mu_F\right)^2.
\end{equation}
Then $I+II$ on the right-hand side of \eqref{s8.3-1} satisfies
\begin{equation}\label{s8.3-I-II}
  I+II\leq ~C\left(\int_{\Sigma_F}|\sigma|_{C^2}^2d\mu_F\right)^2+C\left(\int_{\Sigma_F}|\sigma|_{C^2}^4d\mu_F\right)^{1/2}\|\sigma\|_{L^2(\Sigma_F)}
\end{equation}
for some constant $C$.

We recall the following special case of Gagliardo--Nirenberg interpolation inequality; see \cite[Theorem 3.70]{Au98}.
\begin{lem}
For integers $j,k $ satisfying ${j}/{k}\leq \theta<1$, there exists a constant $C$ depending only on $j,k,\theta,n$ and the manifold $\Sigma$ such that
\begin{equation}\label{s8.3-GN}
  \|\bar{\nabla}^jf\|_{L^p(\Sigma)}\leq C \|\bar{\nabla}^kf\|_{L^2(\Sigma)}^{\theta}\|f\|_{L^2(\Sigma)}^{1-\theta}
\end{equation}
for any function $f\in W^{k,2}(\Sigma)$ with $\int_\Sigma fd\mu_F=0$, where
\begin{equation*}
  \frac{1}{p}=\frac{j}{n}+\frac{1}{2}-\frac{k}{n}\theta
\end{equation*}
for which $p$ is nonnegative.
\end{lem}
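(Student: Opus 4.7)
The plan is to deduce the claimed inequality on the compact Riemannian manifold $(\Sigma,\bar{g})$ from the classical Gagliardo--Nirenberg inequality on Euclidean space via a partition of unity, and then to absorb the resulting lower-order term using the zero-mean hypothesis. Since the statement is recalled from \cite{Au98}, my sketch follows Aubin's framework.

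First, I would take as input the Euclidean version: for $f\in C_c^\infty(\R^n)$,
\begin{equation*}
\|D^jf\|_{L^p(\R^n)} \leq C\,\|D^kf\|_{L^2(\R^n)}^{\theta}\,\|f\|_{L^2(\R^n)}^{1-\theta},
\end{equation*}
with $p$ determined by scaling via $1/p=j/n+1/2-k\theta/n$. One standard route is through Littlewood--Paley theory, viewing fractional derivatives as Fourier multipliers and interpolating between two $L^2$-based Sobolev norms; a more hands-on route interpolates between a Sobolev embedding $\|f\|_{L^{q^{*}}}\leq C\|D^{k}f\|_{L^2}$ and the trivial $L^2$ bound using H\"older. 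The constraint $j/k\leq\theta<1$ is precisely what ensures $1/p\geq 0$, keeping $p$ in the range where the intermediate embeddings are valid.

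Second, I would transfer this bound to $\Sigma$. Fix a finite atlas $\{(U_\alpha,\varphi_\alpha)\}$ and a subordinate smooth partition of unity $\{\chi_\alpha\}$, apply the Euclidean inequality to the pushforward of $\chi_\alpha f$ in each chart, and sum over $\alpha$. Compactness of $\Sigma$ makes the chart Euclidean metrics uniformly comparable to $\bar{g}$, so the constants stay uniform; the product rule on the cutoffs produces lower-order terms in $f$ and its derivatives of order $<k$, which after collecting yield the preliminary bound
\begin{equation*}
\|\bar{\nabla}^jf\|_{L^p(\Sigma)} \leq C\left(\|\bar{\nabla}^kf\|_{L^2(\Sigma)}^{\theta}+\|f\|_{L^2(\Sigma)}^{\theta}\right)\|f\|_{L^2(\Sigma)}^{1-\theta}.
\end{equation*}

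Finally, under the zero-mean assumption $\int_\Sigma f\,d\mu_F=0$, iterating the Poincar\'e inequality on $\Sigma$ (equivalently, invoking the spectral gap of the self-adjoint operator $\mathcal{L}$ of \eqref{s9.L} on the orthogonal complement of the constants) gives $\|f\|_{L^2(\Sigma)}\leq C\,\|\bar{\nabla}^{k}f\|_{L^2(\Sigma)}$, which absorbs the extraneous $\|f\|^{\theta}$ term in the previous display. The main obstacle is the bookkeeping: one must verify at every stage that the scaling identity $1/p=j/n+1/2-k\theta/n$ and the range $j/k\leq\theta<1$ remain compatible with the Sobolev and Poincar\'e embeddings actually used, and that the intermediate derivatives of order strictly between $j$ and $k$ can either be estimated directly or iteratively reduced to the two endpoint norms. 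Once these indices are kept straight, the partition-of-unity passage and the mean-zero absorption are routine.
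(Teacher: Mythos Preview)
The paper does not prove this lemma at all; it is simply recalled as a special case of the Gagliardo--Nirenberg interpolation inequality with a reference to \cite[Theorem 3.70]{Au98}, and no argument is given. Your sketch is a correct outline of the standard textbook proof (Euclidean inequality, partition-of-unity transfer to the compact manifold, then Poincar\'e to absorb the lower-order term under the zero-mean hypothesis) and in fact supplies more than the paper does; the only point worth watching is that the zero-mean condition is taken with respect to $d\mu_F$ rather than the Riemannian volume of $\bar{g}$, but since $\mathcal{L}$ is self-adjoint precisely with respect to the $d\mu_F$ inner product, its spectral gap furnishes exactly the Poincar\'e inequality you invoke.
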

Though $\int_{\Sigma_F}\sigma d\mu_F\neq 0$, by setting
\begin{equation*}
  f=\sigma-\frac{1}{|\Sigma_F|_F}\int_{\Sigma_F}\sigma d\mu_F
\end{equation*}
in \eqref{s8.3-GN}, we see that \eqref{s8.3-GN} still holds for $\sigma$.   Using the uniform bound $\|\sigma\|_{C^k(\Sigma_F)}\leq C(k)$ for all $k\geq 0$, we obtain that
\begin{align}\label{s8.3-2a}
  \left(\int_{\Sigma_F}|\sigma|_{C^2}^2d\mu_F\right)^2= & \left(\int_{\Sigma_F}\left(|\bar{\nabla}^2\sigma|^2+|\bar{\nabla}\sigma|^2+|\sigma|^2\right)d\mu_F\right)^2 \nonumber\\
  \leq  & C\left(\|\sigma\|_{L^2(\Sigma_F)}^{2(1-\frac{2}{k})}+\|\sigma\|_{L^2(\Sigma_F)}^{2(1-\frac{1}{k})}+\|\sigma\|_{L^2(\Sigma_F)}^2\right)^2\nonumber\\
  \leq & C \|\sigma\|_{L^2(\Sigma_F)}^{4(1-\frac{2}{k})}\nonumber\\
 \leq & C\|\sigma\|_{L^2(\Sigma_F)}^3
\end{align}
by choosing $k\geq 8$. Similarly,  we have
\begin{align}\label{s8.3-2b}
  \left(\int_{\Sigma_F}|\sigma|_{C^2}^4d\mu_F\right)^{1/2}\leq  & C\left(\int_{\Sigma_F}\left(|\bar{\nabla}^2\sigma|^4+|\bar{\nabla}\sigma|^4+|\sigma|^4\right)d\mu_F\right)^{1/2} \nonumber \\
  \leq& C\left(\|\sigma\|_{L^2(\Sigma_F)}^{4(1-\frac{2+n/4}{k})}+\|\sigma\|_{L^2(\Sigma_F)}^{4(1-\frac{1+n/4}{k})}+\|\sigma\|_{C^0(\Sigma_F)}^2\|\sigma\|_{L^2(\Sigma_F)}^2\right)^{1/2}\nonumber\\
  \leq &C \|\sigma\|_{C^0(\Sigma_F)}^{1/2}\|\sigma\|_{L^2(\Sigma_F)}
\end{align}
by choosing $k\geq 8+n$. Substituting \eqref{s8.3-I-II} and \eqref{s8.3-2a}, \eqref{s8.3-2b} into \eqref{s8.3-1}, we get
\begin{align}\label{s8.3-e0}
  \frac{d}{d t}\|\sigma\|_{L^2(\Sigma_F)}\leq & - \frac{\lambda_1}{n\bar{r}}\|\sigma\|_{L^2(\Sigma_F)}+C\|\sigma\|_{L^2(\Sigma_F)}^{2}+C \|\sigma\|_{C^0(\Sigma_F)}^{1/2}\|\sigma\|_{L^2(\Sigma_F)}\nonumber\\
  =& - \frac{\lambda_1}{n\bar{r}}\|\sigma\|_{L^2(\Sigma_F)}\left(1-C\|\sigma\|_{L^2(\Sigma_F)}-C \|\sigma\|_{C^0(\Sigma_F)}^{1/2}\right).
\end{align}
For sufficiently large time $t$, we have
\begin{equation*}
  1-C\|\sigma\|_{L^2(\Sigma_F)}-C \|\sigma\|_{C^0(\Sigma_F)}^{1/2}\geq 1/2.
\end{equation*}
This implies that
\begin{equation}\label{s8.3-e1}
  \|\sigma\|_{L^2(\Sigma_F)}\leq C e^{-\frac{\lambda_1}{2n\bar{r}}t}.
\end{equation}
By Sobolev Embedding Theorem,
\begin{equation*}
  \|\sigma\|_{C^0(\Sigma_F)} \leq C \|\sigma\|_{W^{j,2}(\Sigma_F)}
\end{equation*}
for $j>n/2$. Interpolation inequality \eqref{s8.3-GN} together with the exponential decay \eqref{s8.3-e1} implies that
\begin{equation}\label{s8.3-e2}
   \|\sigma\|_{C^0(\Sigma_F)} \leq C e^{-\frac{\lambda_1}{2n\bar{r}}(1-\delta)t}
\end{equation}
for any $0<\delta \ll 1$. Substituting \eqref{s8.3-e1} and \eqref{s8.3-e2} into \eqref{s8.3-e0}, we can improve the decay rate in \eqref{s8.3-e1} to any constant smaller than $\frac{\lambda_1}{n\bar{r}}$. Applying the interpolation inequality \eqref{s8.3-GN} and Sobolev Embedding Theorem we eventually obtain that
 \begin{equation}\label{s8.3-e3}
   \|\sigma\|_{C^k(\Sigma_F)} \leq C(k,\delta) e^{-\frac{\lambda_1}{n\bar{r}}(1-\delta)t}
\end{equation}
for any given $0<\delta \ll 1$, where $\lambda_1>0$ is the first non-zero eigenvalue of the operator $\mathcal{L}$. This completes the proof of Theorem \ref{s1-thm1}.

\appendix
\section{Codazzi and Simons type equations}\label{sec:ap1}
In this appendix, we provide a detailed calculation to derive the Codazzi and Simons type equations in Lemma \ref{s6:lem1}.
\begin{lem}
We have the following Codazzi and Simons type equations for $\tau_{ij}$:
\begin{equation}\label{a1:Codaz}
  \bar{\nabla}_j\tau_{k\ell}+\frac 12Q_{k\ell p}\tau_{jp}~=~\bar{\nabla}_k\tau_{j\ell}+\frac 12Q_{j\ell p}\tau_{kp}
\end{equation}
and
\begin{align}\label{a1:simon}
  \bar{\nabla}_i \bar{\nabla}_j\tau_{k\ell} =&\bar{\nabla}_k\bar{\nabla}_\ell\tau_{ij}+\frac 12Q_{ijp}\bar{\nabla}_p\tau_{k\ell}-\frac 12Q_{k\ell p}\bar{\nabla}_p\tau_{ij}+\bar{g}_{ij}\tau_{k\ell}-\bar{g}_{kj}\tau_{i\ell}\nonumber\\
 &+\bar{g}_{i\ell}\tau_{kj}-\bar{g}_{k\ell}\tau_{ij} +\frac 14\left(Q_{jkq}Q_{ipq}-Q_{kpq}Q_{qij}\right)\tau_{p\ell}\nonumber\\
&+\frac 14\left(Q_{j\ell q}Q_{ipq}-Q_{\ell pq}Q_{qij}\right)\tau_{kp}+\frac 14\left(Q_{k \ell q}Q_{ipq}-Q_{kpq}Q_{qi \ell }\right)\tau_{pj}\nonumber\\
&+\frac 14\left(Q_{k \ell q}Q_{jpq}-Q_{kpq}Q_{qj \ell }\right)\tau_{ip}+\frac 12\bar{\nabla}_pQ_{ijk}\tau_{ \ell p}+\frac 12\bar{\nabla}_pQ_{ij \ell }\tau_{kp}\nonumber\\
&-\frac 12\bar{\nabla}_pQ_{jk \ell }\tau_{ip}-\frac 12\bar{\nabla}_pQ_{ik \ell }\tau_{jp}.
\end{align}
\end{lem}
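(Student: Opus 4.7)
The plan is to derive both identities by direct computation starting from the definition $\tau_{ij}[s] = \bar\nabla_i\bar\nabla_j s + \bar g_{ij}s - \tfrac{1}{2}Q_{ijk}\bar\nabla_k s$, invoking the Ricci identity and the Gauss equation \eqref{s2:gauss-2} on the Wulff shape to handle all commutator terms, and using the total symmetry of both $Q$ and $\bar\nabla Q$ (the latter being Prop.~2.2 in \cite{Xia17-2}) to collect the remaining pieces.

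For the Codazzi identity \eqref{a1:Codaz}, I would take $\bar\nabla_j$ of the definition of $\tau_{k\ell}$, antisymmetrize in the pair $(j,k)$, and examine each term. The third derivative piece yields $(\bar\nabla_j\bar\nabla_k-\bar\nabla_k\bar\nabla_j)\bar\nabla_\ell s$, which via Ricci equals a contraction of $\bar R_{jk\ell m}$ with $\bar\nabla^m s$; expanding this through \eqref{s2:gauss-2} produces two $\bar g$-pieces (which combine with the explicit metric term $\bar g_{k\ell}\bar\nabla_j s - \bar g_{j\ell}\bar\nabla_k s$ coming from the definition of $\tau$) and two $Q*Q*\bar\nabla s$ pieces. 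The derivative terms $(\bar\nabla_j Q_{k\ell p} - \bar\nabla_k Q_{j\ell p})\bar\nabla_p s$ vanish because $\bar\nabla Q$ is totally symmetric in its four indices. Finally, one rewrites the remaining second derivatives using $\tau_{kp} = \bar\nabla_k\bar\nabla_p s + \bar g_{kp}s - \tfrac12 Q_{kpq}\bar\nabla_q s$ (observing that the $\bar g_{kp}s$ contribution drops out by total symmetry of $Q$) to recognize the right-hand side of \eqref{a1:Codaz}.

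For the Simons identity \eqref{a1:simon}, the strategy is to apply $\bar\nabla_i$ to the Codazzi identity \eqref{a1:Codaz}, yielding an expression for $\bar\nabla_i\bar\nabla_j\tau_{k\ell}$ in terms of $\bar\nabla_i\bar\nabla_k\tau_{j\ell}$, plus $(\bar\nabla Q)*\tau$ and $Q*\bar\nabla\tau$ corrections. On $\bar\nabla_i\bar\nabla_k\tau_{j\ell}$, commute $\bar\nabla_i$ and $\bar\nabla_k$ via the Ricci identity on the $(0,2)$ tensor $\tau_{j\ell}$, which introduces two Riemann contractions with $\tau$; these are expanded using \eqref{s2:gauss-2} into the $\bar g*\tau$ and $Q*Q*\tau$ pieces appearing on the right-hand side. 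Then apply Codazzi \eqref{a1:Codaz} again to the inner $\bar\nabla_i\tau_{j\ell}$ (making use of the symmetry $\tau_{j\ell}=\tau_{\ell j}$ when helpful), and commute once more via Ricci, to land the expression at $\bar\nabla_k\bar\nabla_\ell\tau_{ij}$. The $\bar\nabla Q * \tau$ corrections generated at each Codazzi step accumulate into the last four terms of \eqref{a1:simon}.

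The main obstacle is the bookkeeping. Each commutation spawns two Riemann contractions, each of which expands via \eqref{s2:gauss-2} into four pieces, and each application of Codazzi to an intermediate $\bar\nabla\tau$ generates $Q$-coupled corrections. Matching the resulting $Q*Q*\tau$ combinations with the four explicit pairs listed in \eqref{a1:simon}, and the derivative terms with the four $\bar\nabla Q*\tau$ pairs, requires repeated use of total symmetry of $Q$ and $\bar\nabla Q$ (so that, e.g., $\bar\nabla_i Q_{k\ell p}$ may be reindexed as $\bar\nabla_p Q_{ik\ell}$) together with the symmetry of $\tau$. Once the symbolic identities are aligned, \eqref{a1:simon} follows by collecting like terms.
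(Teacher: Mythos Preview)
Your proposal is correct and follows essentially the same route as the paper's appendix: differentiate the definition of $\tau$ and antisymmetrize (using Ricci, the Gauss equation \eqref{s2:gauss-2}, and the total symmetry of $Q$ and $\bar\nabla Q$) for the Codazzi identity, then differentiate Codazzi, commute once via Ricci, and apply Codazzi again inside to reach $\bar\nabla_k\bar\nabla_\ell\tau_{ij}$ for the Simons identity. The only minor slip is that no second Ricci commutation is needed---after the inner Codazzi (using $\tau_{j\ell}=\tau_{\ell j}$ to swap $i$ with $\ell$) you arrive directly at $\bar\nabla_k\bar\nabla_\ell\tau_{ij}$; the remaining work is exactly the bookkeeping you describe, including one further use of Codazzi on the residual $Q*\bar\nabla\tau$ terms to cast them as $\tfrac12 Q_{ijp}\bar\nabla_p\tau_{k\ell}-\tfrac12 Q_{k\ell p}\bar\nabla_p\tau_{ij}$.
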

\begin{proof}
(i) Taking the covariant derivative of the equation \eqref{s6:tau-def}, we have
\begin{align}
  \bar{\nabla}_j\tau_{k \ell }=& \bar{\nabla}_j\bar{\nabla}_k\bar{\nabla}_\ell s +\bar{g}_{k \ell }\bar{\nabla}_js-\frac 12\bar{\nabla}_jQ_{k \ell p}\bar{\nabla}_ps-\frac 12Q_{k \ell p}\bar{\nabla}_j\bar{\nabla}_ps,\label{a1:pf-1}\\
  \bar{\nabla}_k\tau_{j \ell }=& \bar{\nabla}_k\bar{\nabla}_j\bar{\nabla}_\ell s +\bar{g}_{j \ell }\bar{\nabla}_ks-\frac 12\bar{\nabla}_kQ_{j \ell p}\bar{\nabla}_ps-\frac 12Q_{j \ell p}\bar{\nabla}_k\bar{\nabla}_ps.
\end{align}
Using the Ricci identity and \eqref{s2:gauss-2}, we have
\begin{align}
  &\bar{\nabla}_j\bar{\nabla}_k\bar{\nabla}_\ell s-\bar{\nabla}_k\bar{\nabla}_j\bar{\nabla}_\ell s=\bar{R}_{jk \ell p}\bar{\nabla}_ps\nonumber\\
   =&\bar{g}_{j \ell }\bar{\nabla}_ks-\bar{g}_{k \ell }\bar{\nabla}_js+\frac 14Q_{k \ell q}Q_{qjp}\bar{\nabla}_ps-\frac 14Q_{j \ell q}Q_{qkp}\bar{\nabla}_ps.\label{a1:pf-2}
\end{align}
Combining \eqref{a1:pf-1}--\eqref{a1:pf-2} and using the total symmetry of $Q_{ijk}$ and $\bar{\nabla}_iQ_{jk \ell }$, we obtain
\begin{align*}
  \bar{\nabla}_j\tau_{k \ell }-\bar{\nabla}_k\tau_{j \ell }=& \frac 12Q_{j \ell p}\bar{\nabla}_k\bar{\nabla}_ps-\frac 12Q_{k \ell p}\bar{\nabla}_j\bar{\nabla}_ps +\frac 14Q_{k \ell q}Q_{qjp}\bar{\nabla}_ps-\frac 14Q_{j \ell q}Q_{qkp}\bar{\nabla}_ps\\
  =&\frac 12Q_{j \ell p}\bar{\nabla}_k\bar{\nabla}_ps-\frac 12Q_{k \ell p}\bar{\nabla}_j\bar{\nabla}_ps +\frac 12Q_{k \ell q}\left(\bar{\nabla}_q\bar{\nabla}_js+s\bar{g}_{qj}-\tau_{qj}\right)\\
   &\quad -\frac 12Q_{j \ell q}\left(\bar{\nabla}_q\bar{\nabla}_ks+s\bar{g}_{qk}-\tau_{qk}\right)\\
   =&\frac 12Q_{j \ell p}\tau_{pk}-\frac 12Q_{k \ell p}\tau_{pj}.
\end{align*}

(ii) For the Simons type equation, we take the covariant derivative of the equation \eqref{a1:Codaz}:
\begin{align*}
 \bar{\nabla}_i \bar{\nabla}_j\tau_{k \ell }=&\bar{\nabla}_i\bar{\nabla}_k\tau_{j \ell }+\frac 12\bar{\nabla}_i\left(Q_{j \ell q}\tau_{kq}-Q_{k \ell p}\tau_{jp}\right)\\
 =&\bar{\nabla}_k\bar{\nabla}_i\tau_{j \ell }+\bar{R}_{ikjp}\tau_{p \ell }+\bar{R}_{ik \ell p}\tau_{pj}+\frac 12\bar{\nabla}_i\left(Q_{j \ell q}\tau_{kq}-Q_{k \ell p}\tau_{jp}\right)\displaybreak[0]\\
 =&\bar{\nabla}_k\bar{\nabla}_\ell \tau_{ji}+\frac 12\bar{\nabla}_k\left(Q_{jiq}\tau_{ \ell q}-Q_{j \ell p}\tau_{ip}\right)+\bar{g}_{ij}\tau_{k \ell }-\bar{g}_{kj}\tau_{i \ell }+\bar{g}_{i \ell }\tau_{kj}-\bar{g}_{k \ell }\tau_{ij}\\
& +\frac 14\left(Q_{kjq}Q_{qip}-Q_{kpq}Q_{qij}\right)\tau_{p \ell }+\frac 14\left(Q_{k \ell q}Q_{qip}-Q_{kpq}Q_{qi \ell }\right)\tau_{pj}\\
&+\frac 12\bar{\nabla}_i\left(Q_{j \ell q}\tau_{kq}-Q_{k \ell p}\tau_{jp}\right)\displaybreak[0]\\
 =&\bar{\nabla}_k\bar{\nabla}_\ell \tau_{ji}+\bar{g}_{ij}\tau_{k \ell }-\bar{g}_{kj}\tau_{i \ell }+\bar{g}_{i \ell }\tau_{kj}-\bar{g}_{k \ell }\tau_{ij} +\frac 14\left(Q_{kjq}Q_{qip}-Q_{kpq}Q_{qij}\right)\tau_{p \ell }\\
&+\frac 14\left(Q_{k \ell q}Q_{qip}-Q_{kpq}Q_{qi \ell }\right)\tau_{pj}+\frac 12\bar{\nabla}_kQ_{jiq}\tau_{ \ell q}-\frac 12\bar{\nabla}_kQ_{j \ell p}\tau_{ip}\displaybreak[0]\\
&+\frac 12\bar{\nabla}_iQ_{j \ell q}\tau_{kq}-\frac 12\bar{\nabla}_iQ_{k \ell p}\tau_{jp}+\frac 12Q_{j \ell p}\left(\bar{\nabla}_i\tau_{kp}-\bar{\nabla}_k\tau_{ip}\right)\\
&+\frac 12Q_{jiq}\bar{\nabla}_k\tau_{ \ell q}-\frac 12Q_{k \ell p}\bar{\nabla}_i\tau_{jp}\displaybreak[0]\\
 =&\bar{\nabla}_k\bar{\nabla}_\ell \tau_{ji}+\bar{g}_{ij}\tau_{k \ell }-\bar{g}_{kj}\tau_{i \ell }+\bar{g}_{i \ell }\tau_{kj}-\bar{g}_{k \ell }\tau_{ij} +\frac 14\left(Q_{kjq}Q_{qip}-Q_{kpq}Q_{qij}\right)\tau_{p \ell }\\
&+\frac 14\left(Q_{k \ell q}Q_{qip}-Q_{kpq}Q_{qi \ell }\right)\tau_{pj}+\frac 12\bar{\nabla}_kQ_{jiq}\tau_{ \ell q}-\frac 12\bar{\nabla}_kQ_{j \ell p}\tau_{ip}\displaybreak[0]\\
&+\frac 12\bar{\nabla}_iQ_{j \ell q}\tau_{kq}-\frac 12\bar{\nabla}_iQ_{k \ell p}\tau_{jp}+\frac 14Q_{j \ell p}\left(Q_{ipq}\tau_{kq}-Q_{kpq}\tau_{iq}\right)\\
&+\frac 12Q_{jiq}\bar{\nabla}_q\tau_{k \ell }-\frac 12Q_{k \ell p}\bar{\nabla}_p\tau_{ij}+\frac 14Q_{jiq}\left(Q_{k \ell p}\tau_{pq}-Q_{ \ell qp}\tau_{kp}\right)\displaybreak[0]\\
&\quad -\frac 14Q_{k \ell p}\left(Q_{ijq}\tau_{pq}-Q_{jpq}\tau_{iq}\right).
\end{align*}
Using the total symmetry of $Q_{ijk}$ and $\bar{\nabla}_iQ_{jk \ell }$ and rearranging the terms gives the formula~\eqref{a1:simon}.
\end{proof}

\bibliographystyle{Plain}

\end{document}